\DeclareMathOperator{\diam}{diam}
\theoremstyle{plain}
\newtheorem{theorem}{Theorem}
\newtheorem{corollary}[theorem]{Corollary}
\newtheorem{claim}[theorem]{Claim}
\newtheorem{lemma}[theorem]{Lemma}
\newtheorem{proposition}[theorem]{Proposition}
\newtheorem{conjecture}[theorem]{Conjecture}
\theoremstyle{definition}
\newtheorem{definition}[theorem]{Definition}
\newtheorem{remark}[theorem]{Remark}
\newtheorem{question}[theorem]{Question}
\numberwithin{equation}{section}
\numberwithin{theorem}{section}
\newcommand{\Hdim}{\text{dim}_H}
\newcommand{\Sdim}{\text{dim}_s}
\newcommand{\RR}{\mathbb{R}}
\newcommand{\HH}{\mathcal{H}}
\newcommand{\ZZ}{\mathbb{Z}}
\newcommand{\N}{\mathbb{N}}
\newcommand{\cF}{\mathcal{F}}
\newcommand{\cC}{\mathcal{C}}
\newcommand{\R}{\mathbb{R}}
\newcommand{\Lip}{\textnormal{Lip}}
\newcommand{\QS}{\textnormal{QS}}
\title{Analytically one-dimensional planes and the Combinatorial Loewner Property}
\author{Guy C. David}
\email{gcdavid@bsu.edu}
\address{Department of Mathematical Sciences, Ball State University, Muncie, IN 47306}
\author{Sylvester Eriksson-Bique}
\email{sylvester.d.eriksson-bique@jyu.fi}
\address{Department of Mathematics and Statistics
P.O. Box 35
FI-40014 University of Jyväskylä}
\thanks{The first author was partially supported by National Science Foundation grant DMS-2054004. The second author was supported by the Research Council of Finland grants 354241 and 356861. One of the crucial auxiliary results in this paper, Theorem \ref{thm:LQ}, arose from earlier unpublished work of the first author and Jeff Cheeger. We thank Cheeger for these discussions and for the permission to include the result in the present paper. We also thank Riku Anttila, Mario Bonk and Mathav Murugan for helpful discussions.}
\subjclass[2020]{30L10, 20F65, 51F99, 53C23, 28A78}
\date{\today}
\begin{document}

\begin{abstract}
It is a major problem in analysis on metric spaces to understand when a metric space is quasisymmetric to a space with strong analytic structure, a so-called Loewner space. A conjecture of Kleiner, recently disproven by Anttila and the second author, proposes a combinatorial sufficient condition. The counterexamples constructed are all topologically one dimensional, and the sufficiency of Kleiner's condition remains open for most other examples.

A separate question of Kleiner and Schioppa, apparently unrelated to the problem above, asks about the existence of ``analytically $1$-dimensional planes'': metric measure spaces quasisymmetric to the Euclidean plane but supporting a $1$-dimensional analytic structure in the sense of Cheeger.

In this paper, we construct an example for which the conclusion of Kleiner's conjecture is not known to hold. We show that \emph{either} this conclusion fails in our example \emph{or} there exists an ``analytically $1$-dimensional plane''. Thus, our construction either yields a new counterexample to Kleiner's conjecture, different in kind from those of Anttila and the second author, or a resolution to the problem of Kleiner--Schioppa.
\end{abstract}

\maketitle

\section{Introduction}

\subsection{The Combinatorial Loewner Property and Kleiner's conjecture}

Quasiconformal and quasisymmetric mappings are generalizations of classical conformal mappings to the setting of non-smooth metric spaces. Roughly speaking, quasisymmetric mappings are those that preserve the ``shapes'' of objects in a space, while possibly distorting their sizes. They are fundamental objects of study in analysis on metric spaces, complex analysis, and geometric group theory.

A basic question in these situations is: Given a metric space, is it quasisymmetrically equivalent to a space with good analytic structure? This is, in some sense, an optimization problem within the ``conformal'' class of the given space.

The good analytic structure we search for is the so-called \emph{Loewner condition} of Heinonen--Koskela \cite{HK}. This is a property that guarantees the existence of many rectifiable curves connecting pairs of continua in the space. Much work over the past thirty years has shown that this property further implies a very strong theory of calculus for quasiconformal and Lipschitz mappings; see \cite{HK, Ch, HajlaszKoskela, BateLi, HKST} for a sample.

Furthermore, in certain contexts, even just knowing the existence of a Loewner metric in the  quasisymmetry class of a given space has strong analytic consequences. For instance, Bonk and Kleiner \cite{BK04} show that the existence of such a metric in the case of certain boundaries is equivalent to Cannon's conjecture concerning group actions on hyperbolic $3$-space. This is also connected to long-standing questions about ``conformal dimension''. We refer the reader to \cite{KleinerICM, BonkICM, MT,CS, AEBa} for related discussion.

Thus, it has been a major problem to understand when a given metric space is quasisymmetrically equivalent to a Loewner space. To this end, Kleiner \cite{KleinerICM} (see also \cite{BourK}) introduced the \emph{Combinatorial Loewner Property}. Unlike the Loewner property, the Combinatorial Loewner Property is defined by looking at discrete approximations of the space, and is a quasisymmetric invariant. Kleiner proposed, under a weak self-similarity assumption known as ``approximate self-similarity'' that is satisfied by boundaries of Gromov hyperbolic groups, that the Combinatorial Loewner Property may be sufficient to find a Loewner metric in the quasisymmetry class of a space:

\begin{conjecture}[Kleiner \cite{KleinerICM}, Bourdon--Kleiner \cite{BourK}]\label{conj:Kleiner}
Suppose $Z$ is an approximately self-similar metric space. If $Z$ satisfies the Combinatorial Loewner Property, then $Z$ is quasisymmetric to an Ahlfors regular Loewner space.
\end{conjecture}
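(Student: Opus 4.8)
The plan is to try to upgrade the purely combinatorial information supplied by the Combinatorial Loewner Property into a genuine metric with an abundance of rectifiable curves. Since this is a well-known open problem, what follows is the most plausible line of attack rather than a complete argument. The starting point is the known easy direction (due to Bonk--Kleiner and Kleiner): if a space carries an Ahlfors $Q$-regular Loewner metric, then it satisfies the Combinatorial Loewner Property with exponent $Q$. Consequently the target exponent is forced to be the Ahlfors regular conformal dimension $Q = \cdim(Z)$, and the first step is to record, under approximate self-similarity, that the combinatorial modulus data actually identifies this critical exponent and gives two-sided bounds on combinatorial $Q$-modulus of curve families, in the spirit of the work of Carrasco Piaggio and Bourdon--Kleiner.

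Second, I would build a candidate metric by a discrete uniformization procedure. For each scale $2^{-k}$ let $G_k$ be a combinatorial approximation of $Z$, and use the combinatorial $Q$-modulus of curve families in $G_k$ to assign ``conformal weights'' to the vertices of $G_k$, in the spirit of Cannon's combinatorial Riemann mapping theorem and of the constructions of Keith--Laakso. These weights induce metrics $\rho_k$ on $Z$ that are comparable to the original metric at large scales; approximate self-similarity should supply the uniform control needed to pass to a limiting metric $\rho_\infty$ (via a Gromov--Hausdorff or bi-Lipschitz-after-rescaling limit). The two-sided combinatorial modulus bounds from the Combinatorial Loewner Property should then force $\rho_\infty$ to be Ahlfors $Q$-regular and quasisymmetrically equivalent to the original metric on $Z$.

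Third, the heart of the matter is to verify that $\rho_\infty$ is Loewner, equivalently (by Heinonen--Koskela together with the characterizations of Keith--Laakso) that it supports a $(1,Q)$-Poincar\'e inequality. The lower bound on combinatorial modulus provided by the Combinatorial Loewner Property should translate, in the limit, into the existence of many curves of controlled length joining any pair of disjoint continua; making this precise --- passing from ``combinatorial curves'' (chains of vertices in $G_k$) at each finite scale to honest rectifiable curves in $\rho_\infty$, with uniformly bounded total length and a uniform lower bound on their modulus --- is the main obstacle, and is exactly where all current methods stall outside special symmetric cases (e.g.\ Sierpi\'nski carpets and gaskets with sufficient symmetry, or boundaries linked to Cannon's conjecture). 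A successful argument here would presumably require a compactness/diagonalization scheme for curve families across scales, a semicontinuity statement for combinatorial modulus under the scaling limit, and a matching upper modulus bound to rule out collapse. Granting that step, the Poincar\'e inequality together with Ahlfors $Q$-regularity and the quasisymmetry $\rho_\infty \sim d_Z$ yields the desired Loewner space in the quasisymmetry class of $Z$.

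Given how far out of reach a full proof currently is, I expect that the realistic contribution here --- and, I anticipate, the content of what follows in this paper --- is not a proof of Conjecture~\ref{conj:Kleiner} in general but rather a test case: a concrete approximately self-similar space, conjecturally satisfying the Combinatorial Loewner Property, for which the above program either goes through (producing a Loewner metric with unexpected analytic dimension) or provably fails, thereby furnishing a counterexample.
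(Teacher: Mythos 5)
You have correctly recognized that the statement is a \emph{conjecture}, not a theorem, and the paper neither proves it nor claims to. In fact the paper records in the introduction that Conjecture~\ref{conj:Kleiner} is now known to be \textbf{false} in general: Anttila and Eriksson-Bique constructed topologically one-dimensional counterexamples in \cite{AEBa}. So any attempt at a full proof --- including the discrete-uniformization program you sketch (combinatorial modulus weights in the spirit of Cannon, Keith--Laakso, Carrasco Piaggio, Bourdon--Kleiner, then passing to a limit metric and trying to extract a Poincar\'e inequality) --- is doomed at the level of generality stated. Your program is a fair summary of the known approaches and of exactly where they stall (converting combinatorial curve chains into rectifiable curves with controlled modulus), but the honest mathematical status here is not ``out of reach'' so much as ``refuted.''

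Your closing paragraph anticipates the paper's actual contribution quite accurately. The authors do not attempt to prove the conjecture for the pillow space either; instead they prove a dichotomy. They show (Theorem~\ref{thm:main} and Corollary~\ref{cor:1dimplane}) that \emph{if} the pillow space $(X,d_0)$ --- which they verify to be approximately self-similar and Combinatorially Loewner via \cite{AEBb} --- were quasisymmetric to an Ahlfors $Q$-regular $Q$-Loewner space, then after symmetrization, blowup, and pushing forward through the sheet projection $\Pi$, one obtains a PI space quasisymmetric to the Euclidean plane with analytic dimension $1$, answering the Kleiner--Schioppa question. Thus either the pillow space is a new (two-dimensional) counterexample to the conjecture, or an analytically one-dimensional plane exists. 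The key technical ingredients are the symmetrization argument (Proposition~\ref{prop:SL}), the Lipschitz-quotient-preserves-PI theorem (Theorem~\ref{thm:LQ}), and the rectifiability dichotomy of David--Kleiner (Theorem~\ref{thm:davidkleiner}) --- none of which appear in your proposal, but then again your proposal was not trying to prove what the paper proves.
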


However, very recently, Anttila and the second author \cite{AEBa} have shown that Conjecture \ref{conj:Kleiner} \textbf{fails} in general by constructing certain (topologically) one-dimensional counterexamples. We note that it is still an open question whether even the classical Sierpi\'nski carpet is quasisymmetric to a Loewner space, though interesting examples of planar carpets for which the conclusion of Conjecture \ref{conj:Kleiner} does hold can be found in \cite{CS}.

The techniques of \cite{AEBa} appear to be restricted to the one-dimensional setting, and (as noted above) it is of substantial interest to see how Conjecture \ref{conj:Kleiner} fares in higher-dimensional settings.   Additional background and motivation for Conjecture \ref{conj:Kleiner} can also be found in \cite{AEBa}.

In this paper, we study a specific two-dimensional Combinatorially Loewner space where the techniques of \cite{AEBa} do not apply and for which we do not know if the conclusion of Conjecture \ref{conj:Kleiner} holds.  We show that if Conjecture \ref{conj:Kleiner} holds for this particular example, then this would resolve another well-known problem in analysis on metric spaces. We now describe that problem. 

\subsection{Differentiability on metric measure spaces and the question of ``analytically one-dimensional planes''}

Fundamental work of Cheeger \cite{Ch} provides a type of differentiable structure for certain non-smooth metric measure spaces, the so-called \emph{PI (Poincar\'e Inequality) spaces}. On such spaces, Lipschitz functions are ``differentiable'' almost everywhere, i.e., a version of Rademacher's theorem holds, and a remarkable amount of first-order calculus can be done in these non-smooth settings. (More detailed definitions are in Section \ref{sec:prelims}.)

The differentiable structure on a PI space comes equipped with a notion of dimension, the \emph{analytic dimension} of the space, which should be thought of as the dimension of the cotangent bundle associated to the differentiable structure. On the standard $\RR^n$, the analytic dimension is $n$, but on other spaces the analytic dimension differs from the Hausdorff and topological dimensions, and is more subtly related to the geometry of the space.

A basic question concerns what happens when the underlying metric space is a topological manifold; since the questions are infinitesimal it makes sense to impose this condition on the Gromov--Hausdorff blowups of the space. The following is the simplest open question in this regime, which has resisted solution for nearly a decade. 

\begin{question}[Kleiner--Schioppa \cite{KleinerSchioppa}]\label{q:1dimplane}
Suppose $(X,d,\mu)$ is a PI space. Assume that all Gromov--Hausdorff limits of pointed rescalings of $X$ are homeomorphic to $\mathbb{R}^2$. Can the analytic dimension of the space be $1$?
\end{question}
The terminology here is explained more carefully in Section \ref{sec:prelims}. We remark that if $(X,d,\mu)$ is a PI space that is quasisymmetric to the plane $\RR^2$, then it will satisfy the assumptions of Question \ref{q:1dimplane}, and to our knowledge the question of whether such a space can have analytic dimension $1$ is open even in this setting. We colloquially refer to hypothetical examples that might positively answer Question \ref{q:1dimplane} as ``analytically $1$-dimensional planes'', and refer to the introduction of \cite{DavidKleiner} for more background on this and closely related questions.

\subsection{New results}
Our main result concerns a specific metric space $(X,d_0)$, which we call the ``pillow space''. We give an informal description of the space here, and a more careful one in subsection \ref{subsec:pillowspace}.

Let $X_0$ be the unit square in $\RR^2$. Divide it into nine congruent sub-squares. Now add in another copy of the middle square that is glued along its boundary to the original middle square. This is the first stage $X_1$, a ``pillow'' built of ten $\frac13\times\frac13$ squares.

Now repeat this construction at a smaller scale on each of those ten small squares: subdivide each of them into nine congruent pieces, and double each middle square along the boundary. This yields $X_2$.

Repeat this process indefinitely, obtaining a sequence of spaces $X_n$. Equip each $X_n$ with the geodesic metric for which each of its $10^n$ $3^{-n}\times 3^{-n}$ squares is isometric to its Euclidean counterpart.  There are then $1$-Lipschitz transition maps $X_{n+1}\rightarrow X_n$ given by ``collapsing'' the pillows of scale $n+1$, and so we may take an inverse limit to obtain a self-similar geodesic metric space $(X, d_0)$. (We reiterate that this is only an informal description, and a precise definition is in Definition \ref{def:pillowspace} below.) 

The link to Conjecture \ref{conj:Kleiner} is via the following result of Anttila and the second author \cite[Theorem 1.1, Proposition 1.7, Example 4.3]{AEBb}. 
\begin{theorem}[Anttila--Eriksson-Bique]\label{thm:AEB}
$(X,d_0)$ is approximately self-similar and has the Combinatorial Loewner Property.
\end{theorem}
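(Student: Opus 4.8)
The plan is to establish the two conclusions separately: \emph{approximate self-similarity}, which follows essentially by inspection from the iterated construction of $(X,d_0)$, and the \emph{Combinatorial Loewner Property} (CLP), which requires the combinatorial modulus machinery of Kleiner and Bourdon--Kleiner together with a self-similar recursion to pass from a single scale to all scales.

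\smallskip
\noindent\textbf{Approximate self-similarity.} Here I would argue as follows. By construction each $X_n$ is a geodesic space tiled by $10^n$ isometric copies of the Euclidean $3^{-n}\times 3^{-n}$ square, and $X$ is their inverse limit. The key finiteness observation is that, up to isometry, a $3^{-n}$-cell together with its metric neighbors has only \emph{finitely many} possible configurations: the cell may lie in the interior, along an edge, or at a corner of a coarser cell; or it may be one of the two ``folded'' cells of a pillow of some scale; or it may sit at a pillow tip. For each configuration $T$, the $3^n$-rescaled union of the cell with its neighbors is isometric to one of finitely many fixed model regions, and each model region admits an isometric --- or, near the pillows, uniformly bilipschitz --- embedding into $X$ itself. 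Given a ball $B(x,r)$ with $r<\diam X$, pick $n$ with $3^{-n}\le r<3^{-n+1}$; then $B(x,r)$ lies in a union of boundedly many $3^{-n}$-cells, so composing the rescaling $d_0\mapsto d_0/r$ with the relevant model embedding yields an $L$-bilipschitz map of $(B(x,r),d_0/r)$ onto an open subset of $X$ containing a ball of definite radius about the image of $x$, with $L$ depending only on the finite list of models. This is exactly approximate self-similarity.

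\smallskip
\noindent\textbf{Combinatorial Loewner Property.} First I would record the metric prerequisites, all immediate from the construction: $(X,d_0)$ is complete, geodesic, doubling, linearly locally connected (the flat cells furnish interior connectivity and the pillows pose no obstruction), and Ahlfors $Q$-regular with $Q=\log_3 10$ for the natural self-similar measure (count cells). By the Bourdon--Kleiner characterization of the CLP for approximately self-similar, Ahlfors $Q$-regular, linearly locally connected spaces, it then suffices to prove two uniform estimates for the combinatorial $Q$-modulus on the level-$n$ incidence graphs $G_n$: an \emph{upper bound} $\mathrm{Mod}_Q(B,X\setminus 2B;G_n)\lesssim 1$, and a \emph{Loewner-type lower bound} $\mathrm{Mod}_Q(\Gamma(E,F);G_n)\ge \phi(\Delta(E,F))$ for all pairs of disjoint continua $E,F$, where $\Delta$ denotes the relative distance and $\phi>0$ is a fixed decreasing function independent of $n$. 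The upper bound follows from the self-similar measure together with a combinatorial Fubini/covering argument, as in the planar case. For the lower bound --- the crux --- one uses the self-similar replacement rule ``cell $\mapsto$ nine subcells plus one doubled subcell'': one establishes a submultiplicative-type inequality relating $\mathrm{Mod}_Q(\,\cdot\,;G_{n+1})$ to $\mathrm{Mod}_Q(\,\cdot\,;G_n)$ under this rule, verifies a positive base-case estimate at unit scale by a direct finite computation that exploits the Euclidean structure of the cells and the redundancy of curves created by the doubled middle cell, and then iterates to obtain a uniform-in-$n$ lower bound. Combining the two estimates with the Bourdon--Kleiner criterion yields the CLP.

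\smallskip
The step I expect to be the main obstacle is the Loewner-type lower bound, and specifically showing that it is not destroyed by passing through the pillow folds: one must check that the three sheets meeting along a fold circle can be joined by combinatorial curve families of controlled modulus --- so that a ``fan'' of curves routed partly through flat cells and partly across pillows is admissible --- and that the constants obtained at the finitely many model configurations do not degrade when composed across arbitrarily many scales via the replacement rule. Everything else (approximate self-similarity, the metric prerequisites, and the upper modulus bound) should be routine by comparison.
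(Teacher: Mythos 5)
Your proposal takes a genuinely different route from the paper. The paper does \emph{not} prove approximate self-similarity or the Combinatorial Loewner Property from first principles: it identifies the pillow space (up to bi-Lipschitz equivalence) with Example~4.3 of \cite{AEBb}, by matching the dual graphs of the cubical complexes $X_n$ with the replacement graphs $G_n$ constructed there from an iterated graph system (vertex set $\cC_{X_1}$, horizontal and vertical edge types, explicit gluing rules), and then simply cites \cite[Theorem~1.1 and Proposition~1.7]{AEBb}. The entire content of the paper's proof is this translation; the hard analysis is outsourced. Your proposal, by contrast, sketches a direct proof of the CLP via the Bourdon--Kleiner criterion: local-finiteness of configurations for approximate self-similarity, an upper $Q$-modulus bound, and a Loewner-type lower bound via a submultiplicative recursion with a positive base case.

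Your high-level plan is the right shape (it is, in essence, the strategy implemented in \cite{AEBb} for general cubical replacement rules), and you correctly single out the lower modulus bound across the pillow folds as the crux. But as written it is a program rather than a proof: the ``submultiplicative-type inequality'' and the ``direct finite computation'' at unit scale are asserted, not established, and these are exactly where the real work lives. You also do not name the geometric mechanism that makes the lower bound go through, which the paper highlights explicitly: the dihedral symmetry group of the replacement rule, which forces a homogeneity property of discrete moduli across cells at a fixed scale. Without invoking that symmetry (or an equivalent), it is not clear how to get modulus lower bounds that are uniform over all pairs $(E,F)$ of continua at all scales, and in particular how to control curve families crossing the fold circles. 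Finally, note a small metric subtlety the paper flags: the graph metrics $3^{-n}d_{G_n}$ converge only to a space bi-Lipschitz to $(X,d_0)$ (an $\ell_1$-versus-$\ell_2$ discrepancy on sheets), so one must also check that both approximate self-similarity and the CLP are bi-Lipschitz invariants --- a point your sketch elides. In short: the paper's approach buys the theorem for the cost of a careful identification with \cite{AEBb}; your approach would require essentially re-deriving the main results of that paper.
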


The main result of the present paper is the following. (Definitions of the terms involved can be found in Section \ref{sec:prelims}.)

\begin{theorem}\label{thm:main}
Suppose that the pillow space $(X,d_0)$ is quasisymmetric to an Ahlfors $Q$-regular $Q$-Loewner space. Then there exists a metric $\rho$ and a measure $\mu$ on the unit square $[0,1]^2$ such that
\begin{enumerate}[(i)]
\item $([0,1]^2,\rho)$ is quasisymmetric to the standard unit square $([0,1]^2,|\cdot|)$.
\item $([0,1]^2,\rho,\mu)$ is a PI space.
\item $([0,1]^2,\rho,\mu)$ has analytic dimension $1$.
\end{enumerate}
\end{theorem}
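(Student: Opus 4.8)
The plan is to start from the hypothesized quasisymmetry $f\colon(X,d_0)\to(Y,d_Y)$, where $(Y,d_Y,\mathcal H^Q)$ is Ahlfors $Q$-regular and $Q$-Loewner, hence (by Heinonen--Koskela) a PI space. The key structural feature of the pillow space is that it comes with a natural ``collapse'' map $\pi\colon X\to[0,1]^2$ obtained as the inverse limit of the $1$-Lipschitz transition maps that forget the doubled middle squares; this $\pi$ is a surjection which is injective off a countable union of ``pillow'' pieces, and it is $1$-Lipschitz from $d_0$ to the Euclidean metric. The idea is to push the Loewner metric $d_Y$ down through $\pi$: define $\rho$ on $[0,1]^2$ by a quotient/infimal-length construction that identifies, for each pillow, its two isometric copies in $(Y,d_Y)$. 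First I would verify that this $\rho$ is a genuine metric inducing the Euclidean topology on $[0,1]^2$ and that the resulting quotient map $q\colon(Y,d_Y)\to([0,1]^2,\rho)$ is Lipschitz and ``almost isometric'' away from the pillows. Then I would set $\mu=q_*(\mathcal H^Q)$.

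For part (i): the composition $q\circ f\circ\pi^{-1}$ (suitably interpreted on the complement of the pillows, then extended) should be a quasisymmetry $([0,1]^2,\rho)\to([0,1]^2,|\cdot|)$. The cleanest route is probably to show directly that $([0,1]^2,\rho)$ is linearly locally connected, Ahlfors $2$-regular after a conformal change (or that it satisfies the Bonk--Kleiner characterization of quasispheres/quasiplanes), using that $(Y,d_Y)$ is a quasiplane and that the pillow-collapse only changes distances in a controlled bi-Lipschitz-on-pieces way. Alternatively, and perhaps more robustly, one argues that $\rho$ and the Euclidean metric have the same family of quasiconformal deformations because collapsing a pillow is a ``conformal welding'' type operation that does not affect moduli of curve families in $[0,1]^2$ at all (the doubled squares carry a positive-measure set, but curves through the identified seam can be rerouted). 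For part (ii): PI-ness should descend under the quotient map $q$ because $q$ is a surjective Lipschitz map that is measure-preserving up to bounded multiplicity, pillows are ``thin'' enough (each is a single square, and the seam where identifications happen is the square's boundary) that Poincaré inequalities survive — one can transport test functions and upper gradients back to $(Y,d_Y)$, apply the Poincaré inequality there, and push forward, controlling the error from the collapsed pillows by their (summable) contribution to the measure.

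The crux — part (iii) — is showing the analytic dimension of $([0,1]^2,\rho,\mu)$ is exactly $1$, not $2$. The heuristic is that in the pillow space, ``half'' the space (all the doubled middle squares, which form a positive-measure set distributed at all scales) is collapsed by $\pi$, so any differentiable chart must be degenerate in one direction on a blowup: at $\mu$-a.e.\ point, the infinitesimal geometry of $([0,1]^2,\rho)$ looks like a space where one Euclidean direction has been ``folded over'' infinitely often, forcing Lipschitz functions to have one-dimensional differentials. Concretely I would (a) identify the blowups (tangent cones) of $([0,1]^2,\rho,\mu)$ at generic points as the corresponding blowups of $(Y,d_Y)$ under $q$ — these should be something like a ``weighted line'' or an $\mathbb R$-tree-like / Laakso-type space crossed with nothing, in any case a PI space of analytic dimension $1$ — and (b) invoke the fact (Cheeger, Schioppa, Bate) that analytic dimension is determined by, and lower semicontinuous under, blowups, together with an upper bound coming from an explicit single chart: the Euclidean coordinate that is *not* collapsed (say the ``horizontal'' coordinate along each pillow's seam) should serve as a global chart function whose differential spans the whole cotangent bundle $\mu$-a.e.

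The main obstacle I anticipate is precisely this last point: producing the right chart and proving it is maximal, i.e.\ that *every* Lipschitz function on $([0,1]^2,\rho,\mu)$ is differentiable with respect to this single coordinate. This amounts to showing that the ``folding'' in the collapsed direction is severe enough to kill all non-trivial directional derivatives there — equivalently, that $\mu$-a.e.\ point has no ``generalized linear functional'' in the collapsed direction, which one typically verifies via an Alberti-representation / curve-fragment analysis: show that $\mu$ can be written using curve fragments that all run (asymptotically) in the seam direction, and that no Alberti representation exists in the transverse direction because every transverse curve must repeatedly cross a collapsed pillow and hence cannot have positive ``speed'' in $(Y,d_Y)$ after pulling back. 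Making the self-similar bookkeeping of ``which direction is collapsed at which scale'' precise — since the subdivision into nine and the choice of middle square means the collapsed direction is genuinely scale-dependent and location-dependent — is where the real work lies, and it will likely require the approximate self-similarity from Theorem \ref{thm:AEB} to reduce to finitely many local models.
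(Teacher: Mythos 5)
Your proposal identifies some correct ingredients (pushing the Loewner/PI structure down through a collapse map, using a Lipschitz-quotient descent theorem for the PI property, controlling the analytic dimension via blowups), but it misses the single most important structural idea of the paper and takes a route in step (iii) that, as you yourself flag, is not going to close.

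The missing idea is \emph{symmetrization}. Your plan is to take the hypothesized Loewner metric $d_Y$ on $X$ and define $\rho$ on $[0,1]^2$ by a ``quotient/infimal-length'' construction that glues the two copies of each pillow. The problem is that without further work, the two copies of a pillow are \emph{not} isometric in $d_Y$, so there is no canonical way to collapse them and no reason the resulting pseudometric is well-controlled (let alone quasisymmetric to the Euclidean square or Ahlfors regular). The paper resolves this by first proving Proposition \ref{prop:SL}: the Loewner metric can be replaced, within the same quasisymmetry class, by a \emph{symmetric} Loewner metric $d_{SL}$ fixed by the group $\ZZ_2^{\N}$ of pillow-flips. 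Once the metric is symmetric, all sheets of $X$ are isometric to each other, and $\rho$ is defined simply by restricting $d_{SL}$ to one sheet and transporting it to $[0,1]^2$ via $\Pi$ (no quotient/infimal-length construction is needed). Symmetry is also exactly what makes $\Pi$ a Lipschitz \emph{quotient} map onto $([0,1]^2,\rho)$ (Lemma \ref{lem:sheetmetric}), after which the PI property for $\mu=\Pi_*\mathcal{H}^Q$ follows cleanly from Theorem \ref{thm:LQ} — this is the precise form of the ``PI descends under nice surjections'' heuristic you gesture at. Your infimal-length gluing, by contrast, would not obviously produce a Lipschitz quotient map, and the claim that ``collapsing a pillow does not affect moduli of curve families in $[0,1]^2$ at all'' is not justified and is in tension with the whole point of the construction.

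For part (iii), your plan — find an ``uncollapsed coordinate'' that serves as a maximal chart and rule out transverse Alberti representations — is, as you anticipate, where the argument would stall: because the subdivision geometry makes the ``collapsed direction'' depend on scale and location, there is no single global uncollapsed direction, and the blowups of $([0,1]^2,\rho)$ are all quasisymmetric to the plane (so they are not $1$-dimensional objects like trees). The paper instead argues by contradiction. Theorem \ref{thm:davidkleiner} (David--Kleiner plus Bate) gives that the analytic dimension is $1$ or $2$, and that a $2$-dimensional chart forces $\mu$ to be $2$-rectifiable. Kirchheim's lemma then produces a compact $E\subset\RR^2$ mapping bi-Lipschitzly onto a positive-$\mu$-measure subset of $([0,1]^2,\rho)$. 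Lemma \ref{lem:unrect} rules this out: starting from such a set one runs a blowup argument (Claims \ref{claim:sheetbilip}, \ref{claim:sheetbilip2}, \ref{claim:pillowbilip}) to upgrade a density-point bi-Lipschitz patch to a global bi-Lipschitz equivalence between a symmetric Loewner blowup metric and $d_0$ on all of $X$, contradicting Lemma \ref{lem:notloewner} (that $(X,d_0)$ is \emph{not} Loewner, because pushing $\mathcal H^p$ forward to a coordinate axis gives a measure singular to Lebesgue). So the one-dimensionality is never established by exhibiting a chart; it is established by showing two-dimensionality would force $(X,d_0)$ itself to be Loewner. You should incorporate the symmetrization step before anything else, and replace the direct chart construction in (iii) with this rectifiability-plus-blowup contradiction.
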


By an additional argument, we can prove a similar result about planes rather than squares, more directly addressing Question \ref{q:1dimplane}. As remarked above, standard compactness results for quasisymmetric mappings imply that a PI space that is quasisymmetric to the standard plane will satisfy the assumption of Question \ref{q:1dimplane}.

\begin{corollary}\label{cor:1dimplane}
Suppose that the pillow space $(X,d_0)$ is quasisymmetric to an Ahlfors $Q$-regular $Q$-Loewner space. Then there is a PI space that is quasisymmetric to the Euclidean plane $\mathbb{R}^2$ and has analytic dimension $1$. Moreover, no Gromov--Hausdorff blowup of this PI space is bi-Lipschitz equivalent to $\RR^2$.
\end{corollary}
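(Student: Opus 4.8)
The corollary needs two things: first, upgrade the PI-space-on-a-square from Theorem \ref{thm:main} to a PI space quasisymmetric to the whole plane $\mathbb{R}^2$ (still with analytic dimension $1$); second, show that no Gromov–Hausdorff blowup of this new space is bi-Lipschitz to $\mathbb{R}^2$.

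For the first part, the natural approach is to tile the plane by copies of the square produced in Theorem \ref{thm:main} and glue them isometrically along edges, keeping track of the metric and measure. More precisely, take $([0,1]^2, \rho, \mu)$ from Theorem \ref{thm:main}; for each $z \in \mathbb{Z}^2$ place a scaled/translated copy of $([0,1]^2,\rho)$ on the unit cell $z + [0,1]^2$ and glue adjacent cells along shared edges. One must check that these copies match up isometrically on the shared edges — this should follow from the self-similarity of the pillow space and a suitable compatibility of $\rho$ restricted to the boundary of the square (e.g. the boundary curve is quasisymmetrically, even bi-Lipschitzly, a standard segment at the relevant scale); alternatively, one can reflect across edges to force agreement, which is cleaner. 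Call the glued space $(\mathbb{R}^2, \hat\rho, \hat\mu)$. That this is quasisymmetric to the Euclidean plane follows from a standard gluing/patching lemma for quasisymmetries (the map is quasisymmetric on each cell with uniform constants, agrees on overlaps, and the cells form a locally finite bounded-geometry cover, so the global map is quasisymmetric; one should cite the relevant quasisymmetry-gluing result, e.g. along the lines of the gluing theorems used for PI spaces). That it is a PI space with analytic dimension $1$ follows because the PI inequality and the differentiable structure are local properties: a locally finite gluing of PI spaces with uniform data along nice boundaries is again a PI space — here the standard reference is the gluing theorem for Poincaré inequalities (Heinonen–Koskela–Shanmugalingam–Tyson type arguments, or the more recent gluing results), and the analytic dimension is determined locally, hence is still $1$ on a positive-measure (indeed full-measure) set, hence equals $1$.

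For the second part — no GH blowup is bi-Lipschitz to $\mathbb{R}^2$ — the point is that analytic dimension is preserved under taking tangents: every pointed GH blowup of a PI space is again a PI space, and at $\mu$-a.e.\ point the blowups inherit analytic dimension $1$ (this is essentially Cheeger's differentiation theory together with the stability of the differentiable structure under blowups). Since $\mathbb{R}^2$ with the Euclidean metric (and Lebesgue measure, the only measure up to constants making it a PI space in the relevant sense) has analytic dimension $2$, and bi-Lipschitz maps preserve analytic dimension, no such blowup can be bi-Lipschitz to $\mathbb{R}^2$. One subtlety: the statement says "no GH blowup", i.e.\ at \emph{every} point, not just a.e.\ point. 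To get this everywhere, I would argue that the construction is sufficiently homogeneous — the pillow space is self-similar, so the glued plane has a lot of symmetry and every blowup can be related to a blowup of $(X,d_0)$ or of the original square, which never has analytic dimension $2$; alternatively invoke that a PI space which is bi-Lipschitz to $\mathbb{R}^2$ on a ball would force analytic dimension $2$ on that ball, contradicting that it is $1$ a.e.\ (using that the differentiable structure is determined on full-measure sets and bi-Lipschitz invariance forces a contradiction on a set of positive measure inside the ball).

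**Expected main obstacle.** The routine-sounding but genuinely delicate step is the gluing: one must ensure the copies of the square from Theorem \ref{thm:main} fit together along their boundaries so that (a) the resulting metric $\hat\rho$ is genuinely a metric inducing the right topology, (b) the PI inequality survives the gluing with uniform constants (this requires the boundary pieces to be "thick"/non-degenerate enough — e.g.\ that the boundary of the square carries enough curve families, or that we can reflect), and (c) the quasisymmetry constants stay uniform across infinitely many cells. Using reflections across edges (so that the configuration is manifestly symmetric and the matching is automatic) is the safest route and is what I would pursue; the remaining work is then citing the appropriate gluing theorems for PI spaces and for quasisymmetric maps and checking their hypotheses. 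The "every blowup, not just a.e." refinement in the last sentence is the second point requiring a little care, handled via the self-similarity of the pillow space.
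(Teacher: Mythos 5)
Your proposal takes a genuinely different route from the paper, and both of its nonroutine steps contain real gaps. The paper does \emph{not} tile or glue anything: it simply takes a pointed measured Gromov--Hausdorff blowup of $([0,1]^2,\rho,\mu)$ at an interior point $y$. Since a measured GH blowup of a PI space is again a PI space (Cheeger), and standard compactness of quasisymmetric maps shows the blowup is quasisymmetric to $\RR^2$, the first two desired properties are immediate. This sidesteps the gluing entirely.

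On the gluing step: reflecting across edges does yield a consistent metric on $\RR^2$, but verifying that the Poincar\'e inequality survives is not a matter of quoting a black-box gluing theorem. The gluing sets are the $1$-dimensional boundary edges inside a $2$-dimensional PI space, and the standard gluing results require the glued set to be quantitatively thick (or the pieces to be uniform domains with controlled boundary measure, etc.). Nothing in the construction gives you this \emph{a priori}: the measure $\mu=\Pi_*\HH^Q$ could behave badly near $\partial[0,1]^2$, and you have not checked that $([0,1]^2,\rho,\mu)$ is, say, a uniform domain in the glued space or that the edges carry the needed lower mass bounds. You flag this but do not resolve it, and it is not a formality.

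On the final claim, the a.e.-vs.-everywhere gap you identify is the crux, and your proposed fix does not close it. You suggest ``a PI space bi-Lipschitz to $\RR^2$ on a ball forces analytic dimension $2$ on that ball,'' but it is the \emph{blowup} $\tilde P$ at a single point $p$, not the space itself, that is hypothetically bi-Lipschitz to $\RR^2$; to contradict the analytic dimension of the base space you must relate the two, and the na\"ive stability of analytic dimension under blowups is exactly an a.e.\ statement. The paper closes this gap with two extra ingredients: first, Le Donne's result that (off an exceptional $\mu$-null set of base points) a blowup of a blowup of $([0,1]^2,\rho)$ is again a blowup of $([0,1]^2,\rho)$; second, the lifting of such a blowup to a blowup of $(X,d_{SL})$, which exhibits each sheet as bi-Lipschitz to a subset of $\RR^2$, contradicting the pure $2$-unrectifiability established in Lemma~\ref{lem:unrect}. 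The pillow-specific unrectifiability, not an abstract analytic-dimension argument, is what rules out the bad blowup at \emph{every} point. Your instinct that ``every blowup can be related to a blowup of the original square'' points in the right direction, but that relation is precisely the Le Donne step, and one then still needs Lemma~\ref{lem:unrect} to reach the contradiction.
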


In other words, if the conclusion of Conjecture \ref{conj:Kleiner} holds for the pillow space, then the answer to Question \ref{q:1dimplane} is ``yes''. Thus, either the pillow space provides a new counterexample to Conjecture \ref{conj:Kleiner}, different in kind than those in \cite{AEBa}, or it provides an example solving Question \ref{q:1dimplane}. 

As noted in the introduction of \cite{DavidKleiner}, all known examples satisfying the assumptions of Question \ref{q:1dimplane} have the property that generic Gromov--Hausdorff blowups are bi-Lipschitz equivalent to planes, i.e., they are infinitesimally Euclidean. Thus, if the assumptions of Corollary \ref{cor:1dimplane} held, it would provide a truly new addition to our catalog of PI spaces.

\subsection{Organization of the paper and outline of the proof}
Let us describe the organization of the paper and the outline of the proof of Theorem \ref{thm:main}, highlighting some intermediate results of interest.

Section \ref{sec:prelims} of the paper contains preliminaries and background material. Section \ref{sec:cubical} describes a general framework for building self-similar spaces as limits of cubical complexes, and finally defines the pillow space rigorously as a specific instance of this framework in Definition \ref{def:pillowspace}. Section \ref{sec:cubical} also contains an outline of how Theorem \ref{thm:AEB} follows from \cite{AEBb}, since the framework there that is used to construct the pillow space is different from that of the present paper. Section \ref{sec:blowup} describes an ad hoc notion of ``blowup'' adapted to the self-similar structure of the pillow space, which we use frequently throughout the paper.

The first substantial result along the road to Theorem \ref{thm:main} concerns the symmetries of the pillow space $X$. A subgroup of the isometry group of $(X,d_0)$ can be generated by maps that ``flip'' all the pillows of a given scale, i.e., exchange their top and bottom sides. If $d$ is another metric on $X$ that is quasisymmetric to $d_0$, these mappings are \textit{a priori} no longer isometries of $d$; if they are, then we will say that $d$ is \emph{symmetric}. We show that it suffices to consider only symmetric metrics when searching for a Loewner structure on $X$:

\begin{proposition}\label{prop:SL}
Suppose that $(X,d_0)$ is quasisymmetric to a $Q$-Loewner space. Then there is a symmetric metric $d_{SL}$ on $X$ such that $(X,d_{SL})$ is $Q$-Loewner and quasisymmetric (by the identity map) to $(X,d_0)$.
\end{proposition}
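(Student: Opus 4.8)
The plan is to average the Loewner metric over the (finite, at each scale) group of pillow-flip symmetries and then pass to a limit. Let $G$ denote the group of isometries of $(X,d_0)$ generated by the flips of pillows of all scales. The key structural fact to establish first is that $G$, or at least a dense-enough approximating family, acts in a controlled way: the flips of scale-$n$ pillows commute with each other and with flips of all other scales (flips at disjoint pillows have disjoint support up to shared boundary; nested pillows interact only through their common top/bottom arcs, which the flips fix setwise), so for each $n$ the group $G_n$ generated by flips of pillows of scales $\leq n$ is a finite abelian $2$-group. Since $(X,d_0)$ is quasisymmetric to a $Q$-Loewner space $(X,d)$ via some $\eta$-quasisymmetry, and quasisymmetry maps conjugate the $d_0$-isometries in $G$ to $\eta'$-quasisymmetries of $(X,d)$ with a uniform distortion function $\eta'$ depending only on $\eta$, every element of $G$ acts on $(X,d)$ as a uniformly quasisymmetric self-map. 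I would record this uniform bound as the first lemma.

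Next I would symmetrize. For the finite group $G_n$ one can form a new metric by a max or a $p$-th power average, e.g. $d_n(x,y) = \bigl(\sum_{g\in G_n} d(gx,gy)^p\bigr)^{1/p}$ for a suitable fixed exponent, or more robustly work at the level of modulus/Loewner data rather than the metric itself. The point is that $d_n$ is $G_n$-invariant by construction, is bi-Lipschitz-with-uniform-constant comparable to $d$ because each $g$ is uniformly quasisymmetric and $X$ is bounded (so uniformly quasisymmetric self-maps of a bounded space are quasi-Lipschitz with controlled constants), and hence $(X,d_n)$ is Ahlfors $Q$-regular and $Q$-Loewner with constants independent of $n$ — Ahlfors regularity and the Loewner property are preserved under bi-Lipschitz maps, and the relevant constants degrade only by a fixed amount. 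One must check that the identity map $(X,d_0)\to(X,d_n)$ is quasisymmetric with a distortion function independent of $n$; this follows since the identity $(X,d_0)\to(X,d)$ is quasisymmetric and $d_n$ is uniformly bi-Lipschitz to $d$.

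Finally I would extract a limiting symmetric metric. Having a sequence $d_n$ of metrics on the fixed set $X$, all $\eta$-quasisymmetric to $d_0$ with a fixed $\eta$, all Ahlfors $Q$-regular and $Q$-Loewner with fixed constants, and with $d_n$ invariant under the increasing union $\bigcup_m G_m$ truncated at level $n$, I would apply a standard Arzel\`a--Ascoli / normal families argument for quasisymmetric maps (normalizing by three points) to pass to a subsequential limit metric $d_{SL}$, using that the class of $\eta$-quasisymmetric metrics compatible with a fixed compact space is sequentially compact in an appropriate sense. The limit $d_{SL}$ is then $Q$-Loewner and Ahlfors $Q$-regular (these pass to limits under the normalization, a point to be careful about — I would invoke the relevant persistence statements, possibly re-deriving Ahlfors regularity from the regularity of $d_n$ with uniform constant), quasisymmetric to $d_0$ by the identity, and invariant under every flip, hence symmetric. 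The main obstacle I anticipate is the limit step: ensuring that the Loewner property (a two-sided modulus estimate) genuinely survives the passage to the limit rather than only the quasisymmetric structure. I would address this either by taking $d_{SL}$ to be a Gromov--Hausdorff / uniform limit and citing stability of the Loewner property under such limits within a fixed Ahlfors-regular class, or — cleaner — by observing that the flip group is \emph{countable}, so one can symmetrize over all of $G$ at once via a weighted average $\sum_{g\in G} 2^{-\ell(g)} d(gx,gy)^p$ with $\ell$ a proper length function on $G$, check convergence using the uniform quasisymmetry bound and boundedness of $X$, and thereby avoid the limit of metrics entirely; I would present whichever of these makes the constants most transparent.
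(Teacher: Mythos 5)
Your overall strategy — symmetrize the Loewner metric over the flip group and handle the limit — matches the paper's in spirit, but there are two genuine gaps that would sink the argument as written.

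First, the central claim that ``uniformly quasisymmetric self-maps of a bounded space are quasi-Lipschitz with controlled constants'' (and hence that each $d_n$ is uniformly bi-Lipschitz to $d$) is false. Quasisymmetric self-homeomorphisms of a compact space need not be bi-Lipschitz at all; a snowflaking such as $t\mapsto t^{1/2}$ on $[0,1]$ gives a quasisymmetric self-map of a bounded interval with no bi-Lipschitz control, and nothing about the flips being $d_0$-isometries forces them to be bi-Lipschitz for an arbitrary quasisymmetric change of metric $d$. Once the uniform bi-Lipschitz claim goes, so does your conclusion that the symmetrized metrics $d_n$ are Ahlfors $Q$-regular and $Q$-Loewner with uniform constants. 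This is not a fixable technicality: the paper explicitly acknowledges that the $G$-averaged metric $d_S$ and measure $\mu_S$ need \emph{not} be Ahlfors $Q$-regular (``If we could prove directly that $\mu_S$ were Ahlfors $Q$-regular... but unfortunately this does not seem clear''). The paper's resolution is a substantive extra step: it establishes upper and lower density bounds for $\mu_S$ against $\mathcal{H}^Q_{d_S}$ at almost every point (Lemmas \ref{lem:symmetrize} and \ref{lem:density}), uses Lebesgue differentiation to find a positive-measure set $A_\delta$ of approximately-regular points, and then performs a \emph{blowup} (in the sense of Section \ref{sec:blowup}) at a density point of that set, using Lemma \ref{lem:symblowup} to keep symmetry, Lemma \ref{lem:blowup} to keep quasisymmetry to $d_0$, a tile-measure computation to recover Ahlfors $Q$-regularity of the blowup, and Proposition \ref{prop:quasiinv} to recover the Loewner property. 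Your proposal has no analogue of this blowup step, and it is exactly where the real work lies.

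Second, your preferred ``cleaner'' alternative — a weighted sum $\sum_{g\in G} 2^{-\ell(g)} d(gx,gy)^p$ over the \emph{countable} flip group — cannot produce a $G$-invariant metric. Invariance of such a sum under precomposition by $h\in G$ would require $\ell(gh^{-1})=\ell(g)$ for all $g,h$, i.e.\ $\ell$ constant, which makes the sum diverge. There is no translation-invariant summable weight on an infinite group. The paper sidesteps this by not working with the countable group at all: it averages over the \emph{compact} group $G\cong\mathbb{Z}_2^{\mathbb{N}}$ against its Haar (product) probability measure $\mathbb{P}$, setting $d_S(x,y)=\int_G d_L(gx,gy)\,d\mathbb{P}(g)$, and uses Lemma \ref{lem:convexcone} (that $\mathcal{C}_{\eta,d}$ is a closed convex cone) to see that this average remains quasisymmetric to $d_0$ with the same $\eta$. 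Averaging over the compact group also handles the measure $\mu_S = \int_G \mu_L\circ g^{-1}\,d\mathbb{P}(g)$ in one step, which is essential for the density/blowup argument. If you revise the proposal, I would suggest replacing the bi-Lipschitz claim with the density-point blowup mechanism and replacing the countable weighted sum with Haar averaging over $\mathbb{Z}_2^{\mathbb{N}}$.
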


In proving Theorem \ref{thm:main}, we may therefore assume that the pillow space admits a quasisymmetrically equivalent symmetric Loewner structure.

Next, we observe that $(X,d_0)$ is the union of squares isometric to $[0,1]^2$, each formed by considering only the top or bottom of each ``pillow'' in $X$. We call these squares ``sheets''. If $d_{SL}$ is a symmetric Loewner metric on $X$, the sheets need not be isometric to $[0,1]^2$ anymore, but they are all isometric to each other.

We show that, given a symmetric Loewner metric on $X$ quasisymmetric to $d_0$, each of these (pairwise isometric) sheets admits a measure making it into a PI space.

\begin{corollary}\label{cor:sheetPIintro}
Let $d_{SL}$ be a symmetric Loewner metric on $X$, quasisymmetric (by the identity) to $d_0$. Let $S$ be a sheet of $X$. Then there is a measure $\nu$ on $S$ making $(S, d_{SL}, \nu)$ a PI space.
\end{corollary}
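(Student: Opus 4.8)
The plan is to deduce Corollary~\ref{cor:sheetPIintro} from Proposition~\ref{prop:SL} together with a general principle: a PI space decomposes (up to the self-similar structure) into pieces, and the ``sheets'' are exactly such pieces, so they inherit a PI structure. Concretely, assume $(X,d_{SL})$ is $Q$-Loewner and Ahlfors $Q$-regular, with the $Q$-dimensional Hausdorff measure $\mathcal{H}^Q$ making it a PI space (Loewner Ahlfors regular spaces are PI, by Heinonen--Koskela). I would first fix a sheet $S\subseteq X$ and set $\nu = \mathcal{H}^Q|_S$, normalized. The goal is then to verify that $(S,d_{SL},\nu)$ is doubling and supports a $(1,p)$-Poincar\'e inequality for some $p$.

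The doubling property of $\nu$ is immediate once one knows that $S$ is itself Ahlfors $Q$-regular with respect to $d_{SL}$: this follows because $S$ is a retract of $X$ (the sheet is the image of a natural Lipschitz projection coming from the ``collapsing'' maps, which by symmetry of $d_{SL}$ is $1$-Lipschitz and maps $X$ onto $S$) and also $S$ is isometrically embedded, so $\mathcal{H}^Q(B(x,r)\cap S)$ is comparable to $\mathcal{H}^Q(B(x,r))$ for $x\in S$; here one uses that the complement of $S$ near a point of $S$ is also, via the self-similar flipping symmetries, comparable in measure to $S$ near that point. The key step, and the one I expect to be the main obstacle, is the Poincar\'e inequality on $S$. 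The natural approach is to transfer curves: given $x,y\in S$ and a Loewner-supplied family of curves in $X$ joining them, push them forward under the $1$-Lipschitz retraction $\pi\colon X\to S$ to obtain curves in $S$. Since $\pi$ is $1$-Lipschitz it does not increase lengths, and since $\nu = \pi_\#$-compatible with $\mathcal{H}^Q$ (again using symmetry, so that $\pi$ has bounded multiplicity in the measure-theoretic sense), one can estimate the modulus of the pushed-forward family from below by the modulus of the original family. Combining this with the characterization of PI spaces via abundance of curves (e.g., the Keith/Heinonen--Koskela curve-based criterion, or Semmes' pencil-of-curves condition), one concludes that $(S,d_{SL},\nu)$ supports a Poincar\'e inequality.

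The delicate point inside this argument is controlling how the retraction $\pi$ interacts with the measure: one needs that for a ``generic'' curve $\gamma$ in $X$, the image $\pi\circ\gamma$ is non-degenerate and that the multiplicity function $x\mapsto \#\pi^{-1}(x)$ or, more precisely, the measure-theoretic overlap of the sheets sharing the point $\pi(x)$, is uniformly bounded. This is exactly where the symmetric structure of $d_{SL}$ (Proposition~\ref{prop:SL}) is essential: the flipping isometries permute the sheets meeting at any pillow, so the ``multiplicity'' is controlled by the self-similar combinatorics (bounded by a fixed constant depending only on the pillow construction), and the isometries guarantee the sheets are mutually isometric and carry equal measure. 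With this bounded-multiplicity fact in hand, modulus estimates transfer from $X$ to $S$ with only multiplicative losses, preserving the Loewner-type lower bounds needed for the Poincar\'e inequality.

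Finally, I would assemble these pieces: Ahlfors regularity of $(S,d_{SL},\nu)$ gives doubling; the curve-transfer argument gives the Poincar\'e inequality; hence $(S,d_{SL},\nu)$ is a PI space. I would remark that the statement that sheets are pairwise isometric (already noted in the text preceding the corollary) makes the choice of sheet irrelevant, so it suffices to carry out the argument for one sheet. The main obstacle, to reiterate, is making rigorous the claim that pushing a Loewner curve family forward under the collapsing retraction loses modulus by at most a bounded factor; everything else is a routine consequence of Ahlfors regularity and the definition of PI spaces.
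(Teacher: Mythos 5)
Your proposal has a genuine gap, and it centers on the choice of measure. You set $\nu = \mathcal{H}^Q|_S$ (the restriction of the ambient $Q$-dimensional Hausdorff measure to the sheet), but this is almost certainly the zero measure. The sheet $S$ is a ``thin'' subset of $X$: in the base metric $d_0$, $S$ is isometric to a Euclidean square, so $\dim_H(S,d_0)=2 < p=\log 10/\log 3 = \dim_H(X,d_0)$, and $\mathcal{H}^p|_S\equiv 0$. Nothing in the hypotheses forces the situation to improve for $d_{SL}$. The justification you give for Ahlfors regularity of $\mathcal{H}^Q|_S$ --- that $S$ is a Lipschitz retract of $X$, hence its measure is locally comparable to that of $X$ --- is not valid: being a Lipschitz retract gives no lower bound on measure (a single point is a Lipschitz retract of the square). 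The ambient $\mathcal{H}^Q$ near a point of $S$ ``sees'' the uncountably many other sheets passing nearby, not $S$ alone.

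The paper sidesteps this entirely by using the \emph{pushforward} measure $\nu = \Pi_*(\mathcal{H}^Q)$ rather than a restriction. Lemma \ref{lem:sheetmetric} uses the symmetry of $d_{SL}$ to show that $\Pi$ is a Lipschitz quotient map from $(X,d_{SL})$ onto $([0,1]^2,\rho)\cong (S,d_{SL})$, and then the general result Theorem \ref{thm:LQ} (Lipschitz quotient maps push forward PI structures) finishes the job in one stroke. This is quite different from your proposed strategy of transferring a Loewner curve family through a retraction and estimating modulus: the paper never works with curves or modulus here at all --- Theorem \ref{thm:LQ} is proved via Keith's Lipschitz-function characterization of the Poincar\'e inequality together with the Haj\l asz--Koskela ``supercritical'' sup-bound. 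Your modulus-transfer idea is not obviously doomed if combined with the correct pushforward measure, and you are right to flag the bounded-multiplicity issue as the crux, but as written the proposal both chooses the wrong measure and leaves the central modulus estimate as a declared obstacle rather than resolving it.
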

Corollary \ref{cor:sheetPIintro} is a consequence of the more detailed Corollary \ref{cor:sheetPI} below. A significant step in the proof of Corollary \ref{cor:sheetPIintro}/\ref{cor:sheetPI} is the following general result, proven in the Appendix, which we consider to be of independent interest. This theorem first arose in earlier unpublished work of the first author and Jeff Cheeger.

\begin{theorem}[Cheeger--David]\label{thm:LQ}
Let $(X,d,\mu)$ be any PI space, let $f\colon (X,d)\rightarrow (Y,\rho)$ be a Lipschitz quotient map, and let $\nu = f_{*}\mu$. If $\nu$ is finite on some ball, then $(Y,\rho,\nu)$  is a PI space.
\end{theorem}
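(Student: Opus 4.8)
The plan is to verify the two defining properties of a PI space for $(Y,\rho,\nu)$: that $\nu$ is Ahlfors-type doubling, and that $(Y,\rho,\nu)$ supports a $p$-Poincaré inequality for some $p$. Here a Lipschitz quotient map means a surjective $L$-Lipschitz map that is also \emph{co-Lipschitz}: there is $c>0$ with $f(B(x,r)) \supseteq B(f(x), cr)$ for all $x$ and all $r$ up to the scale of the space. The doubling property of $\nu$ will follow almost directly from this: for a ball $B(y,2r)$ in $Y$, pick $x \in f^{-1}(y)$; then $B(y,2r) \subseteq f(B(x, 2r/c))$, so $\nu(B(y,2r)) \leq \mu(B(x,2r/c))$, while $\nu(B(y,r)) \geq \mu(f^{-1}(y) \cap \text{(something)})$ — one has to be slightly careful here, since fibers can be large. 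The cleaner route is to compare with a ball that $f$ maps \emph{onto} $B(y,r)$: since $f(B(x,r/c)) \supseteq B(y,r)$ we get $\nu(B(y,r)) \geq \mu(f^{-1}(B(y,r)) \cap B(x,r/c))$, and one wants a lower bound. The trick is that $f^{-1}(B(y,r)) \supseteq B(x,r/L)$, so $\nu(B(y,r)) = \mu(f^{-1}(B(y,r))) \geq \mu(B(x,r/L))$; combined with $\nu(B(y,2r)) = \mu(f^{-1}(B(y,2r))) \leq \mu(B(x, 2r/c))$ (using co-Lipschitz: $f^{-1}(B(y,2r)) \subseteq B(x, 2r/c)$, which holds since any $z$ with $f(z) \in B(y,2r)$ satisfies $d(z,x) < 2r/c$ by co-Lipschitz applied at $z$... this needs the right formulation). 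Modulo getting these inclusions stated correctly, doubling of $\nu$ reduces to doubling of $\mu$ via $\nu = f_*\mu$ and $\nu(B(y,r)) = \mu(f^{-1}(B(y,r)))$.

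The heart of the matter is the Poincaré inequality. I would use Semmes's/Heinonen–Koskela's characterization of PI spaces via the existence of ``thick'' families of curves — or, more efficiently, the characterization of the $p$-Poincaré inequality in terms of \emph{pencils of curves} or via \emph{upper gradients and the modulus estimate} à la Keith. Given a Lipschitz function $u$ on $Y$, pull it back to $v = u \circ f$ on $X$, which is Lipschitz with $\text{Lip}(v) \leq L\cdot\text{Lip}(u)$, and more precisely any upper gradient $g$ of $u$ on $Y$ pulls back to an upper gradient $L \cdot (g\circ f)$ of $v$ on $X$ — because $f$ sends curves to curves with length distortion at most $L$. Apply the Poincaré inequality of $X$ to $v$ on a ball $B(x, r/L)$ with $f(x) = y$: this controls $\fint_{B(x,r/L)} |v - v_{B}| \, d\mu$ by $r \fint_{\lambda B(x,r/L)} (g\circ f)\,d\mu$. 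The left side, by $\nu = f_*\mu$ and $f(B(x,r/L)) \subseteq B(y,r)$, relates to an oscillation of $u$; the right side, by pushforward, becomes $r \fint_{B(y, \lambda' r)} g \, d\nu$ up to comparing averaging balls (which is where doubling is used). The genuinely delicate point — the main obstacle — is that $\fint_{B(x,r/L)} |v - v_B|\,d\mu$ bounds the oscillation of $u$ only on the \emph{image} $f(B(x,r/L))$, which may be a proper subset of $B(y,r)$, and we need to fill out the whole ball $B(y,r)$. The co-Lipschitz property is exactly what lets us do this: $f(B(x, r/c)) \supseteq B(y,r)$, so running the argument on $B(x,r/c)$ instead ensures the image covers the target ball, and then one checks that $v_{B(x,r/c)}$ serves as a valid constant for estimating $\fint_{B(y,r)} |u - u_{B(y,r)}|\,d\nu$ up to a factor (replace $u_{B(y,r)}$ by $v_{B(x,r/c)}$ via the triangle inequality, at the cost of a constant).

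I would organize the proof as: (1) record the precise meaning of Lipschitz quotient map and derive the two ball inclusions $B(x, r/L) \subseteq f^{-1}(B(f(x),r))$ and $f^{-1}(B(f(x),r)) \subseteq B(x, r/c)$ — or whatever the correct constants are; (2) deduce $\nu(B(y,r)) \asymp \mu(B(f^{-1}(y)\text{-fiber's chosen preimage}, r))$ in the appropriate two-sided sense and conclude $\nu$ is doubling; (3) set up the pullback of functions and upper gradients, being careful that upper gradients pull back correctly through a co-Lipschitz Lipschitz map (curves in $Y$ lift, or rather images of curves in $X$ exhaust enough curves in $Y$ — actually the clean statement is: if $g$ is an upper gradient of $u$ on $Y$ then $L\,(g \circ f)$ is an upper gradient of $u\circ f$ on $X$, since $f$ maps rectifiable curves to rectifiable curves with $\ell(f\circ\gamma) \leq L\,\ell(\gamma)$); (4) apply the Poincaré inequality of $X$ on a ball of radius $\sim r$ and push forward. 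The finiteness hypothesis on some ball guarantees $\nu$ is a genuine (Radon, locally finite) measure so the statement is non-vacuous; combined with doubling it is finite on all balls. The only real subtlety beyond bookkeeping is step (4)'s covering issue, handled via co-Lipschitzness as described; everything else is a routine change of variables through the pushforward.
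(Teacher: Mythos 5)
There is a genuine gap, and it is the central one. You repeatedly lean on the inclusion $f^{-1}(B(y,r)) \subseteq B(x,r/c)$ for a chosen $x\in f^{-1}(y)$ (you even flag it yourself as ``needs the right formulation''). No formulation of the Lipschitz-quotient condition gives this: the co-Lipschitz property $B(f(x),L^{-1}r)\subseteq f(B(x,r))$ says nothing about the size or localization of fibers, which can be enormous. The paper's own closing remark gives the simplest counterexample: $f\colon[0,1]^2\to[0,1]$ the coordinate projection, where $f^{-1}(B(y,r))$ is a full-height strip, not contained in any ball of radius $\approx r$. This breaks your doubling argument (the claimed bound $\nu(B(y,2r))\le\mu(B(x,2r/c))$ is false) and, more seriously, it breaks your Poincar\'e argument, because $\fint_{B(y,r)}|u-u_{B(y,r)}|\,d\nu$ pushes back under $\nu=f_*\mu$ to an integral over \emph{all of} $f^{-1}(B(y,r))$, not over the single ball $B(x,r/c)$. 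Applying the Poincar\'e inequality on $B(x,r/c)$ only controls a (possibly $\mu$-negligible) fraction of that preimage, so the constant $v_{B(x,r/c)}$ gives no control over the $\nu$-average oscillation on $B(y,r)$.

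The missing ingredient is a covering lemma: one must cover $f^{-1}(B)$ by a countable family of balls $B_i$ of radius $\approx r$ with uniformly bounded overlap, each satisfying $B\subseteq f(B_i)$, and then sum the contributions. This is exactly Lemma~\ref{lem:cover} in the paper, and it is used both to prove that $\nu$ is doubling (compare $\nu(2B)$ and $\nu(B)$ via the bounded-overlap cover) and in the Poincar\'e estimate. Even granting the cover, there is a second point you do not address: the pieces have genuinely different averages $v_{B_i}$, so an $L^1$-Poincar\'e inequality on each $B_i$ does not let you compare all the integrands to a single constant. The paper sidesteps this by first upgrading to a \emph{supercritical} exponent $p>s$ (always possible for a PI space) and invoking the Haj\l asz--Koskela pointwise estimate $\sup_{B_i}|v-v_{B_i}|\lesssim r(\fint_{5CB_i}\Lip[v]^p)^{1/p}$ (Theorem~\ref{thm:HK}). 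Since every $B_i$ contains a preimage of the fixed center $y_0$, the common value $v(x_i)=g(y_0)$ pins the constants on all pieces simultaneously, and the sum then telescopes cleanly. Your proposal's use of upper gradients rather than $\Lip$ is a legitimate stylistic alternative, but it does not touch the two real obstacles above.
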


Finally, in Section \ref{sec:mainproof} we show that the PI space from Corollary \ref{cor:sheetPIintro}, which is quasisymmetric to the standard square, has analytic dimension $1$ as follows: By a result of the second author and Kleiner (Theorem \ref{thm:davidkleiner}), the space has analytic dimension $1$ or $2$. Moreover, if the analytic dimension were $2$, that result would imply that each sheet has some $2$-rectifiable structure. After a blowup argument, we argue that this would force the hypothetical Loewner metric on $X$ to be bi-Lipschitz to the original metric $(X,d_0)$. However, it is not difficult to show that $(X,d_0)$ itself is not Loewner (see Lemma \ref{lem:notloewner}). Hence, the analytic dimension of the PI structure on the square sheets must be $1$.

\subsection{Remarks on constructions in other dimensions}
The pillow space $(X,d_0)$ appearing in Theorem \ref{thm:main} is topologically $2$-dimensional. One may consider a $1$-dimensional version of this construction: start with a line segment, double the middle third, and repeat inductively on each smaller edge. The limit yields a compact metric space, quite similar to examples in \cite{LangPlaut, CK_PI}. It is not difficult to see, however, that due to posessing local cut points this example is not Combinatorially Loewner, and hence cannot be quasisymmetric to an Ahlfors $Q$-regular $Q$-Loewner space. 

One could also investigate higher-dimensional pillow spaces: begin with a $d$-cube for some $d\geq 3$, subdvide into $3^d$ identical cubes, double the central one, and repeat. In this case, the limiting space \emph{would} be approximately self-similar and Combinatorially Loewner, i.e., the conclusion of Theorem \ref{thm:AEB} from \cite{AEBb} would still hold. (Here, one adapts the proof of Theorem \ref{thm:AEB} by using general cubical replacement rules in the sense of \cite{AEBb}.) In addition, many of the arguments of the present paper would go through, up until the use of Theorem \ref{thm:davidkleiner}, which is known only in dimension $2$. Thus, the methods of this paper could prove, for instance, the following weaker analog of Corollary \ref{cor:1dimplane} in higher dimensions: \emph{Suppose $d\geq 3$ and the $d$-dimensional analog of the pillow space is quasisymmetric to an Ahlfors $Q$-regular $Q$-Loewner space. Then there exists a PI space quasisymmetric to $\RR^d$ for which no Gromov--Hausdorff blowup is bi-Lipschitz to $\RR^d$.} Such a construction would also be new, even if the value of the analytic dimension would be unclear. Indeed, all known PI spaces quasisymmetric to $\mathbb{R}^d$ have the property that their generic Gromov--Hausdorff blowups are bi-Lipschitz equivalent to $\mathbb{R}^d$. (See the introduction of \cite{DavidKleiner} for a more general statement on our current state of knowledge.) Thus, this result would provide a truly new example in higher dimensions as well, if the assumption on the $d$-dimensional pillow space held.

In our view, these results provide a further strong impetus to understand the problem of finding Loewner metrics in the quasisymmetry classes of fractal spaces, in addition to the original motivations described above. 

\section{Notation, background, and preliminaries}\label{sec:prelims}
We use the notation $A\lesssim B$ to indicate that $A\leq cB$ for some implied constant $c$, which may change from line to line. If $A\lesssim B$ and $B\lesssim C$ then we write $A\approx B$. We also use the ``big O'' and ``little o'' notations in the standard way.

\subsection{Metrics and measures}

We use the notation $(X,d)$ to denote a metric space and $(X,d,\mu)$ to denote a metric measure space; our measures will always be Radon. Open balls in a metric space are denoted $B(x,r)=\{y: d(x,y)<r\}$. In some situations, to clarify the space or metric in which the ball is taken, we may write $B_X(x,r)$ or $B_d(x,r)$.

If $X$ is a metric space, $\epsilon>0$ and $A\subset X$, then $N_\epsilon(A)=\bigcup_{a\in A} B(a,\epsilon)$ is the $\epsilon$-neighborhood of $A$. We say that $A$ is $\epsilon$-dense in $B$, if $B\subset N_\epsilon(A)$.  

A metric space $(X,d)$ is called \emph{doubling} if every ball can be covered by a fixed number of balls of half the radius. A measure $\mu$ on a metric space is $C$-\emph{doubling} if $\mu(B(x,2r))\leq C\mu(B(x,r))$ for all points $x$ and radii $r$. A complete doubling metric space supporting a doubling measure must be a doubling metric space \cite[Ch. 13]{He}.

In addition to general measures, we also use the notions of Hausdorff measure, denoted $\HH^p$ in dimension $p$, and Hausdorff dimension, $\dim_H$, of a metric space; see \cite[Ch. 8]{He} for definitions.

A metric space $X$ is called \emph{Ahlfors $p$-regular} if there exists a constant $C>0$ such that, for all $x\in X$ and $r\in (0,2\diam(X)]$, we have
\[
C^{-1}r^p \leq \HH^p(B(x,r)) \leq C r^p.
\]

By a standard argument, this is equivalent to the property that there exists a Radon measure $\mu$ on $X$ for which
\[
C^{-1}r^p \leq \mu(B(x,r)) \leq C r^p \text{ for all }x\in X, r\in (0,2\diam(X)].
\]
This property is of course stronger than $\mu$ being a doubling measure.

\subsection{Mappings}
Let $(X,d_X)$ and $(Y,d_Y)$ be metric spaces, and $L>0$. A  mapping $f:X\rightarrow Y$ is $L$-\emph{Lipschitz} if 
$$ d_Y(f(x),f(y))\leq Ld_X(x,y) \text{ for all } x,y\in X$$
and $L$-\emph{bi-Lipschitz} if 
$$ L^{-1} d_X(x,y) \leq d_Y(f(x),f(y))\leq Ld_X(x,y) \text{ for all } x,y\in X.$$ If it is not necessary to emphasize the constant, we just say \emph{Lipschitz} or \emph{bi-Lipschitz}.

A slightly more elaborate notion is that of a quasisymmetric mapping.
\begin{definition}\label{def:quasisym} Let $\eta:[0,\infty)\to [0,\infty)$ be a homeomorphism.
    A homeomorphism $f\colon X\to Y$ is an \emph{$\eta$-quasisymmetry}, if for all $x,y,z\in X$, with $x\neq z$, 
    \[
    \frac{d_Y(f(x),f(y))}{d_Y(f(x),f(z))} \leq \eta \left(\frac{d_X(x,y)}{d_X(x,z)}\right).
    \]
    We call $\eta$ the \emph{quasisymmetry function} of the mapping $f$.
     
    If such a homeomorphism $f$ exists, we say that $(X,d_X)$ and $(Y,d_Y)$ are quasisymmetric, or $\eta$-quasisymmetric if we wish to emphasize the $\eta$-dependence.
\end{definition}
We write $(X,d_X)\sim_{\QS} (Y,d_Y)$ if $X$ and $Y$ are quasisymmetric and $(X,d_X)\sim_{\QS,\eta} (Y,d_Y)$ if $X,Y$ are $\eta$-quasisymmetric. If $X=Y$, we also write $(X,d_1)\sim_{\QS,\eta,I} (X,d_2)$ if the identity map $I(x)=x$ is a $\eta-$quasisymmetry between $(X,d_1)$ and $(X,d_2)$.

It is an easy observation that the inequality in Definition \ref{def:quasisym} can be reversed (if $x\neq y$) by taking reciprocals and switching the role of $x$ and $y$. Thus, if $f$ is quasisymmetric, then
\begin{equation}\label{eq:invquas}
    \frac{d_Y(f(x),f(y))}{d_Y(f(x),f(z))} \geq \frac{1}{\eta \left(\frac{d_X(x,z)}{d_X(x,y)}\right)}.
\end{equation}
In particular, if $f$ is an $\eta$-quasisymmetry, then $f^{-1}$ is a $\eta_1$-quasisymmetry with $\eta_1(t)=\frac{1}{\eta^{-1}(t^{-1})}$.

We define the collection of all metrics on a given space $X$ that are $\eta$-quasisymmetric to an initial metric $d$:
\[
\cC_{\eta,d} = \{d':X\times X \to [0,\infty) : d' \text{ is a metric and } (X,d) \sim_{QS,\eta,I} (X,d')\}.
\]

The following lemma is trivial, but seems to not have been emphasized in the literature previously. Fix a metric space $(X,d)$, let $D(X)$ be the space of all metrics on $X$; of course, $\cC_{\eta,d} \subset D(X)$. 
\begin{lemma}\label{lem:convexcone}
    The set $\cC_{\eta,d}$ is a closed convex sub-cone of $D(X)$.
\end{lemma}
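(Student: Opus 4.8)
The plan is to verify the three assertions in the statement -- that $\cC_{\eta,d}$ is a sub-cone, that it is convex, and that it is closed -- more or less directly from the definition of an $\eta$-quasisymmetry via the identity map. Throughout, the key point is that all the relevant inequalities involve only \emph{ratios} $d'(x,y)/d'(x,z)$, and the class of functions satisfying such a ratio bound is stable under the operations in question.

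\textbf{Cone property.} First I would observe that if $d' \in \cC_{\eta,d}$ and $\lambda > 0$, then $\lambda d'$ is again a metric on $X$, and for all $x,y,z \in X$ with $x \neq z$ we have
\[
\frac{(\lambda d')(x,y)}{(\lambda d')(x,z)} = \frac{d'(x,y)}{d'(x,z)} \leq \eta\!\left(\frac{d(x,y)}{d(x,z)}\right),
\]
so $\lambda d' \in \cC_{\eta,d}$. Hence $\cC_{\eta,d}$ is closed under positive scaling. (One should note it is not empty, since $d \in \cC_{\eta,d}$ because the identity is an $\eta_0$-quasisymmetry for $\eta_0(t) = t$, and being an $\eta$-quasisymmetry for the given $\eta$ follows as long as $\eta(t) \geq t$; in any case $d$ itself lies in the cone or one simply works with the possibly-empty cone, for which all three claims are vacuous. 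I would phrase this so the statement is literally correct either way.)

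\textbf{Convexity.} Next, suppose $d_1, d_2 \in \cC_{\eta,d}$ and $t \in [0,1]$; set $d_t = t d_1 + (1-t) d_2$. It is standard that a convex combination of metrics is a metric (nonnegativity, symmetry, and the triangle inequality are all preserved by taking positive combinations, and $d_t(x,y) = 0$ forces $d_1(x,y) = d_2(x,y) = 0$, hence $x = y$). For the quasisymmetry estimate, fix $x,y,z$ with $x\neq z$ and let $a_i = d_i(x,y)$, $b_i = d_i(x,z)$. We know $a_i \leq \eta(d(x,y)/d(x,z)) \, b_i =: M b_i$ for $i = 1,2$ with the \emph{same} constant $M$. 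Then
\[
\frac{d_t(x,y)}{d_t(x,z)} = \frac{t a_1 + (1-t) a_2}{t b_1 + (1-t) b_2} \leq \frac{t M b_1 + (1-t) M b_2}{t b_1 + (1-t) b_2} = M = \eta\!\left(\frac{d(x,y)}{d(x,z)}\right),
\]
which is exactly the required bound for $d_t$. So $d_t \in \cC_{\eta,d}$, combined with the cone property this also shows $\cC_{\eta,d}$ is closed under arbitrary nonnegative linear combinations.

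\textbf{Closedness.} Finally, for closedness I would fix a topology on $D(X)$ -- the natural one is pointwise convergence on $X \times X$ (or, equivalently for the purposes of this lemma, since every statement is about finitely many points at a time, any topology between pointwise and uniform convergence works; I expect the paper intends pointwise). Suppose $d_n \in \cC_{\eta,d}$ and $d_n \to d'$ pointwise. The limit $d'$ is symmetric, satisfies the triangle inequality, and has $d'(x,x) = 0$, since all these are closed conditions under pointwise limits; the only subtlety is strict positivity off the diagonal, i.e.\ that $d'$ is genuinely a metric and not a pseudometric. This is where the quasisymmetry hypothesis is essential: if $d'(x,z) = 0$ for some $x \neq z$, pick any third point $y$ (using that $X$ has at least, say, the two points $x,z$; if $|X| \leq 1$ there is nothing to prove, and if $|X|=2$ one argues directly). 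Applying the reversed inequality \eqref{eq:invquas} to $d_n$ gives
\[
\frac{d_n(x,z)}{d_n(x,y)} \geq \frac{1}{\eta\!\left(\frac{d(x,y)}{d(x,z)}\right)} > 0,
\]
a bound uniform in $n$; passing to the limit yields $d'(x,z) \geq d'(x,y) \cdot (\text{positive constant}) > 0$ provided $d'(x,y) > 0$ -- and one can choose $y$ so that this holds, e.g.\ any $y$ with $d(x,y)>0$ not previously shown to be at $d'$-distance zero, using connectivity of the ``$d'$-distance zero'' equivalence classes argument, or more simply: the set of points at $d'$-distance $0$ from $x$ would have to be open and closed and a single point by the uniform lower bound, contradiction. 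Once $d'$ is known to be a metric, the quasisymmetry inequality for $d'$ follows by taking $n \to \infty$ in $d_n(x,y)/d_n(x,z) \leq \eta(d(x,y)/d(x,z))$, since the left side converges (denominator nonzero in the limit). Hence $d' \in \cC_{\eta,d}$.

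\textbf{Main obstacle.} The only genuinely non-formal point is the closedness argument, specifically ruling out that a pointwise limit of the $d_n$ degenerates into a pseudometric: one must leverage \eqref{eq:invquas} to get a \emph{uniform-in-$n$} two-sided comparison of $d_n$-distances emanating from a fixed point, and then argue that the ``distance-zero'' relation for the limit is trivial. Everything else -- the cone and convexity properties -- reduces to the elementary observation that the bound $a \leq M b$ with a common $M$ survives under nonnegative combinations of the pairs $(a,b)$, which is the mediant inequality used in the convexity step above. I would make sure the final writeup states clearly which topology on $D(X)$ is meant, since the lemma as phrased leaves it implicit.
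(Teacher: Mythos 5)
Your cone and convexity arguments are essentially the paper's: the paper simply observes that $\cC_{\eta,d}$ is cut out inside $D(X)$ by the \emph{linear} constraints
\[
d'(x,y) \;\le\; \eta\!\left(\frac{d(x,y)}{d(x,z)}\right) d'(x,z)\qquad (x\ne z),
\]
and any set defined by linear inequalities is a closed convex cone. Your computations (positive scaling, and the mediant inequality) are just this observation made explicit, and they are correct.

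For closedness, however, you have misread the target and, in trying to prove something stronger, you introduce a step that is actually false. The lemma asserts that $\cC_{\eta,d}$ is a closed sub-cone \emph{of} $D(X)$, i.e.\ closed \emph{in} $D(X)$. So one takes $d_n \in \cC_{\eta,d}$ with $d_n \to d'$ pointwise \emph{and} $d' \in D(X)$ --- that $d'$ is already a metric is part of the hypothesis, not something to prove. Given that, closedness is immediate: each linear constraint passes to the pointwise limit. Your lengthy non-degeneracy argument is therefore unnecessary.

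Worse, the non-degeneracy claim itself is not true: taking $d_n = d/n$ (each a positive multiple of $d$, hence in $\cC_{\eta,d}$ once $d$ is) gives $d_n \to 0$ pointwise, and the zero function is a pseudometric, not a metric. So $\cC_{\eta,d}$ is \emph{not} closed in the larger space of all symmetric nonnegative functions on $X\times X$, and no argument can establish that. Concretely, your ``open and closed'' reasoning breaks down because the reversed inequality \eqref{eq:invquas} only yields $d'(x,z) \ge c\, d'(x,y)$ for a positive $c$, which is vacuous if $d'(x,y)=0$ for \emph{every} $y$; nothing rules that out. The fix is simply to delete that portion and use the intended reading of ``closed in $D(X)$''.
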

\begin{proof}
    The cone $\cC_{\eta,d}$ is closed and convex because it is defined by the following linear constrants
    \[
    d'(x,y)\leq \eta \left(\frac{d(x,y)}{d(x,z)}\right) d'(x,z).
    \]
    for each $x,y,z\in X$ with $x\neq z$. 
\end{proof}

One more class of mappings will play a role below:
\begin{definition}
A mapping $f\colon X\rightarrow Y$ between two metric spaces is a \emph{Lipschitz quotient map} if there is a constant $L>0$ such that
$$ B(f(x), L^{-1}r) \subseteq f(B(x,r)) \subseteq B(f(x),Lr) \text{ for all } x\in X, r>0.$$
\end{definition}
Note that the second inclusion just says that $f$ is $L$-Lipschitz. Lipschitz quotient maps need not be homeomorphisms, but are always open mappings.

\subsection{Poincar\'e and Loewner}

Following Keith \cite{Keith}, we use the following notation: $\text{LIP}_0(X)$ is the space of all real-valued Lipschitz functions on $X$ with compact support. For $f\in \text{LIP}_0(X)$, define the pointwise Lipschitz constant to be 
$$ \Lip[f](x) = \limsup_{r\rightarrow 0} \sup_{y\in B(x,r)} \frac{|f(y)-f(x)|}{r}.$$

We will use the basic fact that if $f\colon X\rightarrow Y$ is $L$-Lipschitz and $g\colon Y \rightarrow \RR$ is Lipschitz, then
\begin{equation}\label{eq:Lip}
 \Lip[g\circ f](x) \leq L \cdot \Lip[g](f(x)).
\end{equation}

The following definition of the Poincar\'e inequality in metric measure spaces is different than that originally introduced in \cite{HK}, but equivalent by \cite{Keith}. See also \cite{KRajala}.
\begin{definition}
A complete metric measure space $(X,d,\mu)$ is a \emph{$p$-PI space}, for some $p\in [1,\infty)$, if
\begin{enumerate}[(a)]
\item $\mu$ is a doubling measure, and
\item There is a constant $C>0$ such that
$$ \fint_B |u-u_B| d\mu \leq C \diam(B) \left( \fint_{CB} (\Lip[u])^p\right)^{1/p}$$
whenever $u\in \text{LIP}_0(X)$ and $B$ is a ball in $X$. (Here $\fint_B g$ and $g_B$ both mean the average of $g$ over the set $B$, i.e., $\mu(B)^{-1}\int_B g$.)
\end{enumerate}
A \emph{PI} space is a $p$-PI space for some $p\in[1,\infty)$.

\end{definition}

Closely related to the Poincar\'e inequality is the \emph{Loewner condition} on a metric measure space. Vaguely speaking, this condition requires that each pair of continua in the space is connected by a sufficiently thick family of curves, as measured by modulus. The precise definition of modulus and the Loewner condition play no role in the results below. Only the following crucial fact, due to Heinonen--Koskela \cite{HK}, is needed. (See also \cite[Ch. 9]{He}.) For the purposes of this paper, a reader may take this result to be the definition of an Ahlfors $Q$-regular $Q$-Loewner space.

\begin{theorem}[Heinonen--Koskela]\label{thm:LoewnerPI} Let $Q>1$.
A complete Ahlfors $Q$-regular space is Loewner with exponent $Q$ if and only if it is a $Q$-PI space.
\end{theorem}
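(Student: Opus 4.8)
The statement to prove is Theorem~\ref{thm:LoewnerPI}, the Heinonen--Koskela theorem characterizing Ahlfors $Q$-regular $Q$-Loewner spaces as precisely the $Q$-PI spaces. Since this is a famous result attributed to Heinonen--Koskela with a specific citation \cite{HK} (and \cite[Ch.~9]{He}), the ``proof'' here is really a pointer to the literature rather than original work. But let me sketch how I would prove it.

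\textbf{The plan.} The proof splits into two implications, and the two directions use rather different machinery. The forward direction (Loewner $\Rightarrow$ $Q$-PI) goes through modulus estimates and a telescoping/chaining argument. The reverse direction ($Q$-PI $\Rightarrow$ Loewner) requires upgrading the Poincaré inequality to a modulus lower bound, which is the more delicate direction historically.

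\textbf{Forward direction: Loewner $\Rightarrow$ $Q$-PI.} Given a ball $B = B(x_0,r)$ and $u \in \mathrm{LIP}_0(X)$, one wants to bound $\fint_B |u - u_B|\,d\mu$. The standard approach: for $\mu$-a.e.\ pair of points $y,z \in B$, estimate $|u(y) - u(z)|$ by integrating $\Lip[u]$ (or an upper gradient) along a curve, then average over a thick family of such curves connecting small balls around $y$ and around $z$. The Loewner condition guarantees that the family of curves joining $\obar{B(y,t)}$ to $\obar{B(z,t)}$ has modulus bounded below by a function of the relative distance, and combining this with the Ahlfors regularity (which controls how modulus interacts with the measure) yields, after a Fubini/averaging argument over $y$ and $z$ in $B$, exactly the Poincaré inequality with some dilation constant $C$ on the right-hand ball $CB$. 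The key quantitative input is that the Loewner function $\phi$ is positive, and Ahlfors $Q$-regularity forces $Q$ to be the correct exponent for the modulus. This direction is cleaner and is essentially \cite[Theorem~9.10 and its proof strategy]{He}.

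\textbf{Reverse direction: $Q$-PI $\Rightarrow$ Loewner.} This is the harder implication and the main obstacle. Given two disjoint nondegenerate continua $E, F$ with relative distance $\Delta = \mathrm{dist}(E,F)/\min(\diam E, \diam F)$, one must produce a lower bound $\mathrm{Mod}_Q(E,F) \geq \phi(\Delta) > 0$. The route: first, the $Q$-PI condition together with Ahlfors $Q$-regularity implies the space is quasiconvex, hence (after biLipschitz change of metric) geodesic, and it implies a $(1,Q)$-Poincaré inequality in the ``upper gradient'' formulation by \cite{Keith}. Second, suppose for contradiction that some admissible density $\rho$ for the curve family $\Gamma(E,F)$ has small $L^Q$ norm; one constructs from $\rho$ a function $u$ (roughly $u(x) = \inf_\gamma \int_\gamma \rho$, the $\rho$-distance to $E$) which is close to $0$ on $E$ and close to $1$ on $F$, with $\rho$ as an upper gradient. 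Applying the Poincaré inequality on a chain of balls linking $E$ to $F$ — exploiting Ahlfors regularity to sum the telescoping terms as a geometric-type series in the exponent $Q$ — forces $\|\rho\|_Q$ to be bounded below, a contradiction. Making the chaining argument work with the correct exponent $Q$ (rather than a worse exponent) is precisely where Ahlfors $Q$-regularity is essential and where the argument is most technical. The full details are in \cite{HK} and \cite[Ch.~9]{He}; since the paper only needs this statement as a black box (indeed the authors explicitly say the reader ``may take this result to be the definition''), I would simply cite those sources rather than reproduce the proof.
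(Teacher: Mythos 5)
You correctly identify that the paper offers no proof of this result: it is cited from Heinonen--Koskela \cite{HK} and Heinonen's book \cite[Ch.~9]{He}, and the authors explicitly tell the reader they ``may take this result to be the definition of an Ahlfors $Q$-regular $Q$-Loewner space.'' Your decision to defer to those sources rather than reproduce the argument therefore matches the paper's own treatment, and in that sense the proposal is fine as a stand-in.

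Your supplemental sketch is worth one correction, though, because it glosses over the main subtlety in the reverse direction ($Q$-PI $\Rightarrow$ $Q$-Loewner). The chaining/telescoping argument you describe does \emph{not} close at the critical exponent: in an Ahlfors $Q$-regular space a $(1,Q)$-Poincar\'e inequality yields at best a borderline (BMO/exponential-integrability) modulus-of-continuity estimate, not the H\"older-type estimate one needs to force $\|\rho\|_{L^Q}$ away from zero when $u$ transitions from $0$ on $E$ to $1$ on $F$. What Heinonen and Koskela actually proved is that a $(1,p)$-Poincar\'e inequality for some $p < Q$ implies the Loewner property in a $Q$-regular space; the ``if and only if'' at exactly the exponent $Q$ additionally requires the Keith--Zhong self-improvement theorem \cite{KZ}, which upgrades a $(1,Q)$-Poincar\'e inequality to a $(1,p)$-Poincar\'e inequality for some $p < Q$ on a complete doubling space. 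The paper is explicitly aware of this: the same $p<Q$ caveat and the same Keith--Zhong reference appear in its discussion of Proposition~\ref{prop:quasiinv}. Your sketch instead attributes the convergence of the telescoping series to Ahlfors $Q$-regularity ``making the argument work with the correct exponent $Q$,'' which is not the mechanism; $Q$-regularity alone does not rescue the failure of the series at the endpoint, and the missing ingredient is precisely the self-improvement of the Poincar\'e exponent.
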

We will use the term \emph{$Q$-Loewner} to mean ``Ahlfors $Q$-regular and Loewner with exponent $Q$'' and \emph{Loewner} to mean $Q$-Loewner for some $Q>1$. Although it is possible to discuss the Loewner condition independent of Ahlfors regularity, we will never do so here. We emphasize: \textbf{in this paper, ``Loewner'' means Ahlfors $Q$-regular and Loewner with exponent $Q$, for some $Q>1$.}

A basic property that follows from either the $Q$-PI space or the $Q$-Loewner condition is quasiconvexity. A metric space is \emph{quasiconvex} if each pair of points can be joined by a curve whose length is bounded above by a fixed multiple of the distance between the points. See \cite[Ch. 8]{He} and \cite[Ch. 8]{HKST}.

 The next lemma is well known; see \cite[Lemma 4.3]{Bourdon2002}.   The push-forward of a Borel measure $\mu$ on $X$ by a Borel measurable mapping $f:X\to Y$ is given by $f_*(\mu)(A)=\mu(f^{-1}(A))$. If $\mu,\nu$ are two measures and $\mu$ is absolutely continuous with respect to $\nu$, we write $\mu \ll \nu$.

\begin{lemma}\label{lem:notPI}
Assume that $(X,d,\mu)$ is a PI space and that $f:X\to I$ is surjective and Lipschitz, where $I\subset \RR$ is an interval. Then,  $f_*(\mu) \ll \lambda$, where $\lambda$ denotes Lebesgue measure.
\end{lemma}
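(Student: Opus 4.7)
The plan is to argue by contradiction. Suppose there is a Borel set $A \subset I$ with $\lambda(A) = 0$ but $\mu(f^{-1}(A)) > 0$. By inner regularity and intersection with a ball, I may further assume $A$ is compact and $\mu(f^{-1}(A))$ is finite and attained with positive density on some ball $B_0 \subset X$; indeed, by the Lebesgue differentiation theorem (which holds in doubling spaces, and $\mu$ is doubling since $(X,d,\mu)$ is PI), there is a point $x_0 \in f^{-1}(A)$ at which the density of $f^{-1}(A)$ in balls centered at $x_0$ equals $1$, so $c := \mu(B_0 \cap f^{-1}(A))/\mu(B_0)$ can be made arbitrarily close to $1$ by choosing $B_0 = B(x_0,r)$ with $r$ small.

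The core of the proof is to apply the $1$-Poincar\'e inequality (which holds in every PI space, either directly or via a H\"older argument when the PI exponent is larger than $1$) to Lipschitz test functions adapted to $A$. A natural family is $u_n(x) = \min(1, n \cdot d_I(f(x), A))$, which is $(nL)$-Lipschitz, equals $0$ on $f^{-1}(A)$, and equals $1$ outside $f^{-1}(N_{1/n}(A))$. The left-hand side $\fint_{B_0} |u_n - (u_n)_{B_0}|\, d\mu$ tends to a positive constant comparable to $c(1-c)$ as $n \to \infty$, while the pointwise Lipschitz constant $\Lip[u_n]$ is supported in $f^{-1}(N_{1/n}(A) \setminus A)$, the preimage of a set of vanishing Lebesgue measure.

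The main obstacle is that the crude bound $\Lip[u_n] \leq nL$ causes the right-hand side of the Poincar\'e inequality to blow up unless one controls $\mu(f^{-1}(N_{1/n}(A)\setminus A))$ very carefully. To handle this, I would decompose $N_{1/n}(A)\setminus A$ into a pairwise disjoint countable union of open intervals $J_k = (a_k, b_k)$ with $\sum_k (b_k - a_k) < \varepsilon$ (using $\lambda(A)=0$) and use a refined cutoff $\sum_k \phi_k \circ f$ where each $\phi_k$ is a tent function adapted to the scale $|J_k|$. Applying the Poincar\'e inequality to this sum and carefully balancing the Lipschitz constants $\sim 1/|J_k|$ of each $\phi_k$ against the bound $\mu(f^{-1}(J_k)) \lesssim |J_k|\cdot(\text{something})$ provided by the PI structure, one obtains an estimate of the form $c(1-c) \leq C \cdot \varepsilon^{1/p}$. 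Sending $\varepsilon \to 0$ forces $c(1-c)=0$, contradicting the fact that $c$ can be chosen in $(0,1)$.

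An alternative, and arguably more conceptual approach, is to invoke a coarea-type inequality available in PI spaces (of Eilenberg or Haj\l asz--Koskela type), which would yield $\int_{f^{-1}(A)} \Lip[f]\, d\mu \lesssim \int_A \mathcal{H}^{\cdot}(f^{-1}(t))\, dt = 0$, forcing $\Lip[f] = 0$ $\mu$-a.e.\ on $f^{-1}(A)$, and then combining this with the PI structure to reach a contradiction with the surjectivity of $f$. I expect the proof in Bourdon's original reference to follow one of these two schemes; either way, the technical heart lies in exploiting the Poincar\'e inequality to convert the Lebesgue-negligibility of $A$ in the target into $\mu$-negligibility of $f^{-1}(A)$ in the source.
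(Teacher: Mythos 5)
You cannot be faulted for leaving the ``technical heart'' of both schemes open, because the statement you set out to prove is in fact false as printed, so no choice of test functions or coarea argument can close it. Take $X=[0,1]$ with the Euclidean metric and Lebesgue measure $\lambda$ (a $1$-PI space), let $K\subset[0,1]$ be a fat Cantor set (compact, nowhere dense, $\lambda(K)=1/2$), and set $f(x)=\lambda([0,x]\setminus K)$. Then $f$ is $1$-Lipschitz, strictly increasing, and surjective onto the interval $I=[0,1/2]$; since $f'=\chi_{[0,1]\setminus K}$ a.e., the compact set $A=f(K)$ satisfies $\lambda(A)\le\int_K|f'|\,d\lambda=0$, while $f_*\lambda(A)=\lambda(f^{-1}(A))=\lambda(K)=1/2>0$. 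So a Lipschitz surjection from a PI space onto an interval need not push $\mu$ forward to an absolutely continuous measure. Note also that the paper itself gives no proof to compare against: it cites Bourdon. The version of the lemma that is true, and that the application in Lemma \ref{lem:notloewner} actually requires, is the \emph{reverse} absolute continuity $\lambda\ll f_*\mu$ (equivalently, $f_*\mu$ cannot be singular with respect to $\lambda$): if $\mu(f^{-1}(E))=0$, then for $\operatorname{Mod}_p$-a.e.\ curve $\gamma$ the set $\gamma^{-1}(f^{-1}(E))$ has zero length, hence $\lambda(E\cap f(\operatorname{im}\gamma))=0$ because $f\circ\gamma$ is Lipschitz in arclength; since PI spaces are quasiconvex and curve families joining two sets of positive measure have positive $p$-modulus, one can choose such curves whose images under $f$ exhaust $I$, giving $\lambda(E)=0$. (Alternatively, the printed direction becomes true under stronger hypotheses on $f$, e.g.\ if $f$ is a Lipschitz quotient map, via Theorem \ref{thm:LQ} together with the known structure of PI measures on the line.)

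Concretely, here is where your two routes fail, and necessarily so. In the first scheme, the deferred estimate $\mu(f^{-1}(J_k))\lesssim|J_k|\cdot(\text{something})$ is not ``provided by the PI structure''; it is essentially a quantitative strengthening of the conclusion itself, so the argument is circular at the decisive point, and it is false in the example above: for $A=f(K)$ and any open $U\supset A$ with $\lambda(U)<\varepsilon$ one still has $\mu(f^{-1}(U))\ge\lambda(K)=1/2$, so $\sum_k|J_k|^{-p}\mu(f^{-1}(J_k))$ blows up instead of balancing, and the right-hand side of the Poincar\'e inequality never becomes small. The Poincar\'e inequality controls the oscillation of a function on $X$ by its gradient on $X$; it gives no upper bound for the $\mu$-measure of preimages of short intervals. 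In the second scheme, the Eilenberg/coarea-type inequality available in metric measure spaces goes the wrong way (it bounds $\int \mathcal{H}^{\mathrm{codim}\text{-}1}(f^{-1}(t)\cap E)\,dt$ from above by a constant times $\mu(E)$, not the other way around), and even granting the intermediate conclusion that $\Lip[f]=0$ $\mu$-a.e.\ on $f^{-1}(A)$, there is no contradiction with the PI property plus surjectivity: in the example $\Lip[f]=0$ a.e.\ on the positive-measure set $K=f^{-1}(A)$, yet $f$ is onto $[0,1/2]$. Only $\Lip[f]=0$ a.e.\ on all of $X$ would force $f$ to be constant. So both of your proposed arguments stall exactly at the steps you flagged, and the correct repair is to prove (and use) the reversed absolute continuity via the curve-family argument sketched above rather than the statement as printed.
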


We will apply in a few instances the following quasisymmetric invariance of the Loewner property.

\begin{proposition}\label{prop:quasiinv} Assume $Q>1$. Let $(X,d_X)$ and $(Y,d_Y)$ be Ahlfors $Q$-regular and quasisymmetrically equivalent. Then $X$ is $Q$-Loewner if and only if $Y$ is $Q$-Loewner.
\end{proposition}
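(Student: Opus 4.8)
The statement to prove is Proposition \ref{prop:quasiinv}: quasisymmetric invariance of the $Q$-Loewner property among Ahlfors $Q$-regular spaces.

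\textbf{Approach.} By Theorem \ref{thm:LoewnerPI}, for an Ahlfors $Q$-regular space, being $Q$-Loewner is equivalent to being a $Q$-PI space. So it would suffice to show that the $Q$-PI property is preserved under quasisymmetries between Ahlfors $Q$-regular spaces. However, the cleanest route is to invoke the classical result of Heinonen--Koskela (the same source as Theorem \ref{thm:LoewnerPI}, namely \cite{HK}; see also \cite[Ch.~9]{He} and \cite[Thm.~1.2]{Tyson}), which states precisely that quasisymmetries are quasiconformal on $Q$-Loewner spaces and that quasiconformal images of $Q$-Loewner spaces that are Ahlfors $Q$-regular are again $Q$-Loewner. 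So the plan is: first reduce to showing that a quasisymmetry $f\colon X \to Y$ is quasiconformal (in the metric sense), then apply the Heinonen--Koskela machinery relating $Q$-regularity, $Q$-modulus, and the Loewner condition.

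\textbf{Key steps.} First I would recall that, since $X$ is $Q$-Loewner hence $Q$-PI, it is in particular a connected, quasiconvex, Ahlfors $Q$-regular metric space, and on such spaces every quasisymmetry is quasiconformal; this is standard (\cite[Thm.~1.11]{HK}, and see \cite[Ch.~9]{He}). Next, the core analytic input is the comparison of $Q$-modulus under quasiconformal maps: on $Q$-Loewner spaces, the modulus of a curve family is comparable, up to the Loewner function and the quasiconformality data, to a geometric quantity (the relative distance of the connecting continua) that is manifestly a quasisymmetry invariant. Thus $f$ transports the Loewner-type modulus lower bounds on $X$ to Loewner-type modulus lower bounds on $Y$: given disjoint nondegenerate continua $E, F \subset Y$ with $\Delta(E,F) \le t$, pull them back by $f^{-1}$ to continua in $X$ with $\Delta(f^{-1}(E), f^{-1}(F))$ controlled by $\eta$ (using that both spaces are bounded-turning / quasiconvex and $Q$-regular, so relative distance is distorted in a controlled way), apply the Loewner estimate on $X$, and push the curve family forward, using Ahlfors $Q$-regularity of both spaces together with $Q$-quasiconformality to compare $\mathrm{mod}_Q$ on $X$ and $Y$. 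By symmetry (the inverse of an $\eta$-quasisymmetry is an $\eta_1$-quasisymmetry, as noted after \eqref{eq:invquas}), the converse direction is identical, giving the "if and only if".

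\textbf{Main obstacle.} The delicate point is the modulus comparison under $f$: a priori, quasisymmetric maps need not preserve $Q$-modulus on general Ahlfors $Q$-regular spaces, and one genuinely needs that \emph{one} of the two spaces is already $Q$-Loewner (equivalently $Q$-PI) to run the Heinonen--Koskela argument — the Loewner/PI hypothesis is what upgrades the metric quasisymmetry to a quantitatively controlled $Q$-quasiconformal map with two-sided modulus bounds. This is precisely the content of \cite[Section~4]{Tyson} and the discussion in \cite[Ch.~9--10]{He}, so rather than reproving it I would cite it. A reader wishing to avoid modulus entirely can instead argue via the PI characterization: an $\eta$-quasisymmetry $f$ between Ahlfors $Q$-regular spaces with $X$ a $Q$-PI space is, by \cite{HKST, Tyson}, absolutely continuous in the appropriate sense with Jacobian comparable to $(\mathrm{Lip} f)^Q$, and this lets one transfer the $(1,Q)$-Poincar\'e inequality from $X$ to $Y$; then Theorem \ref{thm:LoewnerPI} returns the Loewner property on $Y$. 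Either way, the substantive work is already in the cited literature, so the proof is essentially an assembly of Theorem \ref{thm:LoewnerPI} with the standard quasiconformal-invariance theorem for Loewner spaces.
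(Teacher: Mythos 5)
Your overall strategy---treat this as a known consequence of the Heinonen--Koskela theory of quasiconformal maps in metric spaces, and cite the relevant literature (HK, Tyson, HKST)---is the same route the paper takes. However, there is a hidden gap in the logical chain that the paper is careful to flag and you do not. The Heinonen--Koskela quasisymmetric invariance theorem (\cite[Theorem 8.5]{HK}) is proved under the hypothesis that $X$ supports a $p$-Poincar\'e inequality for some exponent $p$ \emph{strictly less than} $Q$. The $Q$-Loewner assumption on $X$, via Theorem \ref{thm:LoewnerPI}, only hands you the borderline $Q$-Poincar\'e inequality, so \cite{HK} does not apply out of the box. The paper bridges this by invoking Keith--Zhong self-improvement \cite{KZ}, which shows that a $Q$-PI space is automatically a $p$-PI space for some $p<Q$; alternatively it cites Tyson's direct argument \cite[Corollary 1.6]{TQuasi}, which avoids self-improvement. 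You do reference Tyson, so your proposal could be repaired simply by leaning explicitly on that citation (or on \cite{KZ}); but as written, your main thread assumes the Heinonen--Koskela modulus machinery applies directly under a $Q$-Loewner hypothesis, and that step is exactly where the extra input is needed.
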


This proposition was established in \cite[Theorem 8.5]{HK} under a slightly stronger assumption on $X$. Specifically, the authors used a $p$-Poincar\'e inequality for $p<Q$. This assumption was removed by the self-improvement of the Poincar\'e inequality \cite{KZ}, where the main result showed that a $Q$-Poincar\'e inequality already implies a $p$-Poincar\'e inequality for some $p<Q$. An alternate direct proof, which bypasses the need for self-improvement, was discovered by Tyson; see \cite[Corollary 1.6]{TQuasi}.

\subsection{Cheeger's theory and Gromov--Hausdorff blowups}\label{subsec:prelimcheeger}

A deep result of Cheeger \cite{Ch} says that all PI spaces admit a form of differentiable structure. For the following definitions, we follow the phrasing of Bate \cite{Bate}.

If $(X,d,\mu)$ is a metric measure space, then an \emph{$n$-dimensional chart} in $X$ is a pair $(U,\phi)$, where $U\subseteq X$ and $\phi\colon X\rightarrow\mathbb{R}^n$ is Lipschitz. A Lipschitz function $f\colon X \rightarrow\RR$ is said to be \emph{differentiable at $x_0\in U$ with respect to $(U,\phi)$} if there is a unique $Df(x_0)\in\RR^n$ such that
$$ \limsup_{x\rightarrow x_0} \frac{|f(x)-f(x_0)-Df(x_0)\cdot (\phi(x)-\phi(x_0))|}{d(x,x_0)} = 0.$$
(Thus, $f$ looks to first order at $x_0$ like a linear function composed with $\phi$.)

The space $(X,d,\mu)$ is a \emph{Lipschitz differentiability space} if there are countably many charts $(U_i, \phi_i\colon X \rightarrow\RR^{n_i})$ of dimensions $n_i$ such that
$$ \mu\left(X\setminus \cup_i U_i\right) =0$$
and such that every Lipschitz function $f\colon X \rightarrow \RR$ is differentiable at $\mu$-almost every point of every chart $U_i$.

The supremum of the dimensions $n_i$ of the charts is called the \textit{analytic dimension} of the Lipschitz differentiability space. The analytic dimension of a PI space is always at least $1$ \cite[Section 3.3]{KeithLD}.

The main result of \cite{Ch} is the following; see \cite[Theorem 4.38]{Ch}. There are now also a number of alternative proofs or expositions of the result; see \cite{KeithLD},  \cite{KleinerMackay}, \cite[Ch. 13]{HKST}, or \cite[Section 10]{Bate}.
\begin{theorem}[Cheeger]\label{thm:cheeger}
Each PI space is a Lipschitz differentiability space. The analytic dimension of the PI space can be bounded above depending only on the doubling and Poincar\'e constants of the space.
\end{theorem}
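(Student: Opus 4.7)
The plan is to follow Keith's approach via the ``Lip-lip'' inequality, which gives the cleanest route to both halves of the theorem. Write $\mathrm{lip}[f](x) := \liminf_{r\to 0}\sup_{y\in B(x,r)}|f(y)-f(x)|/r$ for the lower pointwise Lipschitz constant. The first and most fundamental step is to prove: in a $p$-PI space $(X,d,\mu)$ there exists a constant $K$, depending only on the doubling and Poincar\'e constants, such that $\Lip[f](x) \leq K\,\mathrm{lip}[f](x)$ for $\mu$-a.e.\ $x$ and every Lipschitz $f:X\to\RR$. This is proved by combining the Poincar\'e inequality, which bounds the averaged oscillation of $f$ on a ball by an averaged pointwise Lipschitz constant, with a Vitali covering / density-point argument applied to the ``bad set'' where $\Lip[f] > K\,\mathrm{lip}[f]$: at a density point of the bad set one zooms in and uses that $\mathrm{lip}[f]$ controls the local behavior of $f$, while the PI inequality controls the averaged behavior, forcing a contradiction for $K$ large.

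With the Lip-lip inequality in hand, the chart construction is essentially formal. Given a finite tuple $\phi = (\phi_1, \dots, \phi_n)$ of Lipschitz functions and another Lipschitz $f$, one shows that $f$ is differentiable at $x$ with respect to the chart $\phi$ with derivative $a\in\RR^n$ if and only if $\mathrm{lip}[f - a\cdot\phi](x) = 0$; the Lip-lip inequality upgrades this vanishing of $\mathrm{lip}$ to vanishing of $\Lip$, hence to a genuine first-order approximation. One then takes a countable dense family $\{f_k\}$ of Lipschitz functions on balls of $X$ and builds charts inductively: extend the current tuple $\phi$ by adjoining some $f_k$ whenever $f_k$ fails to be $\phi$-differentiable on a set of positive measure, restricting to a subset on which the ``missing derivative'' is controlled. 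A standard measurable selection / exhaustion argument yields a countable family of charts that together cover $\mu$-almost all of $X$.

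The main obstacle, and the only step that does real work beyond the Lip-lip inequality, is the uniform bound on the dimension. One must show that the exhaustion above terminates at a uniform $n$: if $(\phi_1,\dots,\phi_n)$ is ``infinitesimally independent'' at $x$, meaning there is $\delta>0$ with $\mathrm{lip}[c\cdot\phi](x)\geq \delta|c|$ for all $c\in\RR^n$, then $n\leq N$ for some $N$ depending only on the doubling and PI constants. The idea is that at infinitesimal scales $\phi$ maps balls around $x$ into $\RR^n$ in a non-degenerate way, so preimages of small disjoint balls in $\RR^n$ pull back to disjoint sets in $X$ with measure controlled by doubling, while the PI inequality controls how much the image can spread. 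Carefully tracking these two effects, one obtains $n\leq N$. I expect this measure-theoretic counting argument to be the hardest part to make precise while keeping the constants effective; the alternative is Cheeger's original route via asymptotic generalized linear functions and iterated blow-up, which moves the difficulty into an inductive dimension reduction but retains the same underlying obstruction.
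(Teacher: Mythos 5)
First, note that the paper does not prove this statement at all: it is quoted as Cheeger's theorem, with the proof deferred to \cite{Ch} (Theorem 4.38 there) and to the alternative treatments of Keith, Kleiner--Mackay, HKST, and Bate. So the comparison here is really with the literature, and your outline is recognizably Keith's route via the Lip-lip inequality, which is indeed one of the cited proofs. At the level of an outline, the shape is right: an a.e.\ inequality $\Lip[f]\leq K\,\mathrm{lip}[f]$ with $K$ depending only on the doubling and Poincar\'e constants, a chart construction by exhaustion over a countable family of Lipschitz functions, and a finite-dimensionality statement that caps the number of chart coordinates.

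However, as a proof there are genuine gaps, and they sit exactly at the two places where all the known arguments do real work. (1) The Lip-lip inequality is not obtained by a Vitali/density-point argument on a bad set in the way you describe: $\mathrm{lip}$ is only a liminf, so it gives information along a sparse sequence of scales, and Keith's proof has to propagate the Poincar\'e estimate across \emph{all} intermediate scales via maximal-function and telescoping arguments; the a.e.\ statement for a fixed $f$, and then simultaneously for the countable family $f-a\cdot\phi$ over a dense set of $a\in\RR^n$ (needed to make your claimed equivalence ``differentiable at $x$ iff $\mathrm{lip}[f-a\cdot\phi](x)=0$'' legitimate, together with an independence argument for uniqueness of $a$), is itself a substantial theorem, not a routine covering argument. (2) More seriously, the dimension bound as you sketch it would fail: from $\mathrm{lip}[c\cdot\phi](x)\geq\delta|c|$ for all $c$ you only get, at each small scale $r$, for each direction $c$ \emph{some} witness $y\in B(x,r)$ with $|c\cdot(\phi(y)-\phi(x))|\gtrsim\delta r$, and a single witness point can serve an entire hemisphere of directions, so ``preimages of disjoint balls in $\RR^n$ pull back to disjoint sets whose number is controlled by doubling'' does not follow; nor is there a precise mechanism by which the Poincar\'e inequality ``controls how much the image can spread.'' The actual finite-dimensionality arguments (Cheeger's via generalized linear functions and blow-ups, Keith's combinatorial argument, Schioppa's bound by Assouad dimension, Bate's via Alberti representations) are considerably more delicate. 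So your proposal is a correct map of one known route, but the two pivotal lemmas are asserted with heuristics rather than proofs, and the heuristic offered for the dimension bound is not salvageable as stated.
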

In fact, Schioppa showed that the analytic dimension can be bounded by the so called Assouad dimension of the space \cite{Schioppa}. It follows from the resolution of Cheeger's conjecture in \cite{PMR} that the analytic dimension can also be bounded by the Hausdorff dimension of the space. For Ahlfors $Q$-regular spaces, $Q$ is equal to both the Assouad and Hausdorff dimensions.

We require various ``blowup'' arguments in the paper. In most of them, we use a simple ad hoc notion of blowup using the self-similar structure of our example; see Section \ref{sec:blowup}. However, in some places we need the notion of a (pointed, measured) Gromov--Hausdorff blowup, which relies on the notions of pointed measured Gromov--Hausdorff convergence. See \cite[Ch. 8]{DS} or \cite[Ch. 11]{HKST} for definitions and background.

If $(X,d)$ is a metric space and $x\in X$, then a \emph{Gromov--Hausdorff blowup} of $(X,d)$ at $x$ is any pointed Gromov--Hausdorff limit of a sequence of pointed metric spaces of the form $(X, \lambda_n^{-1}d, x)$ where $\lambda_n\rightarrow 0$. If $\mu$ is a measure on $(X,d)$ and in addition the sequence $(X, \lambda_n^{-1}d, \sigma_n^{-1}\mu, x)$ converges in the pointed measured Gromov--Hausdorff sense (for some $\sigma_n\rightarrow 0$) then the limit is a \emph{Gromov--Hausdorff blowup} of $(X,d,\mu)$ at $x$.

If $(X,d,\mu)$ is a PI space, then it admits Gromov--Hausdorff blowups at every point $x\in X$. In fact, if one starts with any given subsequence $\lambda_n$ as above, one can choose the $\sigma_n$ so that a subsequence of $(X, \lambda_n^{-1}d, \sigma_n^{-1}\mu, x)$ converges. Finally, a result of Cheeger \cite[Section 9]{Ch} (see also \cite[Ch. 11]{HKST}) implies that the resulting Gromov--Hausdorff blowups will themselves be PI spaces.

The proof of our main theorem requires a result about PI spaces whose Gromov--Hausdorff blowups are topological planes. It is a consequence of work of Kleiner and the first author \cite[Theorem 1.3 and Remark 1.4]{DavidKleiner}, combined with that of Bate \cite[Theorem 6.6]{Bate}.
\begin{theorem}\label{thm:davidkleiner}
Let $(X,d,\mu)$ be a PI space. Suppose that at $\mu$-a.e. $x\in X$, each Gromov--Hausdorff blowup of $X$ at $x$ is homeomorphic to $\mathbb{R}^2$.

Then the analytic dimension of $X$ is at most $2$. Furthermore, if $U$ is a chart of analytic dimension $2$, then $\mu|_U<<\mathcal{H}^2$ and $\mu|_U$ is $2$-rectifiable, meaning that $U$ is covered up to $\mu$-measure zero by Lipschitz images of subsets of $\RR^2$.
\end{theorem}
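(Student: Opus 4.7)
The plan is to combine the two cited ingredients essentially as a black box. The first ingredient is Theorem 1.3 and Remark 1.4 of David--Kleiner \cite{DavidKleiner}. That result is stated precisely for PI spaces $(X,d,\mu)$ whose Gromov--Hausdorff blowups at $\mu$-a.e.\ point are homeomorphic to $\RR^2$, which is exactly our hypothesis. Its main conclusion directly yields the analytic dimension bound of at most $2$: the heuristic reason is that by Cheeger's theorem, a chart of analytic dimension $n$ induces a Lipschitz map to $\RR^n$ that in tangent spaces becomes a Lipschitz quotient/submersion; topological constraints on a space homeomorphic to $\RR^2$ then forbid such a map for $n>2$. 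So the first assertion of the theorem is immediate from \cite[Theorem 1.3]{DavidKleiner}.

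For the stronger conclusion on a chart $U$ of analytic dimension $2$ with chart map $\phi:U \to \RR^2$, I would combine \cite[Remark 1.4]{DavidKleiner} with \cite[Theorem 6.6]{Bate}. The David--Kleiner result additionally provides that at $\mu$-a.e.\ $x \in U$ the chart map $\phi$ behaves, after post-composition with an appropriate linear map, as a bi-Lipschitz equivalence between any Gromov--Hausdorff blowup of $(X,d,\mu)$ at $x$ and $(\RR^2,|\cdot|, c\HH^2)$ for a positive constant $c = c(x)$. In particular $\phi$ is infinitesimally bi-Lipschitz $\mu$-almost everywhere on $U$. Bate's Theorem 6.6 is precisely the statement that converts this pointwise blowup condition on the chart map into an actual rectifiability statement: the a.e.\ infinitesimal bi-Lipschitz behavior of $\phi$ implies both $\mu|_U \ll \HH^2$ and that $U$ is covered, up to a $\mu$-null set, by countably many Lipschitz images of subsets of $\RR^2$.

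The main obstacle is not internal to this argument but lies in invoking the two deep external results correctly. The substantive content of \cite[Theorem 1.3]{DavidKleiner} is the interaction between topological planarity of blowups and Cheeger's analytic structure, which combines his differentiability theory with topological rigidity. Bate's \cite[Theorem 6.6]{Bate} requires a delicate covering argument to upgrade a pointwise blowup condition on the chart map to a global rectifiability statement, using the structure theory (via Alberti representations) of Lipschitz differentiability spaces. The only check I would carry out carefully is that the hypotheses of each theorem are met verbatim in our setting; no new internal argument is needed.
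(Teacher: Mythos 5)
Your proposal matches the paper exactly: the paper gives no internal proof of this statement, but derives it precisely by citing \cite[Theorem 1.3 and Remark 1.4]{DavidKleiner} for the dimension bound and the infinitesimal structure of a $2$-dimensional chart, combined with \cite[Theorem 6.6]{Bate} to convert that blowup information into absolute continuity and $2$-rectifiability. Your outline of how the two external results interlock is consistent with the paper's intended use of them, so no further argument is needed.
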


\subsection{Metric iterated function systems}

We will need some basic definitions and properties of iterated function systems. For the classical theory, see \cite{falconer,hutchinson} and for the metric version see \cite{stella}.

\begin{definition}\label{def:IFS} Let $(X,d)$ be a complete metric space. A \emph{self-similar iterated function system (IFS)} on $X$ is a collection of mappings $\cF=\{F_1, \dots, F_n:X\to X\}$ that are contractive self-similarities: there exist constants $\lambda_i\in (0,1)$ such that
\[
d(F_i(x),F_i(y))=\lambda_i d(x,y), \forall x,y\in X.
\]

The \emph{attractor} of the IFS is the unique compact set $K\subset X$ for which 
\[
K=\bigcup_{i=1}^n F_i(K).
\]

The \emph{similarity dimension}, $\Sdim(\cF)$, of the IFS is equal to the value $s>0$ for which
\[
\sum_{i=1}^n \lambda_i^s = 1.
\]

We say that $\cF$ satisfies the \emph{open set condition} if there exists a non-empty bounded open set $O$ for which 
\[
F_i(O)\cap F_j(O)=\emptyset \text{ whenever } i\neq j
\]
and $F_i(O)\subset O$ for all $i\in\{1,\dots, n\}$.
\end{definition}

The following theorem is well-known, and was shown in \cite{stella}.
\begin{theorem}\label{thm:IFS} If $(X,d)$ is a complete metric space and $\cF$ is an IFS on $X$, then the attractor $K$ of the IFS exists, is unique, and satisfies $\Hdim(K)\leq \Sdim(\cF)$.

If $\cF$ further satisfies the open set condition, then $\Sdim(K)=\Hdim(K)$. Further, for $p=\Hdim(K)$, there exists a constant $C>0$ for which we have 
\[
C^{-1} r^{p} \leq \HH^p(B(x,r)) \leq C r^p
\]
for all $r \in (0,2\diam(X)].$ In particular, $K$ is Ahlfors $p$-regular.
\end{theorem}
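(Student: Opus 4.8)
The final statement to prove is Theorem \ref{thm:IFS}, concerning the existence, uniqueness, and dimension bounds for attractors of metric iterated function systems.

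\medskip

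The plan is to follow the classical Hutchinson approach, adapted to the abstract metric setting, with the key tool being the Hutchinson operator on the space of nonempty compact subsets. First I would define, for a self-similar IFS $\cF = \{F_1,\dots,F_n\}$ on a complete metric space $(X,d)$, the map $\Phi(A) = \bigcup_{i=1}^n F_i(A)$ acting on the space $\mathcal{K}(X)$ of nonempty compact subsets of $X$ equipped with the Hausdorff distance $d_H$. One checks that $(\mathcal{K}(X), d_H)$ is complete (since $(X,d)$ is complete) and that $\Phi$ is a contraction with ratio $\max_i \lambda_i < 1$; the Banach fixed point theorem then yields a unique compact $K$ with $\Phi(K) = K$, i.e. $K = \bigcup_i F_i(K)$, which is the attractor. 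This part is routine and standard.

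\medskip

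For the upper bound $\Hdim(K) \le \Sdim(\cF) = s$, I would use the natural covering of $K$ by the ``cylinder'' sets $F_w(K) := F_{w_1}\circ\cdots\circ F_{w_k}(K)$ over words $w \in \{1,\dots,n\}^k$. Since $\diam(F_w(K)) = \lambda_{w_1}\cdots\lambda_{w_k}\diam(K)$ and $\sum_{w \in \{1,\dots,n\}^k} (\lambda_{w_1}\cdots\lambda_{w_k})^s = \left(\sum_{i=1}^n \lambda_i^s\right)^k = 1$ by the defining equation for $s$, the sum $\sum_w \diam(F_w(K))^s = \diam(K)^s$ is bounded uniformly in $k$, while the mesh of the cover tends to $0$ as $k\to\infty$. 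This shows $\mathcal{H}^s(K) \le \diam(K)^s < \infty$, hence $\Hdim(K) \le s$. For the lower bound and Ahlfors regularity under the open set condition, I would invoke the reference \cite{stella} directly — since the excerpt explicitly attributes Theorem \ref{thm:IFS} to that paper, I may simply cite it for the open set condition portion; but if a self-contained argument is wanted, one shows (again following \cite{hutchinson,falconer}, now in the metric setting) that the open set condition forces a uniform bounded-overlap property for the cylinders meeting a given ball, yielding the mass distribution estimate $\mathcal{H}^p(B(x,r)) \gtrsim r^p$ and thus both $\Sdim(K) = \Hdim(K) = p$ and Ahlfors $p$-regularity.

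\medskip

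The main obstacle is the lower Ahlfors-regularity estimate under the open set condition. In $\RR^n$ the standard proof of this uses a volume/packing argument: the sets $F_w(O)$ are disjoint, open, of comparable size, and all lie near $x$, so a Lebesgue-measure (volume) bound on $\RR^n$ limits how many of them can cluster near a point, giving the bounded-overlap control. In a general complete metric space there is no ambient reference measure, so this argument must be replaced; the resolution in \cite{stella} (which I would cite rather than reprove) is that one can still extract the bounded-overlap property intrinsically, essentially because the attractor itself, being $s$-dimensional from above, provides enough ``room'' — one runs a doubling-type argument on $K$ using the upper bound $\mathcal{H}^s(K) < \infty$ already established. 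Everything else — completeness of $(\mathcal{K}(X), d_H)$, the contraction estimate, the cylinder covering for the upper bound — is a routine transcription of the Euclidean theory.
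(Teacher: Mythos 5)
The paper gives no proof of this theorem at all: it simply states the result as ``well-known'' and cites \cite{stella}. Your proposal is consistent with this and in fact supplies more detail than the paper does — your sketch of the Hutchinson-operator argument for existence/uniqueness and of the cylinder-covering argument for $\Hdim(K)\leq \Sdim(\cF)$ is correct and standard, and your decision to defer to \cite{stella} for the genuinely hard point (Ahlfors regularity of the attractor under the open set condition in a general complete metric space, where the Euclidean volume/packing argument is unavailable) matches exactly what the paper itself does.
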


\section{Substitution complexes and the ``pillow space''}\label{sec:cubical}
In this section, our goal is to precisely define the ``pillow space'' $(X,d_0)$ defined informally in the introduction, and to introduce some notation related to this space that will be used in the rest of the paper.

To do so, we develop some terminology about \emph{cubical complexes} and \emph{pullbacks}. While abstract and technical in some aspects, we hope this framework will be useful for future constructions. This approach also has the advantage of explicating the naturality of many of the constructions we do, and gives us a coordinate-independent way of describing the spaces we are working with. Indeed, we will express each stage $X_n$ of the construction as a categorical pullback. This categorical perspective is inspired by the subdivision rules from \cite{CFP}.

The framework for constructing the pillow space is different from \cite[Example 4.3]{AEBb}. We thus end the section by giving an outline of how the results in \cite{AEBb} give a proof of Theorem \ref{thm:AEB}.
\subsection{Cubical complexes}

The stages in our construction will be $2$-dimensional cubical complexes. We fix the following definition for our purposes:

\begin{definition}
A \emph{cubical complex} is a metric space $X$ together with data $(\cC, l,d)$, where $\cC$ is a finite collection of mappings $p:[0,l]^d\to X$. Write $Q_0=[0,l]^d$. We assume the following properties: 
\begin{enumerate}[(i)]
    \item The space $X= \bigcup_{p\in \cC} p(Q_0)$ and each $p$ is a homeomorphism onto its image in $X$.
    \item If  $p(Q_0)\cap p'(Q_0)\neq \emptyset$, then there are faces $S,S'$ of $Q_0$ such that $p(Q_0)\cap p'(Q_0)=p(S)=p'(S')$ and an isometry $f:Q_0\to Q_0$ such that $p'\circ f|_S=p|_S$.
    \item The metric on $X$ is given by
\begin{equation}\label{eq:cubemetric}
d(x,y)=\inf \{\sum_{i=0}^{n-1} |p_i^{-1}(x_i) - p_i^{-1}(x_{i+1})| : x_0 = x, x_n = y, \text{ and } \forall i, p_i\in\cC \text{ with }x_i,x_{i+1}\in p_{i}(Q_0)\}.
\end{equation}

\end{enumerate} 
\end{definition}

In particular, all our cubes are the same dimension, we assume that the images of interiors of distinct cubes are disjoint, and each $p_i(Q_0)$ is isometric to $Q_0$. Thus, each mapping $p\in\cC$ can be identified with its image in $X$; we will often abuse notation and consider $\cC = \{Q=p(Q_0):p\in\cC\}$. (We could also think of $X$ as a CW complex, but we will avoid using that terminology in this paper.) If $W$ is a metric space with a cubical structure, we sometimes denote by $(\cC_W, l_W)$ the data of the cubical structure.

A sequence of points $(x_i)_{i=0}^n$ as in \eqref{eq:cubemetric} is called an \emph{admissible chain} in $X$. Since there are finitely many cubes in the cubical complex, it is clear that any cubical complex $X$ in this sense is compact.

Suppose that $f:X\to Y$ is a continuous mapping between two cubical complexes, $X$ with data $(\cC,l,d)$ and $Y$ with data $(\cC',l,d)$. We say that $f$ is \emph{cubical} if for each $Q\in\cC$, there is a cube $Q'\in\cC'$ such that $f|_Q$ is an isometry onto $Q'$.  

The following lemma is then easy.
\begin{lemma} \label{lem:cubicallemma} Let $X,Y$ be two cubical complexes and let $f\colon X \rightarrow Y$ be a cubical map. Then $f$ is $1$-Lipschitz. If in addition $f$ has a cubical left inverse, then $f$ is an isometry.
\end{lemma}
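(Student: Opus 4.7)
The plan is to transport admissible chains from $X$ to $Y$ (and back, in the second part) using the cubical hypothesis, and then to read off both conclusions directly from the infimum definition \eqref{eq:cubemetric}.

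For the $1$-Lipschitz assertion, I would fix $x,y\in X$ and let $(x_i)_{i=0}^n$ be an admissible chain from $x$ to $y$, with witnessing maps $p_i\in\cC$ satisfying $x_i,x_{i+1}\in p_i(Q_0)$. Since $f$ is cubical, each restriction $f|_{p_i(Q_0)}$ is an isometry onto some $p'_i(Q_0)$ with $p'_i\in\cC'$, so there is an isometry $\phi_i\colon Q_0\to Q_0$ with $f\circ p_i = p'_i\circ\phi_i$. Then $(f(x_i))_{i=0}^n$ is an admissible chain in $Y$ from $f(x)$ to $f(y)$, and each summand in \eqref{eq:cubemetric} is preserved:
\[
|(p'_i)^{-1}(f(x_i))-(p'_i)^{-1}(f(x_{i+1}))|=|\phi_i(p_i^{-1}(x_i))-\phi_i(p_i^{-1}(x_{i+1}))|=|p_i^{-1}(x_i)-p_i^{-1}(x_{i+1})|.
\]
Taking the infimum over admissible chains yields $d'(f(x),f(y))\leq d(x,y)$.

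For the isometry claim, let $g\colon Y\to X$ be a cubical left inverse, so that $g\circ f=\mathrm{id}_X$. Applying the first part to $g$ shows that $g$ is also $1$-Lipschitz, whence for all $x,y\in X$,
\[
d(x,y)=d(g(f(x)),g(f(y)))\leq d'(f(x),f(y))\leq d(x,y),
\]
forcing equality throughout.

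I do not foresee any substantive obstacle. The only point worth checking is that the condition ``$x_i,x_{i+1}$ both lie in a common cube of $\cC$'' is transported by $f$ to ``$f(x_i),f(x_{i+1})$ both lie in a common cube of $\cC'$'' (so that the image chain is admissible), and that the Euclidean distances measured in the parameter cube $Q_0$ are preserved by the isometries $\phi_i$. Both statements are immediate from the hypothesis that $f|_{p_i(Q_0)}$ is an isometry onto an element of $\cC'$.
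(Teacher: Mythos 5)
Your proposal is correct and follows essentially the same route as the paper's proof: push admissible chains forward under $f$ (using that $f$ is an isometry on each cube) to get the $1$-Lipschitz bound from \eqref{eq:cubemetric}, then apply that bound to the cubical left inverse $g$ and combine $d(x,y)=d(g(f(x)),g(f(y)))\leq d(f(x),f(y))\leq d(x,y)$. Your write-up simply makes explicit the chain-preservation step that the paper calls immediate.
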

\begin{proof}
It is immediate from the assumptions that if $(x_i)$ is an admissible chain from $x$ to $y$ in $X$, then $f(x_i)$ is an admissible chain from $f(x)$ to $f(y)$ in $Y$. Since in addition $f$ is an isometry on each cube, we have immediately from the definition of the metric in \eqref{eq:cubemetric} that $f$ is $1$-Lipschitz. 

Next, suppose that a cubical map $g:Y\to X$ is a left inverse of $f$. Let $x,y\in X$. We know that $d(f(x),f(y))\leq d(x,y)$. To show the opposite inequality, we observe that $g$ is $1$-Lipschitz by the first part of the proof, and hence $d(x,y)=d(g(f(x)), g(f(y)))\leq d(f(x),f(y))$.
\end{proof}

Given a cubical complex $X$ with data $(\cC,l,d)$ and $L>0$, we may \emph{expand $X$ by a factor of $L$} simply by expanding each cube by a factor of $L$  (with the metric scaling accordingly). More precisely, we replace the cube $Q_0=[0,l]^d$ by $[0,Ll]^d$, but otherwise use the same collection $\cC$ of characteristic mappings.

We may also wish to subdivide a cubical complex:

\begin{definition}     Given a metric space $X$ together with a cubical complex structure $(\cC,l)$, the \emph{subdivision} of the cubical complex is a cubical complex structure $(\cC^L, l^L\colon=l/L)$ obtained by simply subdividing each cube of $\cC$ into $L^d$ identical sub-cubes.
    
    More precisely, set $Q_0^L = [0,l/L]^d$ and divide $Q_0$ into $L^d$ subsquares isometric to $Q_0^L$.  The characteristic maps $p^L\in \cC^L$ are obtained by restricting each $p\in C$ to one of the copies of $Q_0^L$ in $Q_0$ and pre-composing by the unique translation that gives it domain $Q_0^L$. 
\end{definition}

A subdivision does not alter the metric on the underlying metric given space of the cube complex, but simply the structure of the space as a cubical complex.

\subsection{Substitution complexes}

We now describe a framework for building new cubical complexes out of old ones.

\begin{definition}\label{def:substitutioncomplex} Fix $d\in\mathbb{N}$, $l>0$ and $L\in \N$. A \emph{substitution complex} is a tuple $(X,Y, Z,S, \pi)$, where $X$, $Y$, and $Z$ are cubical complexes of dimension $d$ and side lengths $l$, $l$, and $Ll$, respectively, and $S$ and $\pi$ are mappings that satisfy the following conditions.
\begin{itemize}
\item $S\colon X \to Z$ is continuous, open, surjective, and cubical from the cubical structure on $X$ obtained after expanding by a factor of $L$. (In particular, $S$ is  $L$-Lipschitz in the original metric on $X$ by Lemma \ref{lem:cubicallemma}.) 
\item  $\pi:Y\to Z$  is continuous, surjective, and cubical when $Z$ is equipped with the cubical complex structure subdivided by $L$. (In particular, $\pi$ is $1$-Lipschitz by Lemma \ref{lem:cubicallemma}.)
\end{itemize}
\end{definition}

Given this data, we can form a new topological space as the pullback of the following diagram.

\[\begin{tikzcd}
	X && Y \\
	& Z
	\arrow["S", from=1-1, to=2-2]
	\arrow["\pi"', from=1-3, to=2-2]
\end{tikzcd}\]
Namely, we define a space $P(X,Y)$ together with maps $\tilde{\pi}, \tilde{S}$ as follows.
\[\begin{tikzcd}
	& {P(X,Y)} \\
	{X} && {Y} \\
	& Z
	\arrow["{S}"', from=2-1, to=3-2]
	\arrow["{\pi}", from=2-3, to=3-2]
	\arrow["{\tilde{\pi}}"', from=1-2, to=2-1]
	\arrow["{\tilde{S}}", from=1-2, to=2-3]
\end{tikzcd}\]
The maps $\tilde{\pi}, \tilde{S}$ are called the pullback maps, and the space $P(X,Y)$ is called the \emph{pullback} of the substitution complex. (Although the pullback depends on all five elements of $(X,Y,Z,S,\pi)$, in our applications below typically $Z,S,\pi$ will be clear from context and only $X$ and $Y$ will change, hence the notation $P(X,Y)$.) The pullback can be described topologically as
\[
P(X,Y)=\{(x,y)\in X\times Y: S(x)=\pi(y)\},
\]
and it is equipped with the subspace topology from the product space $X\times Y$. The natural projection maps give our mappings $\tilde{S}(x,y)=y$ and $\tilde{\pi}(x,y)=x$. 

\begin{lemma}\label{lem:cubicallimit} If $(X,Y,Z,S, \pi)$ is a substitution complex, then the pullback $P(X,Y)$ is homeomorphic to a metric space that has the structure of a cubical complex, and  $\tilde{S}$ is cubical and $\tilde{\pi}$ is cubical with respect to the subdivision of $X$ by a factor $L$.
\end{lemma}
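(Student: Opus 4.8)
The plan is to build an explicit cubical complex structure on $P(X,Y)$ by taking the product of the cube structures on $X$ (after expanding by $L$) and $Y$, and showing that the pullback is exactly swept out by the cubes of $X$ (expanded by $L$) lying over cubes of $Z$. Concretely: since $S$ is cubical from the $L$-expanded structure on $X$ to $Z$, each expanded cube $Q\subseteq X$ maps isometrically onto a cube $S(Q)\in\cC_Z$; since $\pi$ is cubical from the $L$-subdivided structure on $Z$ to $Y$ — wait, the arrow goes $Y\to Z$, so rather each subdivided cube of $Z$ is the isometric image of a cube of $Y$. The key combinatorial observation is that for each expanded cube $Q\subseteq X$, the preimage $\tilde\pi^{-1}(Q)\cap P(X,Y)$ is naturally identified, via $\tilde S$, with $\pi^{-1}(S(Q))\subseteq Y$, which is a subcomplex of $Y$ (a union of cubes of side $l$, since $S(Q)$ has side $Ll$ and $\pi$ is cubical after subdividing $Z$ by $L$). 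So I would first set up the bijection between cells: the $d$-cells of $P(X,Y)$ are pairs $(Q, Q')$ where $Q\in\cC_X^L$ (the $L$-expanded structure), $Q'\in\cC_Y$, and $\pi(Q')$ is one of the $L^d$ subcubes of $S(Q)$. Each such $(Q,Q')$ comes with a characteristic map $[0, Ll]^d \supseteq (\text{sub-cube}) \to P(X,Y)$ built from the characteristic maps of $Q$ and $Q'$ and the matching given by $S$ and $\pi$.

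Next I would verify the three axioms of a cubical complex for this collection. Axiom (i): the characteristic maps are homeomorphisms onto their images because they are restrictions of $p_X\times p_Y$ to the fiber product, and their images cover $P(X,Y)$ because every point $(x,y)$ lies in some expanded cube $Q\ni x$ and some cube $Q'\ni y$, with $S(x)=\pi(y)$ forcing the compatibility. Axiom (ii), the face-matching condition: this follows from the analogous conditions for $X$ and $Y$ together with the fact that $S$ and $\pi$ are cubical, so overlaps of cubes in $P(X,Y)$ project to overlaps in $X$ and in $Y$ along common faces, and one assembles the required isometry of $[0,l]^d$ from the two component isometries. Axiom (iii): one must check the length metric coming from this cubical structure coincides with (a metric generating) the subspace topology. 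This is where I would use Lemma \ref{lem:cubicallemma}-type reasoning: the projections $\tilde S$ and $\tilde\pi$ are cubical by construction (for the stated structures), hence $1$-Lipschitz (after rescaling by $L$ for $\tilde\pi$), so the cubical metric on $P(X,Y)$ dominates the pullback of the metrics from $X$ and $Y$ and in particular is compatible with the subspace topology; compactness of $P(X,Y)$ (it is closed in $X\times Y$) then upgrades this to a homeomorphism.

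The statement about $\tilde S$ and $\tilde\pi$ being cubical is then essentially built into the construction: by design, the characteristic map of a cell $(Q,Q')$ composed with $\tilde S$ is the characteristic map of $Q'$, which is an isometry onto a cube of $Y$, so $\tilde S$ is cubical; composed with $\tilde\pi$ it is (the appropriate sub-cube restriction of) the characteristic map of $Q$, which is an isometry onto a cube of the $L$-subdivision of $X$, so $\tilde\pi$ is cubical with respect to that subdivision.

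The main obstacle I anticipate is axiom (iii) — reconciling the intrinsic length metric on the abstractly-assembled cubical complex with the subspace topology inherited from $X\times Y$. One has to be a little careful that "admissible chains" in $P(X,Y)$ don't give shortcuts or, conversely, that the length metric isn't strictly larger in a way that changes the topology; the clean way around this is to note that $P(X,Y)$ is compact Hausdorff and the identity from the cubical-metric space to the subspace is a continuous bijection (continuity because each characteristic map is continuous into $X\times Y$), hence a homeomorphism. The bookkeeping in axiom (ii), tracking which isometries $f\colon[0,l]^d\to[0,l]^d$ witness the face identifications, is tedious but routine given that $S$ and $\pi$ are cubical.
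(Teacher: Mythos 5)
Your proposal follows the same route as the paper: you index the cells of $P(X,Y)$ by the same compatible pairs of cubes (one from $X$, one from $Y$ with $\pi(Q')\subset S(Q)$), build the characteristic maps from the components via the pullback, verify the covering and face-matching axioms, and read off cubicality of $\tilde S$ and $\tilde\pi$ directly from the formulas for the characteristic maps; your explicit continuity-plus-compactness step to reconcile the cubical length metric with the subspace topology is a point the paper leaves implicit, and is a sensible addition.
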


\begin{proof} By the definition of a substitution complex, we have constants $l>0$ and $L\in\mathbb{N}$ such that $l=l_{X}=l_Y= l_Z/L$. Consider the following collection of pairs of cubes from $X$ and $Y$:
\[
\mathcal{Q}=\{(p_1,p_2) : p_1 \in \cC_{X}, p_2 \in \cC_{Y}, \text{image}(\pi\circ p_2) \subset \text{image}(S\circ p_1)\}.
\]

If $(p_1,p_2)\in\mathcal{Q}$, write $c_1=\text{image}(p_1)\subseteq X$ and $c_2=\text{image}(p_2)\subseteq Y$ for the associated cubes. Then $\pi(c_2)$ is contained in the cube $S(c_1)$ in $Z$. Since $S$ is cubical, there exists an inverse map for $S$ restricted to $S(c_1)$, denoted by $S^-_{c_1}:S(c_1)\to c_1.$

Let  $l_{P(X,Y)}=l_Z/L^2 = l/L$. Define $p_{p_1, p_2}:[0,l_{P(X,Y)}]^d \to P(X,Y)$ by:
    \[
    p_{p_1, p_2}(a)=(S^-_{c_1}(\pi(p_{2}(L a))), p_{2}(La)).
    \]
and set
$$ \cC_{P(X,Y)} = \{p_{p_1,p_2}: (p_1,p_2)\in\mathcal{Q}\}$$
We need to verify that this indeed gives a cubical structure on $P(X,Y)$. First, each characteristic  map $p_{p_1,p_2}$ is a homeomorphism onto its image, since it is continuous, injective, and defined on a compact set. Second, the cubes cover $P(X,Y)$. Indeed, for each $(x,y)\in P(X,Y)$ we have a cube $c_2=\text{image}(p_2)\in \cC_Y$ so that $y\in c_2$.  Now $\pi(c_2)$ is a cube $c'$ in the subdivision complex of $Z$. It is contained in a cube $c''$ of the original complex of $Z$. Now, $S(x)=\pi(y)\in c''$, and since $S$ is open and cubical,  there exists a cube $c_1=\text{image}(p_1)$ in $X$ so that $x\in c_1$ and $S(c_1)=c''$. We obtain that $(x,y)\in\text{image}(p_{p_1,p_2})$, i.e., that $(x,y)$ is contained in a cube of $\cC_{P(X,Y)}$. 

If two cubes $p_{p_1,p_2}, p_{p_3,p_4}$ from $\cC_{P(X,Y)}$ intersect, then (the images of) $p_1$ and $p_3$ intersect in $X$ and (the images of) $p_2$ and $p_4$ intersect in $Y$. These intersections must be along full faces, and hence the same is true for $p_{p_1,p_2}$ and $p_{p_3,p_4}$ in $P(X,Y)$.

Thus, the dimension $d$, the side length $l_{P(X,Y)}$, and the collection $\cC_{P(X,Y)}$ defines a cubical structure on $P(X,Y)$.  Finally, for each $p_{p_1,p_2}\in\cC(P(X,Y))$, we have $\tilde{S} \circ p_{p_1,p_2}(a) = p_2(La)$ and $\tilde{\pi}\circ p_{p_1,p_2}(a) = p_{2}(S^-_{c_1}(\pi(p_{p_2}(L a))))$ which yields that $\tilde{S}$ is cubical and that $\tilde{\pi}$ is cubical with respect to the subdivision of $X$ (since $S^-_{c_1}$ is such).

\end{proof}

If $X$ and $Y$ are cubical complexes, then we endow $P(X,Y)$ by default with the cubical structure $(\cC_{P(X,Y)}, d, l_{P(X,Y)})$ constructed in the above lemma. Note in particular that by Lemmas \ref{lem:cubicallemma} and \ref{lem:cubicallimit}, the mapping $\tilde{\pi}\colon P(X,Y)\rightarrow X$ is $1$-Lipschitz and surjective.

\subsection{Maps between substitution complexes}

\begin{definition}\label{def:morphism}
    A map between two substitution complexes $(X_i,Y_i,Z_i,S_i,\pi_i)$, $i=1,2$, is a triple of cubical mappings $f_{X}:X_1\to X_2$, $f_{Y}:Y_1\to Y_2$, and $f_Z:Z_1\to Z_2$ such that
    $S_2 \circ f_X = f_Z\circ S_1$ and $\pi_2\circ f_Y = f_Z \circ \pi_1$.

    We say that a map between two substitution complexes is an isomorphism if there is a collection of inverse maps $f_X^{-1}, f_Y^{-1}, f_Z^{-1}$ from $(X_2,Y_2,Z_2,S_2,\pi_2)$ to $(X_1,Y_1,Z_1,S_1,\pi_1)$. If $X_1=X_2, Y_1=Y_2, Z_1=Z_2$, then such a map is an automorphism.

    A map $(g_X,g_Y,g_Z)$ from $(X_2,Y_2,Z_2,S_2,\pi_2)$ to $(X_1,Y_1,Z_1,S_1,\pi_1)$  is called a left inverse of $(f_X,f_Y,f_Z)$ if $g_W \circ f_W$ is the identity for each $W\in\{X,Y,Z\}$.
\end{definition}
\begin{remark}
    It is possible to view a substitution complex as a  functor from the category of diagrams of a given form to the category of cubical complexes. In this language, the map between two substitution complexes would be a natural transformation between two such functors. 
\end{remark}

Given the categorical construction, the following is a natural lemma.

\begin{lemma}\label{lem:morphism} If $(f_X,f_Y,f_Z)$ is a map between two substitution complexes $(X_i,Y_i,Z_i,S_i,\pi_i)$, $i=1,2$, then there is a cubical map $f_{P(X,Y)}:P(X_1,Y_1)\to P(X_2,Y_2)$ for which $\tilde{\pi}_2 \circ f_{P(X,Y)} = f_X \circ \tilde{\pi}_1$ and $\tilde{S}_2 \circ f_{P(X,Y)} = f_Y \circ \tilde{S}_1$, where $\tilde{\pi}_i, \tilde{S}_i$ are the pullback maps.

Furthermore, the mapping $f_{P(X,Y)}$ is $1$-Lipschitz. If the map $(f_X,f_Y,f_Z)$ has a left inverse, then $f_{P(X,Y)}$ is an isometry. If the map is an automorphism, then $f_{P(X,Y)}$ is a self-isometry of $P(X_1,Y_1)$.
\end{lemma}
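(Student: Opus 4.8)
The plan is to construct $f_{P(X,Y)}$ explicitly using the universal property of the pullback, then check the stated properties by reducing to the cubical lemmas already proved.

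First I would define the map. Since $(f_X,f_Y,f_Z)$ is a map of substitution complexes, the outer square in
\[
\begin{tikzcd}
P(X_1,Y_1) \arrow[rd, "{\tilde S_1}"] \arrow[rrd, bend left, "{f_Y\circ \tilde S_1}"] \arrow[ddr, bend right, "{f_X\circ\tilde\pi_1}"'] & & \\
& Y_1 \arrow[r, "{f_Y}"] \arrow[d, "{\pi_1}"] & Y_2 \arrow[d, "{\pi_2}"] \\
& Z_1 \arrow[r, "{f_Z}"] & Z_2
\end{tikzcd}
\]
commutes: indeed $\pi_2\circ(f_Y\circ\tilde S_1) = f_Z\circ\pi_1\circ\tilde S_1 = f_Z\circ S_1\circ\tilde\pi_1 = S_2\circ f_X\circ\tilde\pi_1$, using first that $(f_X,f_Y,f_Z)$ is a morphism and then that the pullback square for $P(X_1,Y_1)$ commutes. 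Hence there is a unique continuous map $f_{P(X,Y)}\colon P(X_1,Y_1)\to P(X_2,Y_2)$ with $\tilde\pi_2\circ f_{P(X,Y)} = f_X\circ\tilde\pi_1$ and $\tilde S_2\circ f_{P(X,Y)} = f_Y\circ\tilde S_1$; concretely, in the coordinate description $P(X_i,Y_i)=\{(x,y): S_i(x)=\pi_i(y)\}$, it is just $f_{P(X,Y)}(x,y) = (f_X(x), f_Y(y))$, which indeed lands in $P(X_2,Y_2)$ because $S_2(f_X(x)) = f_Z(S_1(x)) = f_Z(\pi_1(y)) = \pi_2(f_Y(y))$.

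Next I would check that $f_{P(X,Y)}$ is cubical, which is where the main (mild) bookkeeping lies. Using the explicit characteristic maps $p_{p_1,p_2}$ from the proof of Lemma \ref{lem:cubicallimit}: given a cube of $P(X_1,Y_1)$ coming from $(p_1,p_2)\in\mathcal{Q}_1$, I claim $f_{P(X,Y)}$ carries it isometrically onto the cube of $P(X_2,Y_2)$ coming from $(f_X\circ p_1', f_Y\circ p_2)$, where $p_1'$ is the appropriate cube of $X_1$ and $f_X\circ p_1'$, $f_Y\circ p_2$ are cubes of $X_2$, $Y_2$ (they are cubes because $f_X,f_Y$ are cubical). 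Composing with the formula $p_{p_1,p_2}(a) = (S^-_{c_1}(\pi_1(p_2(La))), p_2(La))$ and applying $f_{P(X,Y)} = (f_X, f_Y)$ coordinatewise, one gets $(f_X(S^-_{c_1}(\pi_1(p_2(La)))), f_Y(p_2(La)))$; using the intertwining relations $\pi_2\circ f_Y = f_Z\circ\pi_1$ and $S_2\circ f_X = f_Z\circ S_1$ one identifies $f_X\circ S^-_{c_1}$ with the corresponding branch of the inverse $(S_2)^-$ on the image cube, so this agrees exactly with the characteristic map of the target cube evaluated at $a$. Thus $f_{P(X,Y)}$ restricted to each cube of $P(X_1,Y_1)$ is an isometry onto a cube of $P(X_2,Y_2)$, i.e. $f_{P(X,Y)}$ is cubical, and by Lemma \ref{lem:cubicallemma} it is $1$-Lipschitz.

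Finally, the statements about inverses follow formally. If $(g_X,g_Y,g_Z)$ is a left inverse of $(f_X,f_Y,f_Z)$, then by the same construction it induces a cubical map $g_{P(X,Y)}\colon P(X_2,Y_2)\to P(X_1,Y_1)$, and from the coordinate description $g_{P(X,Y)}\circ f_{P(X,Y)}(x,y) = (g_X(f_X(x)), g_Y(f_Y(y))) = (x,y)$, so $g_{P(X,Y)}$ is a cubical left inverse of $f_{P(X,Y)}$; Lemma \ref{lem:cubicallemma} then gives that $f_{P(X,Y)}$ is an isometry. If $(f_X,f_Y,f_Z)$ is an automorphism (so $X_1=X_2$, etc.), then $f_{P(X,Y)}$ is a self-isometry of $P(X_1,Y_1)$. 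I expect the only real obstacle to be the verification that $f_{P(X,Y)}$ is cubical — specifically matching up the local inverse branches $S^-_{c_1}$ on the source with those on the target via the relation $S_2\circ f_X = f_Z\circ S_1$ — but this is routine once the explicit formulas from Lemma \ref{lem:cubicallimit} are unwound, and everything else is formal nonsense about pullbacks plus an appeal to Lemma \ref{lem:cubicallemma}.
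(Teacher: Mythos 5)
Your proposal is correct and takes essentially the same route as the paper: define $f_{P(X,Y)}(x,y)=(f_X(x),f_Y(y))$ on the coordinate model of the pullback, verify it is cubical using the characteristic maps from Lemma \ref{lem:cubicallimit}, and deduce $1$-Lipschitzness and the isometry claims from Lemma \ref{lem:cubicallemma} together with the observation that a left inverse $(g_X,g_Y,g_Z)$ induces a cubical left inverse $g_{P(X,Y)}$. The paper compresses the cubicality check to ``direct to verify''; you correctly flag that the only real content there is matching the local inverse branches $S^-_{c_1}$ on the source with those on the target via $S_2\circ f_X=f_Z\circ S_1$.
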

\begin{proof}
Recalling that
$P(X_i,Y_i)=\{(x,y) \in X_i\times Y_i : S_i(x)=\pi_i(y)\},$
we set 
$$ f_{P(X,Y)}(x,y)=(f_X(x),f_Y(y)).$$
It is direct to verify that this map is continuous and cubical with respect to the cubical structure given in Lemma \ref{lem:cubicallimit}. Thus $f_{P(X,Y)}$ is $1$-Lipschitz by Lemma \ref{lem:cubicallemma}. The rest of the claims follow directly from Lemma \ref{lem:cubicallemma}, since if $(g_X,g_Y,g_Z)$ is a left inverse of $(f_X,f_Y,f_Z)$, then $g_{P(X,Y)}$ is a left inverse of $f_{P(X,Y)}$.
    \end{proof}

\subsection{Iteration of substitution complexes }\label{subsec:iteration}

We now apply the previous construction to a \emph{self-similar substitution complex}, which is a substitution complex in which $X=Y=X_1$ and $Z=X_0$, that is, $(X_1,X_1,Z,S_1,\pi_1)$. Let $L\in\N$ be the subdivision parameter in Definition \ref{def:substitutioncomplex}. At the first stage, we obtain $P(X,Y)=X_2$, and maps $\pi_{2}=\tilde{\pi}_1$ and $S_2=\tilde{S}_1$ from $X_2$ to $X_1$. Since $X=Y$ in this case, we can iterate the construction. Recursively, we take the pullback of the following diagram.
\[\begin{tikzcd}
	X_{n} && X_1 \\
	& Z
	\arrow["S_1 \circ S_n", from=1-1, to=2-2]
	\arrow["\pi_1"', from=1-3, to=2-2]
\end{tikzcd}\]
This defines a space $X_{n+1}$ together with maps $\pi_{n}:X_{n+1}\to X_n$ and $S_{n+1
}:X_{n+1}\to X_1$. Equivalently, we could define $X_n$ as the inverse limit of the following diagram.
\begin{equation}\label{tik:xndiagram}
\begin{tikzcd}
	X_{1} && X_1 && X_1 && X_1 &&\cdots && X_1  \\
	& Z && Z && Z && Z &\cdots& Z  
	\arrow["S_1", from=1-1, to=2-2] \arrow["S_1", from=1-3, to=2-4] \arrow["S_1", from=1-5, to=2-6] \arrow["S_1", from=1-7, to=2-8] \arrow["S_1", from=1-9, to=2-10]
	\arrow["\pi_1"', from=1-3, to=2-2] \arrow["\pi_1"', from=1-5, to=2-4] \arrow["\pi_1"', from=1-7, to=2-6] \arrow["\pi_1"', from=1-7, to=2-6] \arrow["\pi_1"', from=1-9, to=2-8] \arrow["\pi_1"', from=1-11, to=2-10]
\end{tikzcd}
\end{equation}
From this perspective, we can represent the space $X_n$ as
\begin{equation}\label{eq:Xnrep}
X_n=\{(x_1,\dots, x_n) : S_1(x_i)=\pi_1(x_{i+1}), i=1, \dots, n-1\}
\end{equation}
with the topology coming from the product topology. For the purpose of later analysis, we will fix this representation of  $X_n$. We will call the spaces $X_n$ the \textit{replacement complexes} of the substitution complex $(X_1, X_1, Z, S_1, \pi_1)$.

That $X_n$ also has a cubical structure follows from repeated application of Lemma \ref{lem:cubicallimit}. This structure admits the following description, which is a straightforward application of induction and Lemmas \ref{lem:cubicallemma} and \ref{lem:cubicallimit}.

\begin{lemma}\label{lem:cubicalxn}
The space $X_n$ has a cubical structure of the same dimension $d$ as $X_1, X_0$ and side length $l_{X_n}=l_{X_0}/L^n$. The cubes of $X_n$ can be described as 
$$ \cC_{X_n} = \{p_{p_1, \dots, p_n} : p_i \in \cC_{X_1}, \pi_1(c_{i+1})\subset S_1(c_i) \},$$
where $c_i = {\rm image}(p_i)$. The maps $p_{p_1, \dots, p_n}$ are defined by
$$ p_{p_1,\dots, p_n}(a) = (x_1, \dots, x_n),$$
where $x_n = p_n(L^n a)$ and recursively $x_{n-k} = S_{c_{n-k}}^{-1}(\pi(x_{n-k+1})).$
\end{lemma}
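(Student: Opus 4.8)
The plan is to prove the lemma by induction on $n$, at each stage recognizing $X_{n+1}$ as a pullback of the form handled by Lemma \ref{lem:cubicallimit} and then unwinding the explicit description of the characteristic maps that lemma produces. For the base case $n=1$, the space $X_1$ is a cubical complex of dimension $d$ by hypothesis, and the side-length conventions of Definition \ref{def:substitutioncomplex} (in which $X_1$ has side length $l$ and $Z=X_0$ has side length $Ll$) give $l_{X_1}=l_{X_0}/L$; the indexing $\cC_{X_1}=\{p_{p_1}:p_1\in\cC_{X_1}\}$ and the stated formula reduce to tautologies, the compatibility condition $\pi_1(c_{i+1})\subset S_1(c_i)$ being vacuous when $n=1$.

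In the inductive step I would assume the description for $X_n$ and recall from Subsection \ref{subsec:iteration} that $X_{n+1}$ is the pullback of the diagram $X_n \xrightarrow{S_1\circ S_n} Z \xleftarrow{\pi_1} X_1$. Here $S_n\colon X_n\to X_1$ is a composition of maps that are cubical after expansion, hence a similarity on each cube of $X_n$, so the one-sided inverse branches $S^-_c$ appearing in the proof of Lemma \ref{lem:cubicallimit} are well defined; the Lipschitz bounds on the $S_m$ and $\pi_m$ needed to legitimize the requisite rescalings are exactly those supplied by Lemma \ref{lem:cubicallemma}. Applying Lemma \ref{lem:cubicallimit} (whose proof is insensitive to the overall scale, using only that $S_1\circ S_n$ is a similarity on cubes and that $\pi_1$ is cubical onto a once-subdivided cube structure) then gives that $X_{n+1}$ carries a cubical structure of dimension $d$ with side length $l_{X_n}/L=l_{X_0}/L^{n+1}$, whose characteristic maps are indexed by pairs $(p,p_{n+1})$ with $p\in\cC_{X_n}$, $p_{n+1}\in\cC_{X_1}$ and $\pi_1(\mathrm{image}(p_{n+1}))\subset (S_1\circ S_n)(\mathrm{image}(p))$, and are given by the explicit formula in that proof.

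It then remains to unwind this. Using the inductive hypothesis I would write $p=p_{p_1,\dots,p_n}$ for compatible $p_1,\dots,p_n\in\cC_{X_1}$ with $c_i=\mathrm{image}(p_i)$. In the fixed representation \eqref{eq:Xnrep} the map $S_n$ is the projection $(x_1,\dots,x_n)\mapsto x_n$, so the recursive formula for $p_{p_1,\dots,p_n}$ shows $S_n(\mathrm{image}(p_{p_1,\dots,p_n}))=c_n$, hence $(S_1\circ S_n)(\mathrm{image}(p))=S_1(c_n)$. Thus the compatibility condition attached to the new cube becomes exactly $\pi_1(c_{n+1})\subset S_1(c_n)$, which together with the inductive conditions is precisely the condition claimed for $X_{n+1}$. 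Factoring the inverse branch $(S_1\circ S_n)^-_{\mathrm{image}(p)}$ as $S^-_{1,c_n}$ followed by the recursive reconstruction of $(x_1,\dots,x_{n-1})$ from $x_n$ provided by the inductive description of $\cC_{X_n}$, one reads off that the characteristic map of the new cube has last coordinate $p_{n+1}$ applied to the appropriate rescaling of the argument and lower coordinates $x_{n-k}=S^-_{1,c_{n-k}}(\pi_1(x_{n-k+1}))$ — that is, the asserted recursion.

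The part I expect to be the main obstacle is precisely this last bit of bookkeeping: reconciling the one-step characteristic map produced by Lemma \ref{lem:cubicallimit}, which is phrased via a single inverse branch of $S_1\circ S_n$, with the layer-by-layer recursion in the statement, which uses inverse branches of $S_1$ alone, all while keeping the powers of $L$ consistent with the representation \eqref{eq:Xnrep}. This should be mechanical once one observes that on a cube of $X_n$ the map $S_1\circ S_n$ factors as $S_1|_{c_n}\circ (S_n|_{\mathrm{cube}})$ with $S_n|_{\mathrm{cube}}$ just the projection onto the last coordinate; everything else is a routine induction.
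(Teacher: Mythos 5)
Your proof is correct and follows exactly the route the paper itself indicates: the text preceding Lemma~\ref{lem:cubicalxn} states that it is ``a straightforward application of induction and Lemmas~\ref{lem:cubicallemma} and~\ref{lem:cubicallimit},'' and your induction, identifying $X_{n+1}$ as the pullback $P(X_n,X_1)$ of $X_n \xrightarrow{S_1\circ S_n} Z \xleftarrow{\pi_1} X_1$ and unwinding the explicit characteristic maps of Lemma~\ref{lem:cubicallimit}, is precisely that argument. You also correctly flag the one genuine subtlety — that $(X_n,X_1,Z,S_1\circ S_n,\pi_1)$ does not literally satisfy the side-length normalization of Definition~\ref{def:substitutioncomplex} since $l_{X_n}=l_{X_1}/L^{n-1}$, so one must either rescale $X_n$ before applying Lemma~\ref{lem:cubicallimit} or observe, as you do, that the lemma's proof depends only on $S_1\circ S_n$ being a similarity on cubes — and the factoring of the inverse branch $(S_1\circ S_n)^-$ through $S_1^-$ and the reconstruction supplied by the inductive hypothesis is the right way to pass from the one-step formula of Lemma~\ref{lem:cubicallimit} to the layer-by-layer recursion in the statement.

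One minor caveat: the scaling factor in the lemma's formula $x_n=p_n(L^n a)$ (and hence in your base case, where you describe it as tautological) does not actually match what Lemma~\ref{lem:cubicallimit} produces. For $n=1$ the characteristic map should just be $p_1$ itself, and for $n=2$ Lemma~\ref{lem:cubicallimit} gives $x_2=p_2(La)$, not $p_2(L^2a)$, given the stated side length $l_{X_n}=l_{X_0}/L^n$; the consistent reading is $x_n=p_n(L^{n-1}a)$. This looks like a small off-by-one typo in the paper's statement rather than a gap in your argument, but it would be worth noting in the base case rather than calling the formula a tautology.
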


The main content of Lemma \ref{lem:cubicalxn} is that each cube $c=\text{image}(p)$ of $X_n$ can be associated to a certain sequence $(c_i=\text{image}(p_i))_{i=1}^n$ of  $n$ cubes from $X_1$. In our main example below, $X_0$ will only have a single cube, and thus the condition $\pi(c_{i+1})\subset S(c_i)$ will hold vacuously. In that case, there is therefore a correspondence between  $\cC_{X_n}$ and the set of \emph{all} sequences $(c_1,c_2,\dots, c_n)$ with $c_i\in \cC_{X_1}$. Thus, $\cC_{X_n}$ can be viewed as the collection of words $w$ of length $n$ in the alphabet $\cC_{X_1}$, i.e., the product set $\cC_{X_1}^n$. 

Finally, we can  define the inverse system of metric measure spaces
\[
X_0 \xleftarrow[\pi_1]{} X_1 \xleftarrow[\pi_2]{} X_2 \xleftarrow[\pi_3]{} X_3 \xleftarrow[\pi_4]{} \cdots \xleftarrow[\pi_n]{} X_n \xleftarrow[\pi_{n+1}]{} \cdots,
.\]
Each $\pi_i$ is a $1$-Lipschitz surjection. For any such sequence of spaces, there is a canonical construction of an inverse limit metric space:
\begin{equation}\label{eq:inverselimit}
X = \{(q_i)_{i=1}^\infty : q_i\in X_i, \pi_i(q_i)=q_{i-1}, i\geq 2\}, \quad d_0((q_i)_{i=1}^\infty,  (q'_i)_{i=1}^\infty)=\lim_{i\to\infty} d_i(q_i,q'_i),
\end{equation}
with $1$-Lipschitz surjections $\pi_{\infty,n}:X\to X_n$ defined by  $\pi_{\infty,n}((q_i)_{i=1}^\infty)=q_n$. We also set $\Pi = \pi_{\infty,0} = \pi_1 \circ \pi_{\infty, 1}.$

These maps are $1$-Lipschitz, surjective, and satisfy the compatibility conditions $\pi_{n} \circ \pi_{\infty,n}=\pi_{\infty,n-1}$. An equivalent way of obtaining $X$ is by taking an inverse limit of an infinite diagram of the form in the Diagram \ref{tik:xndiagram}, whence we can obtain a continuous bijection of $X$ with the set
\begin{equation}\label{eq:sequencespace}
X = \{(x_i)_{i=1}^\infty : x_i\in X_1, \pi_1(x_{i+1})=S_1(x_{i}), i\geq 1\},
\end{equation}
with $q_i$ in representation \eqref{eq:inverselimit} corresponding to $(x_1, x_2, \dots, x_i)$ in representation \eqref{eq:sequencespace}. In representation \eqref{eq:sequencespace}, the mappings $\pi_{\infty,n}$ for $n\geq 1$ are simply
$$ \pi_{\infty,n}((x_i)_{i=1}^\infty) = (x_1, \dots, x_n).$$

In this way we see that $X$ is compact. This space $X$ is called the \emph{inverse limit} of the substitution complex $(X_1,X_1,X_0,\pi,S)$. While the previous discussion gives two representations of $X$, it is \eqref{eq:sequencespace} that we will use in the analysis of the next sections. 

\subsection{Word encoding and self-similar structure}\label{subsec:wordencoding}
Under quite weak conditions, the inverse limit space $X$ is the attractor of an iterated function system. 
 Consider a substitution complex $(X_1,X_1,X_0, \pi, S)$ in the special case where $X_0$ has a single cube. Let $\cC_{X_1}$ be the collection of cubes of $X_1$; as often, we abuse notation and conflate the characteristic maps in $\cC_{X_1}$ with their images. For each $c\in \cC_{X_1}$, we have an inverse map $S_c^{-1}:X_0\to c$ of $S|_{c}:c\to X_0$. The map $S_c^{-1}$ is cubical when the edge length of $X_0$ is divided by $L$. We define $F_c^n:X_n\to X_{n+1}$ by 
\[
F_c^n((x_1,x_2, \dots x_n)) = (S_{c}^{-1}(\pi(x_1)), x_1, x_2, \dots, x_n).
\]
This map is continuous and cubical when the edge length of $X_n$ is divided by $L$. This follows from Lemma \ref{lem:cubicalxn}. This means, by Lemma \ref{lem:cubicallemma}, that $F^n_c$ is $L^{-1}-$Lipschitz. There is also a left inverse to $F^n_c$, the map $\sigma^n:X_{n+1}\to X_n$, which is given by
\[
\sigma_n((x_1,x_2, \dots x_{n+1}))=(x_2,\dots, x_{n+1}).
\]
By again applying Lemma \ref{lem:cubicalxn}, we see that $\sigma_n$ is cubical when the edge length of $X_{n+1}$ is multiplied by $L$. Thus $\sigma_n$ is $L$-Lipschitz.

We may now take an inverse limit of the substitution complex to obtain a space $X$ as in \eqref{eq:sequencespace}. We then obtain maps $F_c$ and $\sigma$ of $X$ as follows:
\[
F_c((x_1,x_2, \dots)) = (S_{c}^{-1}(\pi(x_1)), x_1, x_2, \dots)
\]
and
\[
\sigma((x_1,x_2, \dots ))=(x_2, x_3, \dots).
\]

Using Lemma \ref{lem:cubicallemma} we can show that the mappings $F_c$ are similarity maps. These give further that the inverse limit space $X$ is \textbf{a)} the attractor of an iterated function system, \textbf{b)} which satisfies the open set condition, and \textbf{c)} whose Hausdorff dimension can be computed in terms of the parameter $L$ and the set of cubes $\cC_{X_1}$.

\begin{lemma}\label{lem:IFS} Let $(X_1,X_1,X_0, \pi, S)$ be a substitution complex, $L\in\N, L\geq 2$ be its scaling parameter, and let $X_0$ consist of a single cube. Let $\cC_{X_1}$ be the cubes of $X_1$ and let $F_c$ and $\sigma$ be the maps defined above, for $c\in \cC_{X_1}$. Then the maps $F_c$ are $L^{-1}$ similarities, and $X$ is the attractor of the IFS $\cF=\{F_c : c\in \cC_{X_1}\}$. Moreover, the IFS satisfies the open set condition, and $X$ is Ahlfors $p$-regular with
\[
p={\rm dim}_H(X)=\frac{\log(|\cC_{X_1}|)}{\log(L)}.
\]
\end{lemma}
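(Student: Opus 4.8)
The plan is to verify each of the three assertions in turn, relying on the cubical-complex machinery already developed. First I would establish that each $F_c\colon X\to X$ is an $L^{-1}$-similarity. The maps $F^n_c\colon X_n\to X_{n+1}$ were already shown to be cubical when the edge length of $X_n$ is divided by $L$, hence $L^{-1}$-Lipschitz by Lemma \ref{lem:cubicallemma}, and $\sigma_n$ is a cubical left inverse after multiplying the edge length of $X_{n+1}$ by $L$, hence $L$-Lipschitz; combining these gives that $F^n_c$ is an $L^{-1}$-similarity on each finite stage. Passing to the inverse limit via the metric formula $d_0 = \lim_i d_i$ in \eqref{eq:inverselimit}, the compatibility of $F_c$ with the $\pi_{\infty,n}$ (so that $\pi_{\infty,n+1}\circ F_c = F^n_c\circ \pi_{\infty,n}$, up to the indexing shift) upgrades the finite-stage similarity estimates to $d_0(F_c(x),F_c(y)) = L^{-1}d_0(x,y)$ for all $x,y\in X$. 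So $\cF=\{F_c : c\in\cC_{X_1}\}$ is a self-similar IFS in the sense of Definition \ref{def:IFS} with all ratios $\lambda_c = L^{-1}$.

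Next I would identify $X$ as the attractor. From the explicit formula $F_c((x_1,x_2,\dots)) = (S_c^{-1}(\pi(x_1)), x_1, x_2, \dots)$, a sequence $(y_1,y_2,\dots)\in X$ lies in $\mathrm{image}(F_c)$ precisely when its leading coordinate $y_1$ equals $S_c^{-1}(\pi(y_2))$, i.e. when $y_1$ lies in the cube $c$ (recall $S_c^{-1}$ maps into $c$), and the compatibility relation $\pi(y_2) = S(y_1)$ from \eqref{eq:sequencespace} then makes $y_1 = S_c^{-1}(\pi(y_2))$ automatic. Hence $\bigcup_{c\in\cC_{X_1}} F_c(X)$ is exactly the set of $(y_1,y_2,\dots)\in X$ whose leading coordinate lies in \emph{some} cube of $X_1$, which is all of $X$ since the cubes cover $X_1$. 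By Theorem \ref{thm:IFS} the attractor is unique, so $X$ is the attractor of $\cF$. For the open set condition, I would take $O$ to be (the preimage under $\pi_{\infty,1}$ of) the union of the open cubes of $X_1$ — equivalently, the set of $x\in X$ whose first coordinate lies in the interior of some cube. Since $F_c$ shifts coordinates and places the new leading coordinate inside the \emph{interior} of $c$ via $S_c^{-1}$ composed with the subdivision, the sets $F_c(O)$ are pairwise disjoint (they are distinguished by which open cube of $X_1$ the leading coordinate lies in) and each is contained in $O$; here one uses property (ii) of the cubical structure, that distinct cubes meet only along faces, so distinct open cubes are disjoint.

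Finally, the dimension statement follows by feeding the open set condition into Theorem \ref{thm:IFS}: $\Hdim(X) = \Sdim(\cF)$, and the similarity dimension $s$ is defined by $\sum_{c\in\cC_{X_1}} (L^{-1})^s = 1$, i.e. $|\cC_{X_1}|\cdot L^{-s} = 1$, giving $s = \log|\cC_{X_1}|/\log L$. Theorem \ref{thm:IFS} then also yields the Ahlfors $p$-regularity of $X$ with $p = s$. I expect the main obstacle to be the first step: carefully justifying that the finite-stage similarity relations genuinely pass to the limit to give an \emph{exact} $L^{-1}$-similarity for $F_c$ on $(X,d_0)$, rather than merely a bi-Lipschitz estimate — this requires tracking the interplay between the $F^n_c$, the projections $\pi_{\infty,n}$, and the limiting definition of $d_0$, and checking that the left inverse $\sigma$ behaves compatibly in the limit. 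The remaining combinatorial points (covering, disjointness of the $F_c(O)$, containment in $O$) are routine consequences of the cubical axioms already set up.
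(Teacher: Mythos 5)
Your proposal is correct and follows essentially the same path as the paper's proof: establish the $L^{-1}$-similarity via Lemma \ref{lem:cubicallemma} and a passage to the limit, identify $X$ as the attractor from the explicit formula for $F_c$, verify the open set condition with an open set defined by where the first coordinate lands, and invoke Theorem \ref{thm:IFS}. The only cosmetic difference is the choice of $O$ (you use the set where the first coordinate lies in the interior of some cube of $X_1$, while the paper uses the slightly larger set where $\Pi$ lands in the interior of $X_0$), but both choices satisfy the open set condition by the same reasoning.
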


\begin{proof}
    We have that for every $c\in \cC_{X_1}$ the map $F_c$ is $L^{-1}$-Lipschitz and that $\sigma$ is an $L$-Lipschitz left inverse of the map $F_c$. Thus, $F_c$ is an $L^{-1}$-similarity map. Moreover, from Lemma \ref{lem:cubicalxn}, we can see that
    \[
    X=\bigcup_{c\in \cC_{X_1}} F_c(X).
    \]
    Thus, $X$ is the attractor of the given IFS. Next, let $U$ be the interior of the single cell in $X_0$, and let $O=\{(x_1,\dots): x_1\in U\}$, an open set in $X$. Now, one can see that $F_c(O)=\{(x_1,x_2, \dots) \in X : x_1 \in {\rm Int}(c), x_2\in U\}$. Since the cells $c$ have disjoint interiors in $X_1$, the images $F_c(O)$ are disjoint for distinct $c\in \cC_{X_1}$.  Further, ${\rm Int}(c)\subset U$, so $F_c(O)\subset O$. Thus, the IFS $\cF$ satisfies the open set condition. Therefore, the conclusion of the lemma follows from Theorem \ref{thm:IFS}. 
\end{proof}

We next introduce some natural terminology relating to iterated function systems. 
Given the maps $F_c$ for $c\in \cC_{X_1}$, there is a natural word-encoding for elements in $X$. Namely, for every finite word $w=(c_1, c_2, \dots, c_n) \in \cC_{X_1}^n$ of length $n$ in the alphabet $\cC_{X_1}$, we set $F_w = F_{c_1}\circ F_{c_2} \circ \cdots \circ F_{c_n}$. For the empty word, $w=\emptyset$, we set $F_w$ to be the identity. The collection of all finite length words is denoted $\cC_{X_1}^*=\{\emptyset\} \cup \bigcup_{n=1}^\infty \cC_{X_1}^n.$ The length of a finite word is denoted $|w|$.

Given this, we can define $X^w = F_w(X)$. Further, if $w=(c_1,c_2, \cdots, )\in \cC_{X_1}^\N$ is an infinite word, we can define $F(w)$ to be the unique point
\[
F(w) \in \bigcap_{n=1}^\infty F_{w(n)}(X),
\]
where $w(n) = (c_1, \dots, c_n)$ is the truncation of the word $w$ to its first $n$ letters. This point is unique, since the diameters of the sets $F_{w(n)}(X)$ decrease geometrically in $n\in \N$. Note that the map $w\mapsto F(w)$ is not injective.

\subsection{The pillow space, sheets, and symmetries}\label{subsec:pillowspace}

We now specialize to discuss the specific example that we will study in the rest of the paper.

Our example is the inverse limit of a substitution complex with dimension $d=2$ and subdivision parameter $L=3$. Fix $X_0 = [0,1]^2$, with the obvious cubical structure consisting of a single cube. Let $X_1$ be the cubical complex obtained by subdividing $X_0$ into nine congruent sub-squares and doubling the central square. In other words, we glue to the central square of $X_0$ an additional square of side length $3^{-1}$, attached to the original central square along its boundary. Thus, $X_1$ is a cubical complex with ten cubes (squares). The map $\pi:X_1\to X_0$ is obtained by identifying the glued square with the corresponding points of $X_0$. The map $S:X_1\to X_0$ is obtained by scaling up each sub-square by a factor of $3$, and reflecting in each sub-square appropriately to ensure continuity. 

\begin{definition}\label{def:pillowspace}
The \textit{pillow space} $(X,d_0)$ is the inverse limit of the substitution complex $(X_1, X_1, X_0, S, \pi)$.  
\end{definition}
In particular, $X$ is a compact metric space with a geodesic metric $d_0$.

We next wish to define some important subsets of $X$, which we call ``sheets'', and some related symmetries of $X$. Let $X_1^S$ be the cubical complex obtained by subdividing $X_0$ into nine congruent subsquares, and let $\pi^S\colon X_1^S\to X_0$ be the identity map and $S^S\colon X_1^S\to X_0$ the map that scales the squares by a factor of three and reflects to ensure continuity. This gives a substitution complex $(X_1^S, X_1^S, X_0, S^S, \pi^S)$. Since $\pi^S$ is an isometry, the sequence of replacement complexes $X_n^S$ are all isometric to $X_0$ as metric spaces, with each of the maps $\pi^S_{\infty,n}$ being an isometry. As cubical complexes the spaces $X^S_n$ are simply iterated subdivisions of the unit square $X_0$. We can also identify each point in $X_n^S$ by a sequence $(x_1,\dots, x_n)$, where $x_n =\pi^S(x_n)= S^S(x_{n-1})$. With this identification, $\pi_n^S:X_n^S \to X_0$ is given by $(x_1,\dots, x_n)\mapsto x_1$. The limit $X^S$ of this substitution complex is isometric by $\pi^S_{\infty,0}$ to the unit square $X_0\cong [0,1]^2$. 

We now have cubical maps $(\pi,\pi,{\rm Id})$ from $(X_1,X_1,X_0)$ to $(X_1^S, X_1^S, X_0)$, and thus the map $\pi_n^S: (x_1, \dots, x_n)\mapsto (\pi(x_1), \dots, \pi(x_n))$ is cubical. 

Now, there are (only) two cubical lifts $s_0,s_1:X_1^S\to X_1$ that satisfy ${\rm Id}=\pi\circ s_i$ and $S^S = S \circ s_i$ for $i=0,1$. These correspond to choosing one of the central squares in $X_1$. The maps $s_i$ are isometries; indeed, $\pi$ yields a cubical left inverse (identifying $X_1^S$ with $X_0$). There are also two cubical self-maps $\iota_0, \iota_1:X_1\to X_1$ that satisfy $\pi = \pi \circ \iota_i$ and $S = S \circ \iota_i$ for $i=0,1$. We define $\iota_0$ as the identity map and $\iota_1$ as the map which flips the two central squares, but leaves other squares invariant.

Using these maps, and iterating Lemma \ref{lem:morphism}, we can define the following maps between the sequences of spaces coming from the substitution construction. If $w=(w_1, \dots, w_n)\in \ZZ_2^n$ is a finite word, we define
\begin{align*}
&s_w:X_n^S \to X_n, \ \ \ \ s_w(x_1, \dots, x_n) = (s_{w_1}(x_1), \cdots, s_{w_n}(x_n)) \\
&\iota_w:X_n \to X_n, \ \ \ \ \iota_w(x_1, \dots, x_n) = (\iota_{w_1}(x_1), \cdots, \iota_{w_n}(x_n))
\end{align*}
and the inverse limits of these maps, given an infinite word $w\in \ZZ_2^\N \cong \{0,1\}^\N$, by
\begin{align*}
&s_w:X^S \to X, \ \ \ \ s_w(x_1, \dots) = (s_{w_1}(x_1),s_{w_2}(x_2)\dots) \\
&\iota_w:X \to X, \ \ \ \ \iota_w(x_1, \dots) = (\iota_{w_1}(x_1),\iota_{w_2}(x_2) \dots).
\end{align*}
The maps $s_w, \iota_w$ are all isometries onto their images; recall also that $X_n^S$ and $X^S$ are isometric to $X_0\cong [0,1]^2$. For each infinite word $w\in \ZZ_2^\N$, we call the image of $s_w$ a \emph{sheet} of $X$. For every finite word $w\in \cC_X^n$, the map $s_w$ has a left inverse given by $\pi_n:X_n\to X_n^S$ since ${\rm Id}=\pi \circ s_i$. 
Thus, $s_w$ is an isometry for all finite words $w$.  For an infinite word $w\in \cC_X^\N$, the map $s_w$ is an inverse limit of the isometries $s_{w(n)}$, and thus $s_w$ is an isometry with left inverse given by $\Pi =\pi_{\infty,0}:X_\infty \to X_\infty^S\cong X_0$.

It is easy to see that $\iota_w \circ \iota_{w'}=\iota_{w+w'}$ and hence the map $\ZZ_2^\N \to \{\iota_w:w\in \ZZ^2\}$ yields an isometric action of the compact Abelian group $\ZZ_2^\N$ on $X$. We also see that
\[
\iota_w \circ s_{w'}=s_{w+w'}.
\]
Thus, the isometric action of $\ZZ_2^\N$ transitively permutes the sheets of $X$.

We next study the interactions between these mappings and the mappings $F_c$ and $\sigma$ defined in subsection \ref{subsec:wordencoding}. Let $w,w'\in \ZZ_2^\N$ be infinite words. For $n\in \N$ define $w'|_n=(0,\dots, 0, w'(1), w'(n+1),\cdots)$ to be the word obtained from $w'$ by appending $n$ zeros to the beginning.  Further, by a slight abuse of notation, let $\sigma(w)=(w_2,w_3,\dots)$. It follows directly from the definitions that
\[
\sigma \circ s_w = s_{\sigma(w)} \circ \sigma, \text{ and } \sigma \circ \iota_w = \iota_{\sigma(w)} \circ \sigma.
\]
Further, $\pi \circ \iota_i = \pi$ for $i=0,1$, and thus, by definition of $F_c$, we get
\[
F_c \circ \iota_w = \iota_{w|_1} \circ F_c.
\]
As a consequence and summary of the above calculations, we obtain the following lemma.

\begin{lemma}\label{lem:relationships} Let $w,w'\in \ZZ_2^\N$ be infinite words and let $n\in \N$. We have the following mapping relations 
\[
\iota_w \circ s_{w'}=s_{w+w'},
\]
\[
\sigma \circ s_w = s_{\sigma(w)} \circ \sigma, \sigma \circ \iota_w = \iota_{\sigma(w)} \circ \sigma.
\]
Also, if $w\in \cC_X^n$ and $w'\in \ZZ_2^\N$ we have
\[
F_{w} \circ \iota_{w'} = \iota_{w'|_n} \circ F_{w}.
\]

Moreover, $\Pi \circ s_w$ is the identity, each $\iota_w$ is an isometry, and each $s_w$ is an isometric embedding.
\end{lemma}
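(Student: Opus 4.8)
The plan is to verify Lemma \ref{lem:relationships} by collecting the identities already established in the preceding discussion and checking the only two that require a short computation. Most of the asserted relations are stated verbatim in the paragraphs just above the lemma, so the proof is largely bookkeeping: the relation $\iota_w\circ s_{w'}=s_{w+w'}$ was derived from $\iota_i\circ s_j = s_{i+j}$ on $X_1$ followed by taking inverse limits via iterated application of Lemma \ref{lem:morphism}; the relations $\sigma\circ s_w = s_{\sigma(w)}\circ\sigma$ and $\sigma\circ\iota_w=\iota_{\sigma(w)}\circ\sigma$ are immediate from the coordinate formulas $\sigma(x_1,x_2,\dots)=(x_2,x_3,\dots)$, $s_w(x_1,x_2,\dots)=(s_{w_1}(x_1),s_{w_2}(x_2),\dots)$, and $\iota_w(x_1,x_2,\dots)=(\iota_{w_1}(x_1),\iota_{w_2}(x_2),\dots)$, since deleting the first coordinate visibly commutes with applying $s$ or $\iota$ coordinatewise and shifts the indexing word by $\sigma$; and the final sentence repeats the facts that $\Pi\circ s_w=\mathrm{Id}$, that each $\iota_w$ is an isometry, and that each $s_w$ is an isometric embedding, all of which were justified above using the cubical left inverses $\pi_n$ and $\Pi$ together with Lemma \ref{lem:cubicallemma}.

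The one point deserving genuine attention is the conjugation formula $F_w\circ\iota_{w'}=\iota_{w'|_n}\circ F_w$ for a finite word $w\in\cC_X^n$. First I would treat the base case $n=1$, i.e. $F_c\circ\iota_{w'}=\iota_{w'|_1}\circ F_c$ for a single cube $c\in\cC_{X_1}$. Recall $F_c(x_1,x_2,\dots)=(S_c^{-1}(\pi(x_1)),x_1,x_2,\dots)$ and $\iota_{w'}$ acts coordinatewise by $\iota_{w'_k}$ on the $k$-th coordinate. Applying $\iota_{w'}$ first and then $F_c$ produces, in the new zeroth slot, $S_c^{-1}(\pi(\iota_{w'_1}(x_1)))$, which equals $S_c^{-1}(\pi(x_1))$ because $\pi\circ\iota_i=\pi$ for $i=0,1$; the remaining slots are $\iota_{w'_1}(x_1),\iota_{w'_2}(x_2),\dots$. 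Applying $F_c$ first and then $\iota_{w'|_1}$ — where $w'|_1=(0,w'_1,w'_2,\dots)$ — gives $\iota_0(S_c^{-1}(\pi(x_1)))=S_c^{-1}(\pi(x_1))$ in slot zero (since $\iota_0=\mathrm{Id}$) and $\iota_{w'_1}(x_1),\iota_{w'_2}(x_2),\dots$ afterward. The two outputs agree, establishing the $n=1$ case. The general case follows by induction: writing $F_w=F_{c_1}\circ F_{w''}$ with $w''$ of length $n-1$, one applies the inductive hypothesis $F_{w''}\circ\iota_{w'}=\iota_{w'|_{n-1}}\circ F_{w''}$ and then the base case $F_{c_1}\circ\iota_{w'|_{n-1}}=\iota_{(w'|_{n-1})|_1}\circ F_{c_1}$, noting $(w'|_{n-1})|_1=w'|_n$.

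I do not anticipate a serious obstacle here; the main (mild) care needed is keeping the index-shifting notation $w'|_n$ straight — remembering that $w'|_n$ prepends $n$ zeros and that $\iota_0=\mathrm{Id}$, which is exactly what makes the newly created coordinates from $F_w$ untouched by the conjugated automorphism. I would therefore present the proof as: (1) cite the earlier derivations for the relations $\iota_w\circ s_{w'}=s_{w+w'}$, $\sigma\circ s_w=s_{\sigma(w)}\circ\sigma$, $\sigma\circ\iota_w=\iota_{\sigma(w)}\circ\sigma$, $\Pi\circ s_w=\mathrm{Id}$, and the isometry statements; (2) give the one-line coordinate computation for the $n=1$ conjugation identity using $\pi\circ\iota_i=\pi$ and $\iota_0=\mathrm{Id}$; (3) induct on $|w|$ to get the general conjugation identity. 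This is essentially a summary-and-verification proof, so it should be short.
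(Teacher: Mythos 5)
Your proposal is correct and follows essentially the same route as the paper, which derives each relation in the paragraphs immediately preceding the lemma and then records the lemma as a summary. You correctly supply the one step the paper leaves implicit — the induction from the $n=1$ case $F_c\circ\iota_{w'}=\iota_{w'|_1}\circ F_c$ (using $\pi\circ\iota_i=\pi$ and $\iota_0=\mathrm{Id}$) to general $F_w$ via $(w'|_{n-1})|_1=w'|_n$.
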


Let $p = \frac{\log(10)}{\log(3)}>1$; this will be the Hausdorff dimension of $(X,d_0)$. In fact, we have the following.

\begin{lemma}\label{lem:notloewner}
The space $(X,d_0, \mathcal{H}^p)$ is Ahlfors $p$-regular but is \textbf{not} $p$-Loewner. 
\end{lemma}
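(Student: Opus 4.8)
The plan is to exhibit the failure of the Loewner property by finding pairs of continua whose separation is small (bounded below) while the combinatorial/conformal modulus of the connecting curve family degenerates, or equivalently, by finding two continua arbitrarily close together in relative distance for which every connecting curve must be very long — violating the quantitative connectivity that Loewner spaces enjoy. The natural mechanism here is the ``pillow'' structure: inside each doubled central square at scale $3^{-n}$, there are two sheets glued only along the boundary of that square. Pick a point $x$ deep inside the top copy of some central pillow $P$ and a point $y$ in the corresponding position inside the bottom copy. Then $d_0(x,y)$ is comparable to the side length $\ell$ of $P$ (since any path from $x$ to $y$ must exit through $\partial P$), yet $x$ and $y$ each lie in continua of diameter $\approx \ell$ that are ``linked'' in the sense required by the Loewner estimate, with relative distance $\Delta(E,F) = \mathrm{dist}(E,F)/\min(\diam E, \diam F)$ bounded above by an absolute constant.

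The key steps, in order: First, recall from Lemma~\ref{lem:IFS} (or Theorem~\ref{thm:AEB}) that $(X,d_0,\mathcal H^p)$ is Ahlfors $p$-regular with $p = \log 10/\log 3$; this is the only ``if'' half, and it is immediate. Second, set up the obstruction: fix a pillow $P$ of some scale $3^{-n}$, i.e. the image under some $F_w$ of the two central squares of $X_1$, and let $\ell = 3^{-n}$ be its side length. Let $E$ be a small continuum (say a sub-square of diameter $c\ell$) centered in the top sheet of $P$, and $F$ the mirror-image continuum centered in the bottom sheet. Show that $\mathrm{dist}_{d_0}(E,F) \geq c'\ell$ because any curve joining them must cross $\partial P$, and that $\diam E = \diam F = c\ell$, so $\Delta(E,F) \leq C_0$ is an absolute constant independent of $n$. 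Third, estimate the modulus $\mathrm{Mod}_p(E,F;X)$ of the curve family joining $E$ to $F$: every such curve passes through the ``neck'' $\partial P$, which is a copy of $\partial([0,1]^2)$ rescaled by $\ell$ — essentially a $1$-dimensional set — so $\mathcal H^p(\partial P) = 0$ and, more to the point, one can build admissible densities supported near $\partial P$ of arbitrarily small $L^p$ mass, or invoke the standard fact that modulus of curves through a set of $p$-capacity zero (here a set of Hausdorff dimension $1 < p$) is zero. Hence $\mathrm{Mod}_p(E,F) = 0$ while $\Delta(E,F) \leq C_0$, contradicting the Loewner inequality $\mathrm{Mod}_p(E,F) \geq \phi(\Delta(E,F)) > 0$.

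I expect the main obstacle to be step three: carefully justifying that the connecting curve family has zero $p$-modulus. The cleanest route is to observe that the neck $N = \partial P$ separates $E$ from $F$ in $X$, has finite $\mathcal H^1$-measure, hence Hausdorff dimension $1$; since $p > 1$, a set of dimension strictly less than $p$ has zero $p$-modulus of curves meeting it (this is standard — e.g. one can cover $N$ by balls $B_i$ of radius $r_i$ with $\sum r_i^{p-1}$ small and take $\rho = \sum r_i^{-1}\mathbf 1_{B_i}$, which is admissible for any curve hitting $N$ and has $\int \rho^p \lesssim \sum r_i^{p-1}$ small by Ahlfors regularity). One must make sure the $B_i$ are taken small enough that curves from $E$ to $F$ genuinely must enter them — this follows since $E, F$ are at definite distance from $N$. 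An alternative, avoiding modulus entirely, is to note that $(X,d_0)$ contains the ``necks'' as local cut sets and derive a direct contradiction with quasiconvexity-type consequences of the Loewner condition, but the modulus argument is the standard and most robust one; either way the geometry of the doubled central square is exactly what breaks the Loewner estimate.
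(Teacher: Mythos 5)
Your overall strategy---attack the Loewner property by exhibiting a degenerate curve family forced through the ``necks'' of the pillows---is a reasonable first instinct, but the key modulus estimate in Step 3 is wrong, and the whole approach collapses there.

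The concrete error is in the computation of $\int \rho^p\,d\mathcal{H}^p$ for $\rho = \sum_i r_i^{-1}\mathbf{1}_{B_i}$. By Ahlfors $p$-regularity, $\mathcal{H}^p(B_i) \approx r_i^p$, so (assuming bounded overlap) $\int \rho^p \lesssim \sum_i r_i^{-p}\cdot r_i^p = \sum_i 1$, i.e.\ the \emph{number} of covering balls---not $\sum_i r_i^{p-1}$ as you claim. This quantity does not go to zero as the covering is refined; in fact it diverges. Replacing this single-scale density by the logarithmic-annulus trick does not rescue it either: for a set $N$ with $\mathcal{H}^p(N_\delta(N)) \approx \ell\cdot \delta^{p-1}$ (which is what one gets for the $1$-dimensional neck), the same computation shows the $L^p$-norm of the candidate density blows up rather than tending to zero. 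The logarithmic trick works precisely when the separating set has codimension $p$ (i.e.\ is ``point-like''), not codimension $\approx 1$.

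The conceptual claim underneath---``a set of Hausdorff dimension $1<p$ has zero $p$-modulus of curves meeting it''---is false. A $1$-dimensional separator does not in general have zero $p$-modulus of transversal curves when $p$ is the Ahlfors regularity exponent. A clean counterexample: $T\times\mathbb{R}$, where $T$ is a tripod, is a $2$-regular $2$-Loewner PI space, yet the branch line $\{v\}\times\mathbb{R}$ (a $1$-dimensional set) separates two of the sheets, and every curve joining them crosses it. If your claim were correct, $T\times\mathbb{R}$ would fail to be Loewner, which is false. The relevant threshold for $p$-modulus negligibility in a $p$-regular space is dimension $0$ (or more precisely, a suitable logarithmic capacity condition), not dimension $<p$.

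The paper's actual proof takes a completely different route: it considers the $1$-Lipschitz coordinate map $x\colon X\to[0,1]$ and shows that the pushforward $x_*\mathcal{H}^p$ is \emph{singular} with respect to Lebesgue measure, because the doubling of middle squares forces $\mu(I_{\mathrm{mid}})=\tfrac{4}{10}\mu(I)\neq\tfrac13\mu(I)$ on triadic intervals. The obstruction to the Poincar\'e inequality is then supplied by the standard Lemma~\ref{lem:notPI} (a Lipschitz surjection onto an interval must push a PI measure to an absolutely continuous one). This is a measure-theoretic obstruction---``too much measure piles up in the middle''---rather than a geometric/modulus one; your instinct that the necks are the source of the trouble is correct, but the way they cause trouble is through the multiplicative skewing of measure, not through a thin separating set.
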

\begin{proof} 
The Ahlfors regularity follows immediately from Lemma \ref{lem:IFS}.  

For the second part, recall the projection $\Pi=\pi_{\infty,0}:X\rightarrow \mathbb{R}^2$ and the ``word encoding'' of $X$ from subsection \ref{subsec:wordencoding}. Let $x:X \to \R$ denote the post-composition of $\Pi$ with the $x$-coordinate map from the square to the line. 

Given a triadic interval $I$ in $[0,1]$ of length $3^{-n}$, write $I_{\text{mid}}$ for the middle third of $I$ and $N_I$ for the number of words $w\in \cC_{X_1}^n$ for which $x(X_w)=I$. Because middle squares are doubled in the construction, we have $N_{I_{\text{mid}}}= 4 N_I$ for all triadic $I$.

Now consider the pushforward measure $\mu = x_{*}\mathcal{H}^p$ on $\R$. The similarities of $X$ yield that $\HH^p(X_w)=3^{-p|w|}\HH^p(X)$ for any word $w\in \cC_{X_1}^*$. Thus, for a triadic interval $I$ of length $3^{-n}$, we have $\mu(I) = N_I 3^{-pn} \HH^p(X)$. It then follows from the above that
$$ \mu(I_{\text{mid}}) = \frac{4}{10}\mu(I)$$
for every triadic interval $I$ in $[0,1]$.

It then follows from the Lebesgue differentiation theorem that $\mu$ must be singular to Lebesgue measure on $[0,1]$. Since $x$ is Lipschitz and surjective onto the unit interval, it follows from Lemma \ref{lem:notPI} that it is not possible for $(X,d_0,\mathcal{H}^p)$ to satisfy a Poincar\'e inequality. Since the space is Ahlfors $p$-regular, it can therefore not be $p$-Loewner.
\end{proof}

\subsection{Combinatorial Loewner Property for $X$}
The space $X$ is approximately self-similar and satisfies the Combinatorial Loewner Property. Since we only use this fact, and not the conditions directly, we leave their definitions to \cite{AEBb}. These results follow from \cite{AEBb} after we make some remarks on the relationships between the construction of a pillow space here and there \cite[Example 4.3]{AEBb}.

\begin{proof}[Proof that results from \cite{AEBb} imply Theorem \ref{thm:AEB}] In \cite{AEBb}, the pillow space is constructed using an iterated graph system. The data for this system are:
\begin{enumerate}
    \item An oriented graph $G_1=(V_1,E_1)$.
    \item For each edge $[x,y]\in E_1$ an edge type $t\in \mathcal{T}$.
    \item For each type $t\in \mathcal{T}$ a gluing rule $I_t\subset V_1\times V_1$. 
\end{enumerate}
Using these, a sequence of graphs $G_n$ is constructed recursively as follows. The graph $G_{n+1}=(V_{n+1}, E_{n+1})$, where $V_{n+1}=V_1^{n+1}$ and $[v_1\dots v_{n+1}, w_{1}\dots w_{n+1}]\in E_{n+1}$ is an edge of type $t\in \mathcal{T}$ if and only if either
\begin{enumerate}
    \item $v_1\dots v_{n}=w_1\dots w_{n}$ and $[v_{n+1},w_{n+1}]\in E_1$ is an edge of type $t$; or
    \item $[v_1\dots v_{n},w_1\dots w_{n}]\in E_n$ is an edge of type $t$ and $[v_{n+1},w_{n+1}]\in I_t$.
\end{enumerate}
The graphs $G_n$ are called replacement graphs and a metric space is obtained by taking a limit of re-scaled path metrics.

In our case, $V_1 = \cC_X$ and $E_1$ is the dual graph of $X_1$, that is, two cubes (squares) $c,c'\in V_1$ are connected by an edge if they share a face.  In \cite[Example 4.3]{AEBb}, these squares are indexed by the set $\{(i,j,1): i,j\in \{1,2,3\}\}\cup \{(2,2,2)\}$, where the first two indices describe the Cartesian position in the grid, and the last component described which sheet we are in.  There are two edge types, horizontal ($t=1$) and vertical ($t=2$), depending on if squares intersect vertically or horizontally, and edges are oriented left-right and bottom-top. For the horizontal gluing rule, we set $I_1=\{((3,j,1),(1,j,1)) : j \in \{1,2,3\}\}$ and for vertical edges we set $I_2=\{((j,3,1),(j,1,1)) : j \in \{1,2,3\}\}$. These correspond to gluing the right and left faces, or the top and bottom faces. This replacement rule is a special case of a \emph{cubical replacement rule} defined in \cite[Section 4]{AEBb}.

Given this description, it is a bit technical but rather straightforward to see that the dual graphs of the cubical complexes $X_n$ and the replacement graphs $G_n$ are isomorphic. In \cite{AEBb} the limit space  is constructed as a limit of the metrics $3^{-n}d_{G_n}$. This limit can be taken as a Gromov-Hausdorff limit, or more explicitly using symbolic calculus and a limit (supremum) as in \cite[Definition 7.10]{AEBb}. Further, from the construction of our metric, it is not too difficult to see that the re-scaled path metrics $3^{-n}d_{G_n}$ converge to a space bi-Lipschitz to $X$ and thus identify our limit space $X$ with the limit space $K$ from \cite[Definition 7.10]{AEBb}. From \cite[Theorem 1.1 and Proposition 1.7]{AEBb} we get that $X$ is approximately self-similar and that it satisfies the Combinatorial Loewner Property. We remark, that the crucial geometric feature implying CLP is the dihedral symmetry group, which ensures a type of homogeneity property for discrete moduli. (The bi-Lipschitz deformation comes from the fact that in the graphs $G_n$ the path-metrics correspond to the $\ell_1$-metric on each sheet, but in $X$ we are using the $\ell_2$-metric.) 
\end{proof}

\section{Blowups}\label{sec:blowup}
The goal of this section is to discuss a notion of ``blowup'' that we will use throughout the rest of the paper. 

For this section, fix a substitution complex $(X_1, X_1, X_0, S, \pi)$ and assume that $X_0$ has only a single cube. Let $(X,d_0)$ denote the the inverse limit of this substitution complex, as defined in subsection \ref{subsec:iteration}. We will freely use the notation introduced in Section \ref{sec:cubical} in this section, especially the word encoding for $X$ introduced in subsection \ref{subsec:wordencoding}.

\begin{definition}\label{def:blowup}
Fix $x\in X$, an infinite word $w\in \cC_{X_1}^\N$ for which $F(w)=x$, and a sequence of positive numbers $\lambda_n \rightarrow 0$. Suppose that $d$ is a metric on $X$ (inducing the same topology as $d_0$). Let
$$ d_n(p,q) = \lambda_n^{-1} d(F_{w(n)}(p), F_{w(n)}(q))$$
If $d_n \colon X\times X \rightarrow [0,\infty)$ has a subsequence that converges uniformly to a metric $\hat{d}$ on $X$, then we call $\hat{d}$ a \emph{blowup of $d$ at $x$}.

Similarly, if $\mu$ is a measure on $(X,d)$, $\tau_n\rightarrow 0$, and the sequence
$$ \mu_n := \tau_n^{-1} (\sigma^n_*(\mu|_{X_{w(n)}}))$$
has a subsequential limit in the weak* sense ($\sigma^n$ denoting the $n$-fold iterate of the map $\sigma$), then we call that limit a \emph{blowup of $\mu$ at $x$}.
\end{definition}

Blowup metrics as defined here are somewhat different from (pointed) Gromov--Hausdorff blowups, as discussed in subsection \ref{subsec:prelimcheeger}.
A reader interested in comparing these can verify that our blowups are isometric to \emph{subsets} of the  Gromov--Hausdorff blowups. Gromov--Hausdorff blowups have the undesirable feature that even for self-similar spaces they fail to be isometric to the space itself, since for a compact starting space the Gromov--Hausdorff blowup will typically be unbounded. As we are interested in obtaining metrics on $X$, we take the above approach to blowups. By fixing the parametrizing space $X$ we can also avoid a number of technical issues in the definitions of Gromov--Hausdorff limits (e.g., it is clear with our definition that the topological type does not change).

It is immediate from the self-similarity that each blowup of $(X,d_0)$ is isometric to a rescaling of $(X,d_0)$.  The following lemma characterizing doubling measures on $(X,d_0)$ will be useful.

\begin{lemma}\label{lem:balls}

Consider the metric space $(X,d_0)$. There exists constants $\delta,C>0$ so that the following holds. For every $w\in \cC_{X_1}^*$, there exists an $x\in X_w$ so that
\[
B(x,\delta L^{-|w|})\subset X_w \subset B(x,CL^{-|w|}).
\]
In particular, a measure $\mu$ is doubling if and only if there exists a constant $D>1$ so that for any words $w,v$ with $||w|-|v||\leq 1$ and $d(X_w, X_v)\leq L^{-\max\{|w|,|v|\}}$, we have
\begin{equation}\label{eq:tiledoubling}
D^{-1}\leq \frac{\mu(X_{v})}{\mu(X_w)}\leq D.
\end{equation}
\end{lemma}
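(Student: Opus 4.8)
The plan is to prove the two assertions of Lemma~\ref{lem:balls} separately: first the geometric claim that each ``tile'' $X_w$ is comparable to a ball of radius $\sim L^{-|w|}$, and then the characterization of doubling measures in terms of the comparison \eqref{eq:tiledoubling} of masses of neighboring tiles of comparable generation.

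\textbf{Step 1: Tiles are roundish.} First I would use the self-similarity. Since $X_w = F_w(X)$ and $F_w$ is an $L^{-|w|}$-similarity (Lemma~\ref{lem:IFS}, or rather the general statement preceding it), it suffices to prove the statement for the empty word, i.e., to find a single point $x_0 \in X$ and constants $\delta, C > 0$ with $B(x_0, \delta) \subseteq X \subseteq B(x_0, C)$; applying $F_w$ and using that it scales distances by exactly $L^{-|w|}$ then gives the general statement with $x = F_w(x_0)$. The upper inclusion $X \subseteq B(x_0, C)$ is automatic since $X$ is compact (take $C = \diam(X) + 1$, say, or just $C = \diam(X)$ with a non-strict ball, adjusting constants). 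For the lower inclusion, I need a point $x_0$ that is ``deep inside'' $X$, i.e., at definite distance from anything problematic. Here I would pick $x_0$ to be (the image under $\Pi$ of, or more precisely the point of $X$ lying over) the center of the square $X_0$, or any interior point of a sheet that is bounded away from the boundary $\partial X_0$; since $\Pi \colon X \to X_0$ is $1$-Lipschitz and surjective and a sheet is isometrically embedded, a ball of radius $\delta$ around such a point inside one sheet stays inside $X$, giving $B(x_0,\delta) \subseteq X = X_\emptyset$. This is elementary; the only mild care needed is to confirm the metric $d_0$ restricted to a neighborhood of an interior point of a sheet agrees with the Euclidean metric, which follows from the description of $d_0$ as the inverse-limit geodesic metric and the fact that nearby the point no pillows intervene.

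\textbf{Step 2: ``Doubling'' $\Rightarrow$ \eqref{eq:tiledoubling}.} Given a $C_0$-doubling measure $\mu$ and words $w, v$ with $\bigl||w| - |v|\bigr| \le 1$ and $d(X_w, X_v) \le L^{-\max\{|w|,|v|\}}$, I would use Step 1: there are $x_w \in X_w$, $x_v \in X_v$ with $B(x_w, \delta L^{-|w|}) \subseteq X_w \subseteq B(x_w, C L^{-|w|})$ and similarly for $v$. Then $X_v \subseteq B(x_w, C' L^{-|w|})$ for a controlled $C'$ (using the distance bound and $\bigl||w|-|v|\bigr|\le 1$), and also $B(x_w, \delta L^{-|w|}) \subseteq X_w$. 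Chaining the doubling inequality a bounded number $N = N(C', \delta, L)$ of times gives $\mu(X_v) \le \mu(B(x_w, C'L^{-|w|})) \le C_0^N \mu(B(x_w, \delta L^{-|w|})) \le C_0^N \mu(X_w)$, and symmetrically; so \eqref{eq:tiledoubling} holds with $D = C_0^N$.

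\textbf{Step 3: \eqref{eq:tiledoubling} $\Rightarrow$ doubling.} This is the substantive direction. Given \eqref{eq:tiledoubling} with constant $D$, I want to bound $\mu(B(x,2r))$ by $\mu(B(x,r))$. The strategy: choose $n$ with $L^{-n} \approx r$ (say $c_1 L^{-n} \le r < c_1 L^{-n+1}$ for the right $c_1$). By Step 1, $B(x,r)$ contains some tile $X_w$ with $|w| = n + O(1)$ (pick $w$ so that $x \in X_w$ and $X_w$ is small enough to sit inside $B(x,r)$; such $w$ exists because tiles containing $x$ shrink geometrically and are comparable to balls). On the other side, $B(x, 2r)$ is covered by a bounded number of tiles $X_v$ with $|v| = n + O(1)$: indeed the tiles of generation $n'$ (a fixed $n' = n + O(1)$) that meet $B(x,2r)$ have diameter $\lesssim L^{-n'} \lesssim r$, are ``roundish'' with a bounded overlap at comparable scales, and a volume/packing argument using Ahlfors regularity of $(X,d_0)$ — available from Lemma~\ref{lem:IFS} — bounds their number by a constant $M$ independent of $x, r$. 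Now connect $X_w$ to each such covering tile $X_v$ by a chain of tiles, each of generation within $O(1)$ of $n$ and each consecutive pair within distance $L^{-\max}$ of each other, of bounded length $K = K(M, L, \ldots)$; here I use that $(X, d_0)$ is connected (indeed quasiconvex/geodesic) and of bounded geometry, so any two generation-$n'$ tiles inside a ball of radius $\lesssim L^{-n'}$ can be joined by a bounded-length such chain — this is where one must be slightly careful to stay within generations differing by at most $1$ at each step, possibly by refining/coarsening along the way, but the number of steps stays bounded because everything happens at a single scale. Applying \eqref{eq:tiledoubling} along each chain gives $\mu(X_v) \le D^K \mu(X_w)$ for every covering tile, hence $\mu(B(x,2r)) \le \sum_v \mu(X_v) \le M D^K \mu(X_w) \le M D^K \mu(B(x,r))$, so $\mu$ is doubling with constant $M D^K$.

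\textbf{Main obstacle.} The hard part is Step~3, specifically the combinatorial chaining argument: one must argue that inside a ball of radius $\sim L^{-n}$ any two tiles of generation $\sim n$ can be connected by a chain of tiles with each consecutive pair satisfying the hypotheses of \eqref{eq:tiledoubling} (generations differing by at most $1$, and close in the metric), with the number of links bounded independently of $n$ and $x$. This requires the ``bounded geometry'' of the cubical complexes $X_n$ — bounded valence in the dual graph, the fact that $(X,d_0)$ is geodesic, and the roundness from Step~1 — but assembling these into the precise chain with the generation-difference-$\le 1$ constraint maintained at every step (the natural chain through the cube complex stays at a \emph{single} generation, so this is actually fine, and the generation-$\pm1$ freedom in \eqref{eq:tiledoubling} is only needed to transition between the scale of $B(x,r)$ and the slightly different scale of the covering tiles) is the one place demanding genuine care. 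Everything else reduces to Ahlfors regularity and the self-similar scaling of the $F_w$.
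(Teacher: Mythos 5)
Your proposal follows essentially the same route as the paper's proof: use self-similarity to reduce the roundness of tiles to finding a single well-placed basepoint, deduce the forward implication by iterating the doubling inequality a bounded number of times, and prove the converse by covering $B(x,2r)$ by boundedly many tiles of generation $\approx n$ (boundedness from Ahlfors regularity) and chaining \eqref{eq:tiledoubling} across them. The paper finds the basepoint by taking a ball $B(y,\delta)\subset O$ where $O$ is the open set from the open set condition in Lemma~\ref{lem:IFS}, while you pick a point deep inside a sheet; since $O=\Pi^{-1}(\mathrm{Int}\,[0,1]^2)$ these are equivalent. The paper compresses your Steps~2 and~3 to ``follows easily,'' but the chaining you describe is exactly what is meant, and you are right that the chain can be kept at a single generation so the $\pm 1$ allowance is only needed to align scales.

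One caution on Step~1 as you phrase it. The observation ``$B(x_0,\delta)\subseteq X=X_\emptyset$'' is vacuous, and ``using that $F_w$ scales distances exactly by $L^{-|w|}$'' only gives you $F_w(B(x_0,\delta))=\{z\in X_w : d(z,F_w(x_0))<\delta L^{-|w|}\}\subseteq X_w$, which is again trivially a subset of $X_w$. What the lemma asserts is the inclusion of the \emph{full} ball of $X$, $B_X(F_w(x_0),\delta L^{-|w|})\subset X_w$, and this requires ruling out points of $X\setminus X_w$ within that distance of $F_w(x_0)$. Your basepoint choice does make this work — project by the $1$-Lipschitz map $\pi_{\infty,|w|}$ and note that, because $x_0$ sits at $d_0$-distance $\geq\delta$ from $\Pi^{-1}(\partial[0,1]^2)$, the image of $F_w(x_0)$ lies at distance $\geq\delta L^{-|w|}$ from $\partial c_w$ in $X_{|w|}$, so any $z$ within that distance of $F_w(x_0)$ projects into $\mathrm{Int}(c_w)$ and hence lies in $X_w$ — but you should actually give this argument rather than invoke the scaling of $F_w$ alone. (The paper's equation $B(F_w(y),\delta L^{-|w|})=F_w(B(y,\delta))$ asserts the same nontrivial fact without comment, so the omission is shared, but your write-up flags the wrong thing as ``elementary.'')
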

\begin{proof} Recall that $X$ satisfies the open set condition with respect to the similarities $\{F_c: c\in\cC_{X_1}\}$. Let $O$ be the open set granted by that condition, and let $B(y,\delta)\subset O$ be any ball inside $O$. We have $B(y,\delta)\subset O \subset X \subset B(y,C)$ for $C=\diam(X)$. Since $F_w$ is a similarity with ratio $L^{-|w|}$, we get 
\[
B(F_w(y),\delta L^{-|w|})=F_w(B(y,\delta))\subset F_w(X) =X_w \subset F_w(B(y,C)) \subset B(F_w(x),CL^{-|w|}).
\]
It follows easily from this that if a measure $\mu$ on $(X,d_0)$ is doubling, then \eqref{eq:tiledoubling} holds.

Conversely, suppose that \eqref{eq:tiledoubling} holds for $\mu$. By Lemma \ref{lem:IFS}, $(X,d_0)$ is Ahlfors regular. Consequently, if $B(x,r)$ is a ball in $X$, then $B(x,2r)$ can intersect only finitely many $X_w$ with $L^{-|w|}\approx r$. On the other hand $B(x,r)$ also contains some $X_v$ with $L^{-|v|}\approx r$. The desired doubling bound follows easily.
\end{proof}

Next, we will study blowups of metrics $d$ that are quasisymmetric to $d_0$ and measures $\mu$ that are doubling.  Some similar results for Gromov--Hausdorff blowups have appeared in \cite{angela,Wenbo,MT}. 

\begin{lemma}\label{lem:blowup}
Suppose that $d$ is a metric on $X$, quasisymmetric (by the identity map) to $d_0$, and $\mu$ is a doubling measure on $(X,d)$. Let $x=F(w)\in X$ for $w\in \cC_{X_1}^\N$. 

\begin{enumerate}[(i)]
\item\label{item:blowupd} There exist a blowup of $(X,d)$ at $x$. In fact, if $\lambda_n\rightarrow 0$ is any sequence, then there is a sequence $k_n\in \mathbb{N}$ such that the metrics
\begin{equation}\label{eq:dn}
d_n(p,q)= \lambda_n^{-1}d(F_{w(k_n)}(p),F_{w(k_n)}(q)) 
\end{equation}
sub-converge to a metric on $X$.

\item\label{item:blowupqs} Every blowup of $(X,d)$ at $x$ is quasisymmetric (by the identity) to $(X,d_0)$. 

\item\label{item:blowupmu}There exists a blowup of $\mu$ at $x$, and every blowup of $\mu$ at $x$ is doubling.

\item\label{item:blowupAR} If $(X,d,\mu)$ is Ahlfors $Q$-regular, then every blow-up of $X$ at $x$ is Ahlfors $Q$-regular.

\item\label{item:blowuploewner} If $(X,d,\mu)$ is $Q$-Loewner, then every blow up of $X$ at $x$ is $Q$-Loewner.
\end{enumerate}
\end{lemma}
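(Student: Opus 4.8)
The plan is to prove the five parts of Lemma \ref{lem:blowup} in order, with parts (i)--(iii) being the substantive work and (iv)--(v) following quickly from quasisymmetric invariance results already established.

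\medskip

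\noindent\textbf{Part (i): existence of a blowup metric.}
First I would observe that the metrics $d_n$ defined in \eqref{eq:dn}, viewed as functions on $X \times X$, are uniformly bounded and equicontinuous after a suitable normalization of the $\lambda_n$, so that one can apply Arzel\`a--Ascoli. The issue is that for an arbitrary choice of $\lambda_n$ the rescaled metrics $d(F_{w(n)}(\cdot),F_{w(n)}(\cdot))$ might degenerate (collapse to $0$ or blow up). The fix, encoded in the statement, is that one is allowed to pass to a subsequence of \emph{scales} $k_n$ of the word $w$: given $\lambda_n$, choose $k_n$ so that $\operatorname{diam}_d(X_{w(k_n)}) \approx \lambda_n$, which is possible because $\operatorname{diam}_d(X_{w(k)}) \to 0$ as $k \to \infty$ (since $d$ induces the same topology as $d_0$ and the sets $X_{w(k)}$ shrink to the point $x$) while consecutive ratios $\operatorname{diam}_d(X_{w(k+1)})/\operatorname{diam}_d(X_{w(k)})$ are bounded above and below --- this last fact uses the quasisymmetry of $d$ with $d_0$ together with Lemma \ref{lem:balls}, since adjacent tiles have comparable $d_0$-size and hence, by quasisymmetry, comparable $d$-size. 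With $\lambda_n \approx \operatorname{diam}_d(X_{w(k_n)})$, the metrics $d_n$ have $\operatorname{diam}_{d_n}(X) = \operatorname{diam}_d(X_{w(k_n)})/\lambda_n \approx 1$; uniform equicontinuity with respect to $d_0$ follows because, by quasisymmetry with a fixed $\eta$, the $d$-geometry of $X_{w(k_n)}$ is controlled by its $d_0$-geometry (a snowflake-type estimate: on a fixed scale, $\eta$-quasisymmetry gives H\"older-type two-sided bounds between the rescaled $d$ and $d_0$). Then Arzel\`a--Ascoli gives a subsequential uniform limit $\hat d$, and one checks $\hat d$ is a genuine metric: symmetry and triangle inequality pass to limits, and $\hat d$ is nondegenerate (positive off the diagonal) precisely because of the two-sided quasisymmetric comparison keeping distances from collapsing.

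\medskip

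\noindent\textbf{Part (ii): the blowup is quasisymmetric to $(X,d_0)$.}
Here I would use Lemma \ref{lem:convexcone}: the set $\cC_{\eta,d_0}$ of metrics $\eta$-quasisymmetric to $d_0$ (via the identity) is a closed convex cone in $D(X)$. The key point is that $\cC_{\eta,d_0}$ is also invariant under the rescaling operations $d \mapsto \lambda^{-1} d(F_c(\cdot),F_c(\cdot))$ up to adjusting $\eta$ --- in fact, since $F_c$ is a similarity of $(X,d_0)$, precomposing by $F_{w(k_n)}$ and rescaling preserves membership in $\cC_{\eta,d_0}$ with the \emph{same} $\eta$ (the $d_0$-ratios appearing in Definition \ref{def:quasisym} are unchanged by a similarity, and the scalar $\lambda_n^{-1}$ cancels in the ratio on the $d$-side). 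Therefore each $d_n \in \cC_{\eta,d_0}$. Since $\cC_{\eta,d_0}$ is closed under uniform limits, $\hat d \in \cC_{\eta,d_0}$, which is exactly the claim.

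\medskip

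\noindent\textbf{Part (iii): existence and doubling of blowup measures.}
For existence, given a sequence $\lambda_n$ (reusing the same scales $k_n$ as for the metric), I normalize $\mu_n = \tau_n^{-1}\sigma^{k_n}_*(\mu|_{X_{w(k_n)}})$ by taking $\tau_n = \mu(X_{w(k_n)})$, so that $\mu_n(X) = 1$. Since $X$ is compact, the space of Borel probability measures on $X$ is weak* compact, so a subsequential weak* limit $\hat\mu$ exists. For the doubling property, I would use the characterization in Lemma \ref{lem:balls}: a measure on $(X,d_0)$ is doubling iff the tile-ratio bound \eqref{eq:tiledoubling} holds for adjacent tiles of comparable generation. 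The doubling constant of $\mu$ gives, via \eqref{eq:tiledoubling}, uniform two-sided bounds $D^{-1} \le \mu(X_{w(k_n)v})/\mu(X_{w(k_n)v'}) \le D$ for adjacent $v,v'$ of the same length; applying $\sigma^{k_n}_*$ and dividing by $\tau_n$, these become the same bounds for $\mu_n(X_v)/\mu_n(X_{v'})$, with $D$ independent of $n$. Passing to the weak* limit (using that $X_v$ are closed with $\mu$-null boundary, or approximating by continuous functions supported near $X_v$), $\hat\mu$ satisfies \eqref{eq:tiledoubling} with the same $D$, hence is doubling by Lemma \ref{lem:balls}. One subtlety to handle carefully: weak* convergence only gives $\limsup \mu_n(K) \le \hat\mu(K)$ for compact $K$ and $\liminf \mu_n(U) \ge \hat\mu(U)$ for open $U$, so to transfer the two-sided tile ratios I need $\hat\mu(\partial X_v) = 0$; this follows because $\partial X_v$ is contained in a union of lower-dimensional cubical faces and the doubling bound forces $\hat\mu$ to give these zero mass (a standard consequence of \eqref{eq:tiledoubling}).

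\medskip

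\noindent\textbf{Parts (iv) and (v).}
If $(X,d,\mu)$ is Ahlfors $Q$-regular, then each rescaled and renormalized copy $(X, d_n, \mu_n)$ is Ahlfors $Q$-regular with a constant independent of $n$ (Ahlfors regularity is scale-invariant). Ahlfors regularity with uniform constant passes to the blowup limit $(X,\hat d,\hat\mu)$: on balls, the lower and upper mass bounds survive weak* and uniform-metric limits because $B_{d_n}(x,r) \to B_{\hat d}(x,r)$ in the appropriate sense (here the fixed parametrizing space $X$ is convenient, and one uses that $\hat\mu$ of a sphere is zero, again from doubling). For (v), the blowup $(X,\hat d,\hat\mu)$ is Ahlfors $Q$-regular by (iv) and quasisymmetric (via the identity) to $(X,d_0)$ by (ii); but $(X,d,\mu)$ is also $Q$-Loewner, hence $Q$-regular, and quasisymmetric to the blowup --- more precisely, the blowup is a ``tangent'' obtained from $(X,d,\mu)$ by rescaling similarities, and $Q$-Loewner is both quasisymmetrically invariant among $Q$-regular spaces (Proposition \ref{prop:quasiinv}) and invariant under rescaling; combining, each $(X,d_n,\mu_n)$ is $Q$-Loewner with uniform data, and the $Q$-Loewner property (equivalently the $Q$-Poincar\'e inequality with uniform constants, by Theorem \ref{thm:LoewnerPI}) is stable under measured Gromov--Hausdorff limits of uniformly PI spaces --- this last stability is the fact recorded in subsection \ref{subsec:prelimcheeger} (Cheeger's result that blowups of PI spaces are PI, applied here to the uniformly-PI family). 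So $(X,\hat d,\hat\mu)$ is $Q$-Loewner.

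\medskip

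\noindent\textbf{Main obstacle.}
The technical heart is Part (i): showing that one can choose the scales $k_n$ so the rescaled $d$-metrics neither collapse nor explode, and that the limit is a bona fide metric of the same topological type. This rests on the uniform comparability of consecutive tile diameters under $d$, which is where the quasisymmetry hypothesis (with a fixed gauge $\eta$) genuinely enters and must be combined with the geometric control from Lemma \ref{lem:balls}. The measure-theoretic subtlety in Part (iii) --- ensuring the blowup measure charges no tile boundaries so that the two-sided tile ratios, and later Ahlfors regularity on balls, transfer through the weak* limit --- is the secondary point requiring care.
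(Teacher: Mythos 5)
Your proposal follows essentially the same route as the paper's proof: choose the blowup scales $k_n$ using quasisymmetry so the rescaled metrics neither collapse nor explode, apply Arzel\`a--Ascoli, transfer the quasisymmetry estimate to the limit (your use of the closed convex cone $\cC_{\eta,d_0}$ from Lemma \ref{lem:convexcone} is a clean way to package what the paper states directly), normalize the blowup measures to probability measures to invoke weak* compactness, and conclude parts (iv)--(v) via Proposition \ref{prop:quasiinv}. Parts (i), (ii), (iv), (v) are fine.

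There is one small logical wrinkle in your Part (iii). You transfer the tile-ratio bound \eqref{eq:tiledoubling} to the blowup measure $\hat\mu$ by arguing that $\hat\mu(\partial X_v)=0$, and you justify the latter as ``a standard consequence of \eqref{eq:tiledoubling}'' for $\hat\mu$ --- but \eqref{eq:tiledoubling} for $\hat\mu$ is exactly what you are in the middle of proving, so as written the argument is circular. The fix is easy: either deduce $\hat\mu(\partial X_v)=0$ from the \emph{uniform} doubling of the approximating measures $\mu_n$ (since $\partial X_v$ is porous, $\sup_n\mu_n(N_\epsilon(\partial X_v))\to 0$ as $\epsilon\to 0$, and then apply the Portmanteau inequality for open sets), or avoid tile-boundary issues entirely by the direct ball argument the paper implicitly relies on: for open balls, $\hat\mu(B(x,2r))\leq\liminf_n\mu_n(B(x,2r))\leq C^2\liminf_n\mu_n(B(x,r/2))\leq C^2\limsup_n\mu_n(\overline{B(x,r/2)})\leq C^2\hat\mu(B(x,r))$. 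This is the standard proof that a weak* limit of uniformly doubling probability measures on a fixed metric space is doubling, and it requires no control on boundary mass. The same remark streamlines your verification of Ahlfors regularity in Part (iv).
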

\begin{proof} 

Let $d$ be any metric on $X$ that is quasisymmetric to $d_0$, $x\in X$, $w\in \mathcal{C}_{X_1}^\mathbb{N}$ with $F(w)=x$. 

We start with \eqref{item:blowupd}. Up to relabeling sequences, it suffices to prove the ``In fact,...'' statement. We are given a sequence $\lambda_n \rightarrow 0$. Pick any two points $a,b\in X$ with $d_0(a,b)=\diam_{d_0}(X),$ which we may rescale to be $1$ without loss of generality.

By the quasisymmetry, there is a constant $C>0$ and, for each $n$, a radius $\alpha_n>0$ such that
\begin{equation}\label{eq:ballS}
B_{d_0}(x,\alpha_n) \subseteq B_d(x,\lambda_n) \subseteq B_{d_0}(x, C\alpha_n).
\end{equation}
Choose $k_n\in\mathbb{N}$ as small as possible so that $ X_{w(k_n)} \subseteq B_{d_0}(x,\alpha_n)$. 
Observe that
\begin{align*}
d_0(F_{w(k_n)}(a), F_{w(k_n)}(b))&= \diam_{d_0}(X_{w(k_n)})\\
&\approx \alpha_n\\
&\approx \diam_{d_0}(B_d(x,\lambda_n)).
\end{align*}
Therefore, using the quasisymmetry, we have
\begin{equation}\label{eq:dnbound}
d(F_{w(k_n)}(a), F_{w(k_n)}(b))
\approx \diam_d(B_d(x,\lambda_n))
\approx \lambda_n.
\end{equation}

The space $(X,d_0)$ is compact, and thus to find a sublimit of $d_n$, it suffices to prove that $d_n$ are uniformly bounded and equicontinuous. Let $c,d\in X$ be a pair of points with $c\neq d$. We may choose $z\in\{a,b\}$ so that $d_0(z,c)\geq \diam_{d_0}(X)/2$. Thus
\[
\frac{d_0(c,d)}{d_0(c,z)}\leq 2, \ \ \text{  and  } \ \ \frac{d_0(c,z)}{d_0(a,b)}\leq 1.
\]

Then, using \eqref{eq:dnbound},
\begin{align*}
d_n(c,d)&\approx \frac{d(F_{w(k_n)}(c),F_{w(k_n)}(d))}{d(F_{w(k_n)}(a),F_{w(k_n)}(b))}=\frac{d(F_{w(k_n)}(c),F_{w(k_n)}(d))}{d(F_{w(k_n)}(c),F_{w(k_n)}(z))}  \frac{d(F_{w(k_n)}(c),F_{w(k_n)}(z))}{d(F_{w(k_n)}(a),F_{w(k_n)}(b))} \\
&\leq \eta\left(\frac{d_0(F_{w(k_n)}(c),F_{w(k_n)}(d))}{d_0(F_{w(k_n)}(c),F_{w(k_n)}(z))}\right) \eta\left(\frac{d_0(F_{w(k_n)}(c),F_{w(k_n)}(z))}{d_0(F_{w(k_n)}(a),F_{w(k_n)}(b))}\right) \leq \eta(2) \eta(1).
\end{align*}
Thus, $\{d_n\}$ is uniformly bounded. Next, we show that $\{d_n\}$ is uniformly equicontinuous. To show this, fix $\epsilon>0$, and a pair of points $c,d\in X$ with $d_0(c,d)\leq \epsilon$. Again, let $z$ be as above. Then, we have $\frac{d_0(c,d)}{d_0(c,z)}\leq 2 \epsilon$, and arguing as above we get
\[
d_n(c,d)\lesssim \eta(2\epsilon)\eta(1).
\]
From this it is direct to see that the maps $d_n:X\times X\to \RR$ are uniformly equicontinuous. Thus, by passing to a subsequence, we get that $d_n$ converges uniformly to some bounded, non-negative function $
\hat{d}:X\times X\to\RR$. This function is \emph{a priori} a pseudometric. We will next show that it is indeed a metric, and that the metric is quasisymmetric to $d_0$. Let $c,d\in X$ be such that $d_0(c,d)\geq \epsilon \diam_{d_0}(X)>0$. Then, for $z$ chosen as above, we get
\[
\frac{d_0(c,d)}{d_0(c,z)}\geq \epsilon, \text{ and } \frac{d_0(c,z)}{d_0(a,b)}\geq \frac{1}{2}.
\]
Using the inverse quasisymmetry bound \eqref{eq:invquas}, we get
\begin{align*}
d_n(c,d)&\approx\frac{d(F_{w(k_n)}(c),F_{w(k_n)}(d))}{d(F_{w(k_n)}(a),F_{w(k_n)}(b))}=\frac{d(F_{w(k_n)}(c),F_{w(k_n)}(d))}{d(F_{w(k_n)}(c),F_{w(k_n)}(z))}  \frac{d(F_{w(k_n)}(c),F_{w(k_n)}(z))}{d(F_{w(k_n)}(a),F_{w(k_n)}(b))} \\
&\geq \frac{1}{\eta\left(\frac{d_0(F_{w(k_n)}(c),F_{w(k_n)}(z))}{d_0(F_{w(k_n)}(c),F_{w(k_n)}(d))}\right) \eta\left(\frac{d_0(F_{w(k_n)}(a),F_{w(k_n)}(b))}{d_0(F_{w(k_n)}(c),F_{w(k_n)}(z))}\right)} \geq \eta(\epsilon^{-1})^{-1} \eta(2)^{-1}.
\end{align*}

Thus, if $c\neq d$, then $d_n(c,d)$ is bounded below independent of $n$. Thus, $\hat{d}(c,d)>0$. Further, it follows from equicontinuity that the metric $\hat{d}$ is continuous with respect to $d_0$. Thus, since $X$ is compact, we get that the metrics $\hat{d}$ and $d_0$ are homeomorphic. This proves \eqref{item:blowupd}.

It is easy to see \eqref{item:blowupqs}, that each blowup  $\hat{d}$ is quasisymmetric to $d_0$, since the inequality in quasisymmetry \ref{def:quasisym} is stable under uniform convergence of the metrics. 

For \eqref{item:blowupmu}, let $\tau_n=\mu(X_{w(n)})$ and let $\mu_n$ be defined as in Definition \ref{def:blowup}. Thus, $\mu_n$ is a sequence of probability measures on $X$ and possesses a subsequence converging to a blowup measure $\hat{\mu}$ in the weak* sense. We will show that $\hat{\mu}$ is doubling. Let $v_1,v_2\in \cC_{X_1}^*$  be words such that $v_2$ is a prefix of $v_1$. Then, we have by using Lemma \ref{lem:balls} that for all $n\in \N$ that
\[
\frac{\mu_n(X_{v_1})}{\mu_n(X_{v_2})}=\frac{\mu(X_{w(n)v_1})}{\mu(X_{w(n)v_2})}\gtrsim b^{-(|v_1|-|v_2|)}
\]
for some constant $b>0$.  This implies again by Lemma \ref{lem:balls} that $\mu_n$ is uniformly doubling with respect to $d_0$. Any weak limit of uniformly doubling probability measures on $X$ is uniformly doubling with respect to $d_0$. Further, the property of being a doubling measure for a given metric is invariant under quasisymmetric changes of the metric, and thus $\mu$ is also doubling with respect to the metric $\hat{d}$. 

Now assume that $(X,d,\mu)$ is Ahlfors $Q$-regular. Let $v_1\in \cC_{X_1}^*$  be any finite word and let $n\in \N$, we get
\[
\mu_n(X_{v_1})=\frac{\mu(X_{w(n)v_1})}{\mu(X_{w(n)})}\approx \frac{\diam(X_{w(n)v_1})^Q}{\diam(X_{w(n)})^Q} \approx \diam_{d_n}(X_{v_1})^Q.
\]
By doubling, quasisymmetry of $d_n$ with $d_0$ and Lemma \ref{lem:balls}, we can cover any ball $B_{d_n}(x,r)$, with $r\in (0,1)$ by finitely many tiles $X_{v_i}$ whose diameter is comparable to $r$. This yields \eqref{item:blowupAR}, the Ahlfors $Q$-regularity of $(X,\hat{d},\hat{\mu})$, by sending $d\to \infty$.

Finally, assume that $(X,d,\mu)$ is Ahlfors $Q$-regular and $Q$-Loewner. It is quasisymmetric to the Ahlfors $Q$-regular space $(X,\hat{d},\hat{\mu})$. Then, $(X,\hat{d},\hat{\mu})$ is also $Q$-Loewner by Proposition \ref{prop:quasiinv}. This proves \eqref{item:blowuploewner}.

\end{proof}

\section{Symmetrizing Loewner metrics}
We now fix the ``pillow space'' $(X,d_0)$ from Definition \ref{def:pillowspace}. Let $G=\mathbb{Z}_2^\mathbb{N}$, acting on $X$ by the mappings
$$ \iota_w\colon X\rightarrow X \text{ for each }w\in G$$
defined in subsection \ref{subsec:pillowspace}. We equip $G$ with a metric and measure in a standard way, as follows. If $g_1 = (\epsilon_n)$ and $g_2=(\delta_n) \in G$, then we set $D(g_1, g_2)= 3^{-n}$, where $n$ is the minimal integer for which $\epsilon_n\neq \delta_n$. It is standard that $(G,D)$ is a compact metric space. Finally, we define a probability measure $\mathbb{P}$ on $G$ as the product of copies of the uniform measure on the individual $\mathbb{Z}_2$ factors.

Metrics for which all elements of $G$ are isometries play a special role in the arguments below.
\begin{definition}
If $d_S$ is a metric on $X$, then we will say that $d_S$ is \textit{symmetric} if $\iota_w$ is an isometry of $d_S$ for each $w\in G$. 
\end{definition}
The original metric $d_0$ on $X$ is of course an example of a symmetric metric, by Lemma \ref{lem:relationships}.

The goal of this section is to prove Proposition \ref{prop:SL}, which states that if $(X,d_0)$ is quasisymmetric to a $Q$-Loewner metric, then it is quasisymmetric to a symmetric $Q$-Loewner metric. First, we show that symmetric metrics are preserved under blowups:

\begin{lemma}\label{lem:symblowup}
Each blowup of a symmetric metric on $X$ is also a symmetric metric.
\end{lemma}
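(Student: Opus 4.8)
The plan is to reduce everything to the single commutation identity recorded in Lemma \ref{lem:relationships}: for any finite word $u\in\cC_{X_1}^{*}$ of length $|u|$ and any $v\in G=\ZZ_2^\N$ one has
$$F_{u}\circ\iota_{v}=\iota_{v|_{|u|}}\circ F_{u},$$
where $v|_{k}\in G$ is $v$ with $k$ zeros prepended. The crucial point is that $v|_{k}$ is again an element of $G$, so if $d$ is symmetric then $\iota_{v|_{k}}$ is a $d$-isometry just as $\iota_{v}$ is. The whole argument is then a one-line passage to the limit in the definition of a blowup, so I do not expect any real obstacle; the only thing to watch is this bookkeeping about prepending zeros.

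Concretely, let $d$ be a symmetric metric on $X$ and let $\hat d$ be a blowup of $d$ at $x=F(w)$, so that along a sequence $k_n\to\infty$ the metrics
$$d_n(p,q)=\lambda_n^{-1}d\bigl(F_{w(k_n)}(p),F_{w(k_n)}(q)\bigr)$$
converge uniformly on $X\times X$ to $\hat d$. Fix $v\in G$ and points $p,q\in X$. Since $w(k_n)$ is a finite word of length $k_n$, the commutation identity gives $F_{w(k_n)}\circ\iota_v=\iota_{v|_{k_n}}\circ F_{w(k_n)}$, hence
\begin{align*}
d_n(\iota_v(p),\iota_v(q))
&=\lambda_n^{-1}d\bigl(F_{w(k_n)}(\iota_v(p)),\,F_{w(k_n)}(\iota_v(q))\bigr)\\
&=\lambda_n^{-1}d\bigl(\iota_{v|_{k_n}}(F_{w(k_n)}(p)),\,\iota_{v|_{k_n}}(F_{w(k_n)}(q))\bigr)\\
&=\lambda_n^{-1}d\bigl(F_{w(k_n)}(p),\,F_{w(k_n)}(q)\bigr)=d_n(p,q),
\end{align*}
where the third equality uses that $v|_{k_n}\in G$ and $d$ is symmetric, so $\iota_{v|_{k_n}}$ preserves $d$.

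Finally I would let $n\to\infty$ along the defining subsequence to conclude $\hat d(\iota_v(p),\iota_v(q))=\hat d(p,q)$ for all $p,q\in X$. Since $\iota_v$ is a bijection of $X$ (it is an isometry of $(X,d_0)$ by Lemma \ref{lem:relationships}), this says precisely that $\iota_v$ is a $\hat d$-isometry; as $v\in G$ was arbitrary, $\hat d$ is symmetric. That completes the proof. As noted, there is no genuinely hard step here — the content is entirely in the relation $F_{u}\circ\iota_v=\iota_{v|_{|u|}}\circ F_u$, which has already been established.
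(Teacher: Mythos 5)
Your proof is correct and follows essentially the same approach as the paper: both invoke the commutation identity $F_{w(n)}\circ\iota_{v}=\iota_{v|_n}\circ F_{w(n)}$ from Lemma \ref{lem:relationships} to show each $d_n$ is symmetric, then pass to the uniform limit.
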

\begin{proof} Assume that $d_S$ is some symmetric metric on $X$. 
If $w\in \cC_X^\N$ is any infinite word, then 
\[
d_{S,n}(p,q)=\lambda_n^{-1}d_S(F_{w(n)}(p),F_{w(n)}(q))
\]
is a symmetric metric. Indeed, by Lemma \ref{lem:relationships} we have $F_{w(n)}(\iota_{w'}(p))=\iota_{w'|_n}(F_{w(n)}(p))$.  Thus, since $d_S$ is symmetric, we obtain the desired symmetry for $d_{S,n}$:
\[
d_{S,n}(\iota_{w'}p,\iota_{w'}q)=\lambda_n^{-1}d_S(F_{w(n)}(\iota_{w'}p),F_{w(n)}(\iota_{w'}q))=\lambda_n^{-1}d_S(F_{w(n)}(p),F_{w(n)}(q))=d_{S,n}(p,q).
\]
The fact that every blowup of $d_S$ is symmetric now follows immediately by continuity.
\end{proof}

We now work towards the proof of Proposition \ref{prop:SL}. We will find the symmetric Loewner metric $d_{SL}$ by a two-step process. First, we symmetrize the Loewner metric and measure by averaging over $G$. Ideally, this would suffice. However, we also need an additional blow-up step since the symmetrized metric may, as far as we know, fail to be Ahlfors regular.

Let $d_L$ be a $Q$-Loewner metric on $X$ that is quasisymmetric (by the identity map) to $d$. Let $\mu_L$ be the $Q$-dimensional Hausdorff measure with respect to the metric $d_L$, which is Ahlfors $Q$-regular. Note that $Q$ must be the Hausdorff dimension of $(X,d_L)$, and so $Q\geq 2$ as $X$ contains a topological square.

Define a new metric and measure on $X$ by
$$ d_{S}(x,y) = \int_G d_L(gx, gy) d\mathbb{P}(g),$$
$$ \mu_{S}(A) = \int_G \mu_L(g^{-1}(A)) d\mathbb{P}(g).$$

(Recall of course that $g=g^{-1}$ for all $g\in G$, but this seems the natural way to write these expressions.)

\begin{lemma}\label{lem:symmetrize}
We have the following properties of $(X, d_{S}, \mu_{S})$.
\begin{enumerate}[(i)]
\item \label{ds:sym} $d_S$ is symmetric.
\item\label{ds:qs} $d_{S}$ is quasisymmetric (by the identity map) to $d_L$.
\item\label{ds:doubling} $\mu_{S}$ is finite, non-zero, and doubling. 
\item\label{ds:lowerbound} $\mu_{S}(B_{d_S}(x,r)) \gtrsim r^Q$ for all $x\in X$, $r\in (0,\diam_S(X)]$.
\item\label{ds:hausdorff} $\mu_S \gtrsim \mathcal{H}^Q_{d_S}$.
\item\label{ds:positive} $\mathcal{H}^Q_{d_{S}}(B_{d_S}(x,r)) \gtrsim r^Q$ for all $x\in X, r\in (0,\diam_S(X)].$
\end{enumerate}

\end{lemma}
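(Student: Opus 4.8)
The plan is to take the six assertions in order, using two structural features throughout: both the quasisymmetry inequality and the doubling inequality are \emph{linear}, hence survive the averaging over $G$ defining $d_S$ and $\mu_S$; and a tile $X_w$ of $(X,d_0)$ is ``round'' both in $d_S$ and in $d_L$, so Jensen's inequality can be played off against the Ahlfors regularity of $\mu_L$. Write $\rho_g(x,y)=d_L(gx,gy)$, so $d_S=\int_G\rho_g\,d\mathbb{P}$ and $g\colon(X,d_L)\to(X,\rho_g)$ is an isometry. For (i): $d_S$ is symmetric in its arguments and satisfies the triangle inequality as an average of metrics, is finite since $d_L\le\diam_{d_L}(X)<\infty$, and is positive off the diagonal since each $g$ is a bijection; $G$-invariance follows by substituting $g\mapsto gh$ in the defining integral, using that $\mathbb{P}$ is Haar measure on the compact group $G$. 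For (ii): since each $\iota_w$ is a $d_0$-isometry and $(X,d_0)\sim_{\QS,\eta,I}(X,d_L)$, the map $\iota_w$, viewed as a self-map of $(X,d_L)$, factors as $I\circ\iota_w\circ I^{-1}$ through $(X,d_0)$ and hence is an $\eta'$-quasisymmetry with $\eta'$ depending only on $\eta$; equivalently $\mathrm{id}\colon(X,d_L)\to(X,\rho_g)$ is $\eta'$-quasisymmetric uniformly in $g$, and integrating $\rho_g(x,y)\le\eta'\bigl(d_L(x,y)/d_L(x,z)\bigr)\rho_g(x,z)$ over $g$ gives the same inequality for $d_S$. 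For (iii): $\mu_S(X)=\mu_L(X)\in(0,\infty)$; and $\mu_L$ is doubling for $d_0$ (quasisymmetric invariance of doubling measures on the connected, hence uniformly perfect, space $X$), each $(\iota_w)_*\mu_L$ is $d_0$-doubling with the \emph{same} constant since $\iota_w$ is a $d_0$-isometry, so the average $\mu_S$ is $d_0$-doubling, and by (ii) it is $d_S$-doubling.

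The core is (iv). Fix a tile $X_w$ with $|w|=n$. As $\iota_{w'}$ is a cubical $d_0$-isometry, $\iota_{w'}(X_w)$ is again an $n$-level tile, of $d_0$-diameter $\asymp 3^{-n}$; by Lemma \ref{lem:balls} and the quasisymmetry $(X,d_0)\sim_{\QS,I}(X,d_L)$, every tile is round in $d_L$, so Ahlfors $Q$-regularity of $\mu_L$ gives $\mu_L(X_v)\asymp\diam_{d_L}(X_v)^Q$ uniformly over tiles. Hence
\[
\mu_S(X_w)=\int_G\mu_L(\iota_{w'}X_w)\,d\mathbb{P}(w')\asymp\int_G\diam_{d_L}(\iota_{w'}X_w)^Q\,d\mathbb{P}(w')\ \ge\ \Bigl(\int_G\diam_{d_L}(\iota_{w'}X_w)\,d\mathbb{P}(w')\Bigr)^{\!Q}
\]
by Jensen ($t\mapsto t^Q$ is convex since $Q\ge1$). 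Writing $\diam_{d_L}(\iota_{w'}X_w)=\sup_{a,b\in X_w}\rho_{w'}(a,b)$ and using $\int\sup\ge\sup\int$, the inner integral is $\ge\sup_{a,b\in X_w}\int_G\rho_{w'}(a,b)\,d\mathbb{P}(w')=\diam_{d_S}(X_w)$, so $\mu_S(X_w)\gtrsim\diam_{d_S}(X_w)^Q$. To pass to balls, choose $w$ with $F(w)=x$; the nested tiles $X_{w(n)}\ni x$ have $d_S$-diameters decreasing to $0$ with consecutive ratios bounded below (a standard quasisymmetry estimate for nested balls on $(X,d_0)$, again via Lemma \ref{lem:balls}), so for every $r\le\diam_{d_S}(X)$ some $X_{w(n)}\subseteq B_{d_S}(x,r)$ has $\diam_{d_S}(X_{w(n)})\gtrsim r$, whence $\mu_S(B_{d_S}(x,r))\ge\mu_S(X_{w(n)})\gtrsim r^Q$.

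Statement (v) then follows from (iv) by a routine covering argument: approximate $A$ from outside by an open $U$, cover $A$ by small balls $B_{d_S}(y,t)$ with $y\in A$ and $B_{d_S}(y,5t)\subseteq U$, extract a disjoint subfamily whose $5t$-dilates still cover $A$ (the $5r$-covering lemma), and bound $\sum(10t_i)^Q\lesssim\sum\mu_S(B_{d_S}(y_i,t_i))\le\mu_S(U)$ using (iv); letting the radii and then the error go to $0$ gives $\mathcal{H}^Q_{d_S}(A)\lesssim\mu_S(A)$. The main obstacle is (vi): a \emph{lower} bound on a Hausdorff measure follows neither from (v) nor from doubling, and because $d_S$ is not known to be Ahlfors regular it requires a genuine mass-distribution (Frostman) input. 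I would reduce (vi), exactly as in (iv), to the scale-invariant claim $\mathcal{H}^Q_{d_S}(X_w)\gtrsim\diam_{d_S}(X_w)^Q$ uniformly over tiles; here each $(X_w,d_S)$ is isometric to $(X,\rho_w)$ for a \emph{symmetric} metric $\rho_w$ quasisymmetric (by the identity) to $d_0$, hence to the $Q$-regular $Q$-Loewner metric $d_L$. The two ingredients I would aim for are (a) $\mathcal{H}^Q_{\rho_w}(X)>0$ — a metric quasisymmetric to a $Q$-regular $Q$-Loewner space carries curve families of positive $Q$-modulus and so cannot have vanishing $\mathcal{H}^Q$ — and (b) uniformity of this positivity over the precompact family $\{\rho_w\}$ in the sense of Lemma \ref{lem:blowup}; making (b) precise, i.e.\ showing a lower $\mathcal{H}^Q$-bound is not destroyed in the relevant limits, is the delicate point. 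Equivalently, one may try to exhibit, for each ball $B_{d_S}(x,r)$, a measure supported on a tile $X_w\subseteq B_{d_S}(x,r)$ of $d_S$-diameter $\asymp r$ that is upper $Q$-regular for $d_S$ and of total mass $\gtrsim r^Q$, and then apply the mass-distribution principle; I expect this construction to be where the real work of the lemma sits.
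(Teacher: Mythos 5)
Items (i)--(v) are essentially correct and run parallel to the paper's argument. For (ii) and (iii) you re-derive inline the observation that the paper isolates as Lemma~\ref{lem:convexcone}: the quasisymmetry inequality with a fixed $\eta$ and the doubling inequality with a fixed constant are linear constraints, hence survive averaging over $G$; your detour through $d_0$-doubling and quasisymmetric invariance is a slight variant but sound. For (iv) you estimate $\mu_S$ on tiles $X_w$ and then pass to balls via nested tiles at $x$, whereas the paper works directly with an arbitrary ball $B=B_{d_S}(x,r)$: since $g$ is a $d_S$-isometry, $g^{-1}(B)=B_{d_S}(g^{-1}x,r)$ is sandwiched by $d_L$-balls, Ahlfors $Q$-regularity of $\mu_L$ gives $\mu_L(g^{-1}B)\gtrsim d_L(g^{-1}x,g^{-1}y)^Q$ for any $y\in B$, and then Jensen applied to $t\mapsto t^Q$ finishes. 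Both routes hinge on exactly the same Jensen step; the paper's is a bit shorter because it avoids the tile-to-ball bookkeeping. Your (v) is the standard $5r$-covering argument that the paper cites to \cite{AT}.

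The genuine gap is (vi), and you correctly flag it yourself. You are right that a lower bound on $\mathcal{H}^Q_{d_S}$ follows neither from (v) nor from doubling, and that the Loewner hypothesis on $d_L$ must be used. Your proposed reduction -- show $\mathcal{H}^Q_{\rho_w}(X)>0$ for each symmetrized metric and then establish uniformity over the family -- is the right circle of ideas, but the uniformity step is precisely what you leave open, and a compactness argument of the kind you sketch would take real work to make rigorous (lower bounds on Hausdorff measure need not pass to limits). The paper instead cites a ready-made result: a quasisymmetric image of an Ahlfors $Q$-regular $Q$-Loewner space satisfies $\mathcal{H}^Q(B(y,r))\gtrsim r^Q$, which is \cite[Remark~15.18]{He}. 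The mechanism behind that remark is modulus, not compactness: the Loewner condition provides a lower bound on $Q$-modulus of curve families, $Q$-modulus is quasi-preserved under quasisymmetries, and a positive modulus bound in $(X,d_S)$ forces the claimed lower bound on $\mathcal{H}^Q_{d_S}(B_{d_S}(x,r))$. Note that here one applies this with the \emph{single} quasisymmetry $\mathrm{id}\colon (X,d_L)\to (X,d_S)$ coming from (ii); there is no need to vary $w$ or take limits at all. Without this input, your proof of the lemma is incomplete at item (vi).
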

\begin{proof}
Item \eqref{ds:sym} is immediate from the definition of $d_S$.

Observe that for each $g\in G$, 
$$ (x,y)\mapsto d_L(gx,gy)$$
is a metric that is quasisymmetric to $d_0$ with a fixed distortion $\eta$. In other words, each of these metrics is an element of $C_{\eta,d}$. Item \eqref{ds:qs} then follows immediately from Lemma \ref{lem:convexcone}. 

For \eqref{ds:doubling}, the finiteness and non-triviality of $\mu_S$ follow immediately from the Ahlfors regularity of $\mu_L$. For the doubling, note first that, because $d_S$ and $d_L$ are quasisymmetric, there is a constant $C$ such that for each $p\in X, r>0$, there is an $s>0$ so that
$$ B_{d_L}(p, s) \subseteq B_{d_S}(p,r) \subseteq B_{d_S}(p, 2r) \subseteq B_{d_L}(p,Cs).$$ 
Since $\mu_L$ is doubling on $(X,d_L)$,  we have
$$ \mu_L(B_{d_S}(p,2r)) \leq \mu_L(B_{d_L}(p,Cs)) \lesssim \mu_L( B_{d_L}(p, s)) \leq \mu_L(B_{d_S}(p,r)) \text{ for all } p\in X, r>0.$$
Hence, using the fact that each $g\in G$ is an isometry of $(X,d_S)$, we get
\begin{align*}
\mu_S(B_{d_S}(x,2r)) &= \int_G \mu_L(g^{-1}(B_{d_S}(x,2r))\, d\mathbb{P}(g)\\
&= \int_G \mu_L(B_{d_S}(g^{-1}x,2r))\, d\mathbb{P}(g)\\
&\lesssim \int_G \mu_L(B_{d_S}(g^{-1}x,r))\, d\mathbb{P}(g)\\
&= \mu_S(B_{d_S}(x,r)),
\end{align*}
proving \eqref{ds:doubling}

For \eqref{ds:lowerbound}, fix a ball $B=B_{d_S}(x,r)$ (with $r\leq \diam_{d_S}(X)$) and an arbitrary point $y\in B$. Recall that $d_L$ and $d_S$ are quasisymmetric (by \eqref{ds:qs}) and hence each $g\in G$ is a quasisymmetry of $d_L$ with fixed distortion $\eta$. Thus, in the metric $d_L$, $g^{-1}(B)$ is sandwiched between two balls of comparable $d_L$-radius centered at $g^{-1}(x)$, with comparability constant independent of $g$. Since $\mu_L$ is Ahlfors $Q$-regular, we therefore have
$$ \mu_L(g^{-1}(B)) \gtrsim d_L(g^{-1}x, g^{-1}y)^Q \text{ for all } g\in G,$$
with implied constant independent of $g$. Hence, using Jensen's inequality,
\begin{align*}
\mu_S(B) &= \int_G \mu_L(g^{-1}(B))\,d\mathbb{P}(g)\\
&\gtrsim \int_G d_L(g^{-1}x, g^{-1}y)^Q\,d\mathbb{P}(g)\\
&\geq \left(\int_G d_L(g^{-1}x, g^{-1}y)\,d\mathbb{P}(g)\right)^Q\\
&= d_S(x,y)^Q.
\end{align*}
As $y\in B$ was arbitrary and $X$ is connected, we may choose $y$ so that $d_S(x,y)\geq r/2$ and obtain \eqref{ds:lowerbound}.

Item \eqref{ds:hausdorff} follows from \eqref{ds:lowerbound} by a standard covering argument (see,  e.g., \cite[Section 2]{AT}). Lastly, item \eqref{ds:positive} follows from the Loewner property of $d_L$, the quasisymmetry property \eqref{ds:qs}, and \cite[Remark 15.18]{He}.
\end{proof}

If we could prove directly that $\mu_{S}$ were Ahlfors $Q$-regular, then Proposition \ref{prop:SL} would immediately follow from standard results, but unfortunately this does not seem clear. However, we have the following lemma. 

\begin{lemma}\label{lem:density}
The measure $\mu_S$ satisfies
$$ 0 < \liminf_{r\rightarrow 0} \frac{\mu_S(B_{d_S}(x,r))}{r^Q} \leq \limsup_{r\rightarrow 0} \frac{\mu_S(B_{d_S}(x,r))}{r^Q}  < \infty$$
for $\mathcal{H}^Q_{d_S}$-a.e. $x\in X$.
\end{lemma}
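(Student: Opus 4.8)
The two-sided density bound is really a bound on the ratio between $\mathcal{H}^Q_{d_S}$ and $\mu_S$, so the plan is to compare these two measures using the standard density theorems for Radon measures on doubling spaces. First I would record the relevant general fact: if $\nu$ is a Radon measure on a doubling metric space and $\Theta^{*}(\nu,x) := \limsup_{r\to 0}\nu(B(x,r))/r^Q$, then $\Theta^{*}(\nu,\cdot)$ controls the relationship with $\mathcal{H}^Q$ pointwise almost everywhere. Concretely, at $\mathcal{H}^Q$-a.e.\ $x$ where $\Theta^{*}(\nu,x)<\infty$ one has a lower bound $\nu \gtrsim \Theta^{*}(\nu,x)\,\mathcal{H}^Q$ locally, and conversely at $\nu$-a.e.\ $x$ the upper density $\Theta^{*}(\nu,x)$ is bounded below by a positive constant when $\nu \lesssim \mathcal{H}^Q$; these are the standard comparison results (e.g.\ \cite[Ch.~8]{He}, \cite[Section 2]{AT}).

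Next I would extract the two inequalities from Lemma \ref{lem:symmetrize}. From \eqref{ds:doubling}, $\mu_S$ is a finite doubling measure on $(X,d_S)$, so the density quotients are defined and the differentiation theorems apply. The lower bound in \eqref{ds:lowerbound}, $\mu_S(B_{d_S}(x,r))\gtrsim r^Q$ for all $x$ and all $r\le \diam_S(X)$, immediately gives $\liminf_{r\to 0}\mu_S(B_{d_S}(x,r))/r^Q \gtrsim 1 > 0$ at \emph{every} point $x$, which handles the left-hand inequality unconditionally. For the right-hand (finite upper density) inequality, I would argue by contradiction using the absolute continuity / Hausdorff measure comparison: the set
\[
E = \Bigl\{ x\in X : \limsup_{r\to 0}\frac{\mu_S(B_{d_S}(x,r))}{r^Q} = \infty \Bigr\}
\]
has $\mathcal{H}^Q_{d_S}(E) = 0$ by the standard fact that, for a Radon measure on a doubling space, infinite upper density can only occur on a set of $\mathcal{H}^Q$-measure zero (this is exactly the statement that $\mu_S \ll \mathcal{H}^Q$ fails only where the density blows up, combined with $\mu_S \gtrsim \mathcal{H}^Q$ from \eqref{ds:hausdorff}, which shows the two measures have the same null sets in the relevant direction). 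Hence the conclusion of the lemma holds at $\mathcal{H}^Q_{d_S}$-a.e.\ point.

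A cleaner way to organize the second half, which I would probably prefer: by \eqref{ds:hausdorff} we have $\mathcal{H}^Q_{d_S} \lesssim \mu_S$, and together with \eqref{ds:positive} and the Ahlfors-type upper bound one should check that $\mu_S \lesssim \mathcal{H}^Q_{d_S}$ as well — but since that last comparison is precisely what is \emph{not} clear (it is the missing Ahlfors regularity of $\mu_S$), one cannot take that route. So instead one must genuinely use the differentiation machinery: at $\mathcal{H}^Q_{d_S}$-a.e.\ $x$, the upper density of $\mu_S$ with respect to $\mathcal{H}^Q_{d_S}$ is finite (this is the Radon–Nikodym/density theorem applied to the pair $\mathcal{H}^Q_{d_S} \lesssim \mu_S$, which gives that $\mathcal{H}^Q_{d_S}$ has a finite density with respect to $\mu_S$ a.e., and inverting), and since $\mathcal{H}^Q_{d_S}(B_{d_S}(x,r)) \lesssim r^Q$ always holds (because $\mathcal{H}^Q_{d_S}$ is, say, dominated by the Ahlfors-regular-type upper bound — this needs a short check from the fact that $d_S \le d_L$ pointwise after the averaging, hence $\mathcal{H}^Q_{d_S} \le \mathcal{H}^Q_{d_L} \lesssim r^Q$), we conclude $\limsup_{r\to 0}\mu_S(B_{d_S}(x,r))/r^Q < \infty$ a.e.

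\textbf{Main obstacle.} The delicate point is the upper bound: we do \emph{not} know $\mu_S$ is Ahlfors regular (this is exactly why the extra blow-up step in Proposition \ref{prop:SL} is needed later), so the finiteness of the upper density must come from an a.e.\ differentiation argument relating $\mu_S$ to $\mathcal{H}^Q_{d_S}$ rather than from a uniform estimate. Getting the direction of the comparison right — we have $\mathcal{H}^Q_{d_S} \lesssim \mu_S$ globally and want finite $\mu_S$-density with respect to $r^Q$, which requires also knowing $\mathcal{H}^Q_{d_S}(B_{d_S}(x,r)) \lesssim r^Q$ — is where care is needed; the latter bound itself should follow because $d_S$ is built by averaging metrics all $\lesssim d_L$ (or at least comparable to $d_L$ on the relevant scales via the quasisymmetry), so $\mathcal{H}^Q_{d_S}$ inherits the upper Ahlfors bound of $\mathcal{H}^Q_{d_L}$, even though the matching lower bound for $\mu_S$ is unavailable.
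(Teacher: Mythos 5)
Your lower-bound argument — invoking Lemma~\ref{lem:symmetrize}\eqref{ds:lowerbound} to bound the $\liminf$ away from zero uniformly in $x$ — is exactly what the paper does, and that part is fine. For the upper bound, your first (contradiction) attempt reaches the right conclusion, namely $\mathcal{H}^Q_{d_S}(E)=0$ for $E$ the set of infinite upper density, but the parenthetical justification does not parse: from Lemma~\ref{lem:symmetrize}\eqref{ds:hausdorff} you have $\mathcal{H}^Q_{d_S}\lesssim\mu_S$, i.e.\ $\mathcal{H}^Q_{d_S}\ll\mu_S$, which says nothing about $\mu_S\ll\mathcal{H}^Q_{d_S}$, so the phrase ``$\mu_S\ll\mathcal{H}^Q$ fails only where the density blows up'' is not an input you actually possess. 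What does give $\mathcal{H}^Q_{d_S}(E)=0$ directly is the density comparison theorem for Radon measures (e.g.\ \cite[Theorem~6.9(2)]{Mat}, valid in doubling metric spaces): a \emph{finite} Radon measure has infinite upper $Q$-density only on an $\mathcal{H}^Q$-null set, and $\mu_S$ is finite by Lemma~\ref{lem:symmetrize}\eqref{ds:doubling}. With that citation your contradiction argument closes and is in fact a slightly more direct route than the paper's (which instead establishes $\mathcal{H}^Q_{d_S}(X)<\infty$, invokes \cite[Theorem~6.2]{Mat} to get $\limsup_{r\to0}\mathcal{H}^Q_{d_S}(B_{d_S}(x,r))/r^Q\le c_Q$ a.e., and then compares $\mathcal{H}^Q_{d_S}$ to $\mu_S$ via Radon--Nikodym and Lebesgue differentiation, a comparison you also correctly anticipated).

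Your ``cleaner way,'' by contrast, has a genuine gap. The claim $d_S\le d_L$ pointwise is false: $d_S(x,y)=\int_G d_L(gx,gy)\,d\mathbb{P}(g)$ is an \emph{average}, not an infimum, and the group elements $g\in G$ are only quasisymmetries of $d_L$, not contractions, so the integrand $d_L(gx,gy)$ can well exceed $d_L(x,y)$. Quasisymmetry controls ratios of distances, not individual distances, so no pointwise or even bi-Lipschitz comparison of $d_S$ with $d_L$ is available, and the uniform bound $\mathcal{H}^Q_{d_S}(B_{d_S}(x,r))\lesssim r^Q$ does not follow. Indeed, if such a uniform bound held, then together with Lemma~\ref{lem:symmetrize}\eqref{ds:positive} it would give Ahlfors $Q$-regularity of $(X,d_S)$ outright, making the blowup step in the proof of Proposition~\ref{prop:SL} unnecessary — a sign that you are claiming more than is available. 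The paper sidesteps this precisely by proving only the a.e.\ upper-density bound on $\mathcal{H}^Q_{d_S}$, not a uniform one.
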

\begin{proof}
The first inequality in the lemma holds everywhere by Lemma \ref{lem:symmetrize}\eqref{ds:lowerbound}, and the second is trivial, so we focus on the third.

Lemma \ref{lem:symmetrize} (parts \eqref{ds:doubling} and \eqref{ds:hausdorff}) also implies that $\mathcal{H}^Q_{d_S}(X,d_S)<\infty$. Therefore, there exists a constant $c_Q>0$ for which 
\begin{equation}\label{eq:hausdorfflimsup}
 \limsup_{r\rightarrow 0} \frac{\mathcal{H}^Q_{d_S}(B_{d_S}(x,r))}{r^Q} \leq c_Q
\end{equation}
for $\mathcal{H}^Q_{d_S}$-a.e. $x\in X$. (See \cite[Theorem 6.2]{Mat}, which is stated only in Euclidean space but applies also in this generality.)

By Lemma \ref{lem:symmetrize}\eqref{ds:hausdorff}, we have that $\mathcal{H}^Q_{d_S} \lesssim \mu$, and in particular 
$\mathcal{H}^Q_{d_S} << \mu$. By the Radon-Nikodym and Lebesgue differentiation theorems, we also have
\begin{equation}\label{eq:hausdorffmu}
\lim_{r\rightarrow0} \frac{\mathcal{H}^Q_{d_S}(B(x,r)) }{\mu_S(B(x,r))} \in (0,\infty)
\end{equation}
for $\mu$-a.e. and hence $\mathcal{H}^Q_{d_S}$-a.e. $x\in X$.

Combining \eqref{eq:hausdorfflimsup} and \eqref{eq:hausdorffmu} yields
\[
\limsup_{r\rightarrow 0} \frac{\mu_S(B(x,r))}{r^Q} <\infty
\]
for $\mathcal{H}^Q_{d_S}$-a.e. $x\in X$.
\end{proof}

\begin{proof}[Proof of Proposition \ref{prop:SL}]
We will show that there is a positive $\mu_S$-measure set $A\subseteq (X,d_S, \mu_S)$ such that any blowup of $d_S$ and $\mu_S$ at a point of $A$ yields an Ahlfors $Q$-regular metric measure space $(X,\hat{d},\hat{\mu})$. Such a blowup is quasisymmetric to $(X,d_S)$, and hence $(X,d_0)$, by Lemma \ref{lem:blowup}. By Lemma \ref{lem:symblowup}, it is symmetric. Moreover, since such a blow-up is also quasisymmetric to the Ahlfors $Q$-regular, $Q$-Loewner space $(X,d_L)$, we will have by Proposition \ref{prop:quasiinv} that $(X,\hat{d},\hat{\mu})$ is $Q$-Loewner. 

Thus, we are left to show the existence of the positive measure set $A$. First, let 
\[
\tilde{A}=\{x \in X: 0<\liminf_{r\rightarrow 0} \frac{\mu_S(B(x,r))}{r^Q} \leq \limsup_{r\rightarrow 0} \frac{\mu_S(B(x,r))}{r^Q}<\infty \}.
\]
This set has full  $\HH_{d_S}^Q$-measure by Lemma \ref{lem:density}. In addition, by Lemma \ref{lem:symmetrize}\eqref{ds:hausdorff} and \eqref{ds:positive}, we have $\HH^Q_{d_S}(\tilde{A})>0$ and thus $\mu_S(\tilde{A})>0$. Thus, there is some $\delta>0$ so that the set
\[
A_\delta = \{x\in X: \frac{\mu_S(B(x,r))}{r^Q}\in (\delta, \delta^{-1}) \quad\forall r\in (0,\delta)\}
\]
has positive $\mu_S$-measure. Suppose now that $w$ is a finite word for which $3^{-|w|}\leq \delta$, and $w'$ is any finite word with $|w'|\geq |w|$ and $w'(n)=w$. If $X_{w'}\cap A_\delta \neq \emptyset$, then by the fact that $d_S$ is quasisymmetric to $d_0$, that $\mu_S$ is doubling, and Lemma \ref{lem:balls}, we get
\begin{equation}\label{eq:tilecomp}
\frac{\mu_S(X_w)}{\mu_S(X_{w'})}\approx \frac{\diam_{d_S}(X_{w})^Q}{\diam_{d_S}(X_{w'})^Q}. 
\end{equation}
Let $A\subset A_\delta$ be the set of points $x\in A$ so that for every $\epsilon>0$, there exists a radius $r_\epsilon$ so that for $r\in (0,r_\epsilon)$, we have that $B(x,r)\subset N_{\epsilon r}(A_\delta)$. (That is, the set $A_\delta$ is $\epsilon r$ -dense in $B(x,r)$). By Lebesgue differentiation and the doubling property, the set $A$ has full $\mu_S$-measure in $A_\delta$. 

Let $x\in A$, and let $w\in \cC_X^\N$ be any word for which $F(w)=x$. By Lemma \ref{lem:balls}, for each $x\in A$ and $m\in \N$,  there is an $n_0\in\N$ such that if $n\geq n_0$ and  $w'\in \cC_X^{n+m}$ has $w'(n)=w(n)$, then $X_{w'}\cap A_\delta\neq \emptyset$. Consequently, by \eqref{eq:tilecomp},
\[
\frac{\mu_S(X_{w(n)})}{\mu_S(X_{w'})}\approx \frac{\diam_{d_S}(X_{w(n)})^Q}{\diam_{{d_S}}(X_{w'})^Q},
\]
where the constants are independent of $n,w',m$.  

Now suppose $\hat{\mu},\hat{d}$ are blowups of $\mu_S, d_S$ at $x=F(w)\in A$. Suppose $v\in \cC_{X_1}^*$. Then, taking limits along an appropriate subsequence, we have
\begin{align*}
\frac{\hat{\mu}(X)}{\hat{\mu}(X_v)} &= \lim_{n\rightarrow\infty}\left(\frac{(\sigma^n)_* \mu_S|_{X_{w(n)}}(X)}{(\sigma^n)_* \mu_S|_{X_{w(n)}}(X_v)}\right)\\
&= \lim_{n\rightarrow\infty}\left(\frac{\mu_S(X_{w(n)})}{\mu_S(X_{w(n)v})}\right)\\
&\approx \lim_{n\rightarrow\infty}\left(\frac{\diam_{d_S}(X_{w(n)})^Q}{\diam_{d_S}(X_{w(n)v})^Q}\right)\\
&= \lim_{n\rightarrow\infty}\left(\frac{(\lambda_n^{-1}\diam_{d_S}(F_{w(n)}(X)))^Q}{(\lambda_n^{-1}\diam_{d_S}(F_{w(n)}(X_v)))^Q}\right)\\
&= \frac{\diam_{\hat{d}}(X)^Q}{\diam_{\hat{d}}(X_v)^Q}
\end{align*}
Therefore, $\hat{\mu}(X_v)\approx \diam_{\hat{d}}(X_{v})^Q$ for every $v\in\cC_{X_1}^*$. By Lemma \ref{lem:balls}, and the fact that $\hat{d}$ is quasisymmetric to $d_0$ by Lemma \ref{lem:symblowup}, we get that $\hat{\mu}(B_{\hat{d}}(z,r))\approx r^Q$ for all $r\in (0,\diam_{\hat{d}}(X)]$ and $z\in X$.

\end{proof}

\section{Obtaining a PI structure on the square}

Our goal in this section is to prove Lemma \ref{lem:sheetmetric} and Corollary \ref{cor:sheetPI} below, which say that a symmetric Loewner metric on $X$ induces a PI metric and measure on the unit square.

We first need some additional preliminary facts about sheets. Recall the group $G$ and the mappings $s_w\colon X_0 \rightarrow X$ (for $w\in G$) whose images are the sheets of $X$. From Lemma \ref{lem:relationships}, we get that $\Pi|_{s_w(X_0)}$ is an isometry for each $w\in G$, and that the action of $G$ on $X$ transitively permutes the sheets.

A few other basic facts about sheets are important for us:

\begin{lemma}\label{lem:sheetmove}
Let $d$ be any metric on $X$ quasisymmetric (by the identity) to $d_0$. Let $S$ be a sheet in $X$ and $g\in G$. If $p\in S $ and $x\in g(S)$, then
$$ d(g(p),x) \leq \eta(1) d(p,x),$$
where $\eta$ is the quasisymmetry function of the identity map.
\end{lemma}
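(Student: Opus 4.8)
The plan is to reduce the estimate to the corresponding inequality for the original metric $d_0$, where it is essentially a $1$-Lipschitz statement, and then transfer it using the quasisymmetry of the identity map $(X,d_0)\to(X,d)$.

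First I would unwind the action of $g$ on the sheet $g(S)$. Write $S=s_w(X_0)$ and $g=\iota_{w'}$ for appropriate words $w,w'\in G$. Using the composition rules collected in Lemma \ref{lem:relationships} — in particular $\iota_{w'}\circ s_v=s_{w'+v}$ and $\Pi\circ s_v=\mathrm{id}$, together with $w'+w'=0$ in $G=\mathbb{Z}_2^{\mathbb{N}}$ — one checks that $g(S)=s_{w+w'}(X_0)$ and that for every $x\in g(S)$ one has $g(x)=s_w(\Pi(x))$. Thus, on the sheet $g(S)$, the map $g$ coincides with the canonical ``projection'' $s_w\circ\Pi$ of $X$ onto $S$; in particular $g(x)\in S$ for $x\in g(S)$, and $g$ fixes $S\cap g(S)$ pointwise.

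Next, I would prove the inequality for $d_0$. Since $\Pi$ is $1$-Lipschitz and $s_w$ is an isometric embedding for which $s_w\circ\Pi$ restricts to the identity on $S$ (again Lemma \ref{lem:relationships}), for $p\in S$ and $x\in g(S)$ the previous paragraph gives $d_0(p,g(x))=d_0(s_w\Pi(p),s_w\Pi(x))=d_0(\Pi p,\Pi x)\le d_0(p,x)$. Because $g$ lies in $G$, it is a $d_0$-isometry and $g^2=\mathrm{id}$, so this rewrites as $d_0(g(p),x)\le d_0(p,x)$.

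Finally I would transfer this to $d$. If $x\ne p$, apply the defining inequality of quasisymmetry (Definition \ref{def:quasisym}) to the identity map with the triple $(x,g(p),p)$ in the roles of $(x,y,z)$; since $\eta$ is an increasing homeomorphism, this yields $d(g(p),x)/d(p,x)\le\eta\big(d_0(g(p),x)/d_0(p,x)\big)\le\eta(1)$, which is the claim. If $x=p$, then $x\in S\cap g(S)$, so $g(p)=p$ by the first step and the inequality is trivial. The only point that needs a little care — the ``main obstacle'', though a mild one — is the bookkeeping identifying $g|_{g(S)}$ with $s_w\circ\Pi$; once that is established, the rest uses nothing about $d$ beyond its quasisymmetry function $\eta$.
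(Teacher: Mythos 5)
Your proof is correct and follows essentially the same strategy as the paper's: first establish $d_0(g(p),x)\le d_0(p,x)$ using that $\Pi$ is $1$-Lipschitz and isometric on sheets, and then transfer to $d$ via quasisymmetry applied to the triple $(x,g(p),p)$. The only (cosmetic) difference is that the paper reaches the $d_0$ inequality more directly by observing that $g(p)$ and $x$ lie in the same sheet $g(S)$, so $d_0(g(p),x)=|\Pi(g(p))-\Pi(x)|=|\Pi(p)-\Pi(x)|\le d_0(p,x)$, whereas you detour through the identification $g|_{g(S)}=s_w\circ\Pi$ and the fact that $g$ is an involutive $d_0$-isometry.
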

\begin{proof}
First, we note that the result holds (with multiplicative constant equal to $1$) if $d=d_0$. Indeed, since $g(p)$ and $x$ are in the same sheet, we have
$$ d_0(g(p), x) = |\Pi(p)-\Pi(x)| \leq d_0(p,x), $$
using that $\Pi$ is $1$-Lipschitz and an isometry on each sheet.

For any other metric $d$ as in the assumptions, we then have by quasisymmetry that
$$ d_0(x,g(p))\leq d_0(x,p) \Rightarrow d(x,g(p)) \leq \eta(1) d(x,p).$$
\end{proof}

\begin{lemma}\label{lem:twosheets}
Let $d_{S}$ be a symmetric, quasiconvex metric on $X$ that is quasisymmetric (by the identity) to $d_0$. Let $S$ and $S'$ be two sheets of $X$, with $p\in S$ and $q\in S'$.

Then there is a point $z\in S\cap S'$ such that
$$ d_{S}(p,z) + d_{S}(z,q) \lesssim d_{S}(p,q),$$
where the implied constant depends only on the quasiconvexity constant and the quasisymmetry function of the identity map.
\end{lemma}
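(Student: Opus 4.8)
The plan is to produce the point $z$ by quasiconvexity together with the fact that any curve in $X$ from a point of $S$ to a point of $S'$ must pass through the intersection $S \cap S'$. Here I would first record what $S \cap S'$ looks like: if $S = s_w(X_0)$ and $S' = s_{w'}(X_0)$ with $w \ne w'$, and $n$ is the first index where $w$ and $w'$ differ, then $S$ and $S'$ agree on (the projection of) all the squares of scale $\leq n$ whose word-prefix does not enter the $n$-th central pillow, and $S \cap S'$ is exactly the closed set obtained by removing, at level $n$, the open middle square and iterating. Crucially $\Pi(S \cap S') = \Pi(S) = \Pi(S') = X_0$ minus a union of open triadic squares, and $S \cap S'$ separates $S \setminus S'$ from $S' \setminus S$ inside $X$; any connected set meeting both $S \setminus S'$ and $S' \setminus S$ meets $S \cap S'$. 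This is a purely combinatorial/topological statement about the cubical structure and follows from the inverse-limit description in subsection \ref{subsec:pillowspace}.

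Next I would use quasiconvexity of $d_S$ to pick a curve $\gamma$ from $p$ to $q$ with $\ell_{d_S}(\gamma) \lesssim d_S(p,q)$, where the implied constant is the quasiconvexity constant. If $p \in S \cap S'$ we may take $z = p$ and be done, and similarly if $q \in S \cap S'$; so assume $p \in S \setminus S'$ and $q \in S' \setminus S$. Then $\gamma$ meets $S \cap S'$ by the separation property above; let $z$ be any point of $\gamma \cap (S \cap S')$. Splitting $\gamma$ at $z$ gives $d_S(p,z) \leq \ell_{d_S}(\gamma|_{[p,z]})$ and $d_S(z,q) \leq \ell_{d_S}(\gamma|_{[z,q]})$, so $d_S(p,z) + d_S(z,q) \leq \ell_{d_S}(\gamma) \lesssim d_S(p,q)$, as desired. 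Note that the symmetry hypothesis is what guarantees, via Lemma \ref{lem:relationships} and the transitivity of the $G$-action on sheets, that we may reduce a general pair of sheets to a convenient normal form (e.g.\ one of them is a fixed base sheet), though strictly speaking the argument above never needed to invoke it beyond ensuring $d_S$ is a genuine metric on $X$ for which the sheets and their intersections are the same subsets as for $d_0$; I would use symmetry mainly to streamline the description of $S \cap S'$.

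The main obstacle is the separation claim: that $S \cap S'$ disconnects $S \setminus S'$ from $S' \setminus S$ in $X$. Intuitively $S$ and $S'$ are two ``sheets of a pillow'' that coincide outside a central region and split apart inside it, so to pass from one to the other one must cross the seam $S \cap S'$; but making this rigorous requires arguing with the sequence-space representation \eqref{eq:sequencespace}. Concretely, with $n$ the first disagreement index of $w, w'$, a point $(x_i) \in X$ lies in $S$ iff $x_i = s_{w_i}(\cdot)$-type for all $i$, and the only way a continuous path can move between the ``$\iota_1$-flipped'' and ``un-flipped'' copies of the level-$n$ central square is to exit that square — i.e.\ to reach a point whose $n$-th coordinate lies in the boundary of the central square, and such points belong to both $S$ and $S'$. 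I would package this as: the map $\Pi$ restricted to $S \cup S'$ is two-to-one exactly over the open set $\Pi(X) \setminus \Pi(S \cap S')$ and one-to-one over $\Pi(S \cap S')$, and $S \cup S'$ with the two sheets glued along $S \cap S'$ is a standard ``double'' whose removal of the seam disconnects it; then any curve in $X$ from $S \setminus S'$ to $S' \setminus S$ that is further required to leave $S \cup S'$ still must re-enter through $S \cap S'$ (since $S \cap S'$ together with the rest of $X$ outside $S \cup S'$ is closed and its complement in $X$ is the disjoint union of the two open pieces $S \setminus S'$ and $S' \setminus S$). Once this topological fact is in hand the rest is the short quasiconvexity argument above.
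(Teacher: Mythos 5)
Your overall strategy matches the paper's: use quasiconvexity to extract a short curve from $p$ to $q$ and then show that the curve must cross $S\cap S'$. But the way you try to establish the crossing has two genuine gaps, and the paper circumvents exactly these issues by a different decomposition.

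First, the set $S\setminus S'$ is \emph{not} open in $X$, so the parenthetical justification at the end of your argument (``$S\cap S'$ together with the rest of $X$ outside $S\cup S'$ is closed and its complement in $X$ is the disjoint union of the two open pieces'') fails. Every nonempty open subset of $X$ contains a whole tile $X_v$, and every $X_v$ contains a pillow at the next scale and hence points off the sheet $S$. Thus $S$ has empty interior in $X$, $S\setminus S'\subseteq S$ has empty interior, and the complement of $(S\cap S')\cup(X\setminus(S\cup S'))$ is not a disjoint union of opens. You would need a different topological argument to conclude that a curve from $S\setminus S'$ to $S'\setminus S$ in $X$ hits $S\cap S'$.

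Second, and more seriously, the claim that ``a point whose $n$-th coordinate lies in the boundary of the central square belongs to both $S$ and $S'$'' is false as stated. Having $\gamma^n(t_0)\in\partial(\text{central square})$ does force all \emph{later} coordinates $\gamma^{n+1}(t_0),\gamma^{n+2}(t_0),\dots$ to lie outside the central squares (this is the unique-preimage mechanism that the paper spells out), but it says nothing about the \emph{earlier} coordinates $\gamma^1(t_0),\dots,\gamma^{n-1}(t_0)$. The constraint $\pi_1(\gamma^{i+1}(t))=S_1(\gamma^i(t))$ does not prevent some $\gamma^i(t_0)$, $i<n$, from sitting in the interior of the $\iota_{1-w_i}$-copy of its central square, in which case $\gamma(t_0)$ lies in neither $S$ nor $S'$ despite having its $n$-th letter on the seam. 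Projecting to the single letter $\gamma^n$ discards exactly the information you need.

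The paper's proof handles both problems at once. It first reduces to the case $\Pi(p)=\Pi(q)$: for general $p,q$ it replaces $q$ by $g(p)$ where $g\in G$ sends $S$ to $S'$, applies the special case to the pair $(p,g(p))$, and controls the error with Lemma \ref{lem:sheetmove} and the triangle inequality. Then, in the special case, it projects the curve not to the $n$-th $X_1$-letter but to the finite cubical complex $X_n$ via $\pi_{\infty,n}$. In $X_n$ the two points $\pi_{\infty,n}(p),\pi_{\infty,n}(q)$ sit in the interiors of the two faces of a single pillow, and the seam $K_n$ genuinely separates these in $X_n$ (a finite complex where the topology is transparent). Because a point of $K_n$ encodes compatibility of \emph{all} of the first $n$ letters, the unique-preimage argument then shows $\pi_{\infty,n}^{-1}(K_n)\subseteq S\cap S'$, and the curve's projection crossing $K_n$ lifts to the curve itself crossing $S\cap S'$. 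I would suggest following that route: it avoids both the openness issue and the earlier-coordinate issue that your version leaves unresolved.
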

\begin{proof}
To simplify the argument, let us first assume that $\Pi(p) = \Pi(q)$. Let $n$ be the smallest integer such that $\pi_{\infty,n}(p)\neq \pi_{\infty,n}(q)$. It follows that $\pi_{\infty,n}(p)$ and $\pi_{\infty,n}(q)$ are each contained a cube (square) of the cubical complex $X_n$, and these cubes share a common boundary $K_n$. Moreover, $K_n\subseteq \pi_{\infty,n}(S) \cap \pi_{\infty,n}(S')$.

Observe that points in $K_n$ have unique pre-images in $X$ under $\pi_{\infty,n}$. Indeed, if
$$x=(x_1,\dots)\in X \text{ and } \pi_{\infty,n}(x)= (x_1, \dots, x_n) \in K_n,$$
then $x_n$ lies on the boundary of the central doubled square in $X_1$. By definition, $\pi_1(x_{n+1})=S_1(x_n)$, which lies on the outer boundary of $X_1$. There is only one possible $x_{n+1}$ satisfying this condition, namely $S_1(x_n)$ itself, and it lies on the outer boundary of $X_1$. Iterating this argument shows that $x_{n+2}, x_{n+3}, $ etc. and thus $x$ itself are uniquely determined.

Let $K=\pi_{\infty,n}^{-1}(K_n)\subseteq X$. The uniqueness of pre-images discussed above implies that $K\subseteq S\cap S'$. Because the metric $d_S$ is quasiconvex, we may find a curve $\gamma\in X$ with
$$ \ell(\gamma) \lesssim d_S(p,q).$$
The curve $\pi_{\infty,n}(\gamma)$ joins $\pi_{\infty,n}(p)$ to $\pi_{\infty,n}(q)$ and therefore contains a point of $K_n$. It follows that $\gamma$ contains a point $z\in K\subseteq S\cap S'$.

We then have
$$ d_S(p,z) + d_S(z,q) \leq \ell(\gamma) \lesssim d_S(p,q).$$

Now we remove the assumption that $\Pi(p)=\Pi(q)$. As $G$ acts transitively on sheets and $d_S$ is symmetric, we may find an isometry $g\in G$ such that $g(S)=S'$. Using our previous argument, there is a point $z\in S\cap S'$ with
$$ d_S(p,z) + d_S(z,gp) \lesssim d_S(p,gp).$$
It follows from this and Lemma \ref{lem:sheetmove} that
\begin{align*}
d_S(p,z)+d_S(z,q) &\leq d_S(p,z)+d_S(z,gp)+d_S(gp,q)\\ 
&\lesssim d_S(p,gp) + d_S(p,q)\\
&\leq d_S(p,q)+d_S(q,gp) + d_S(p,q)\\
&\lesssim d_S(p,q).
\end{align*}
\end{proof}

\begin{lemma}\label{lem:sheetmetric}
Suppose that $d_{SL}$ is a symmetric Loewner metric on $X$ that is quasisymmetric (by the identity map) to $d_0$. Then there is a metric $\rho$ on the unit square $[0,1]^2$ with the following properties:

\begin{enumerate}[(i)]
\item $\rho$ is quasisymmetric (by the identity map) to the standard Euclidean metric on $[0,1]^2$.
\item The map $\Pi\colon X \rightarrow [0,1]^2$ is a Lipschitz quotient map from $(X,d_{SL})$ to $([0,1]^2,\rho)$.
\item For each sheet $S\subset X$, $\Pi|_S$ is an isometry from $(S,d_{SL})$ to $([0,1]^2,\rho)$.
\end{enumerate}
\end{lemma}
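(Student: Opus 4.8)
The plan is to define $\rho$ as the push-forward of $d_{SL}$ under a fixed section of $\Pi$, and then to read off the three conclusions from the symmetry hypothesis together with Lemmas \ref{lem:relationships} and \ref{lem:twosheets}. Concretely, fix the zero word $\mathbf 0\in G$ and the associated section $s_{\mathbf 0}\colon [0,1]^2\to X$ of $\Pi$ (using the isometric identification of subsection \ref{subsec:pillowspace}), whose image $S_{\mathbf 0}$ is a sheet and which satisfies $\Pi\circ s_{\mathbf 0}=\mathrm{Id}$. I would set
\[
\rho(p,q):=d_{SL}(s_{\mathbf 0}(p),s_{\mathbf 0}(q)),\qquad p,q\in[0,1]^2 .
\]
Since $s_{\mathbf 0}$ is injective, $\rho$ is a genuine metric. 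The first key point is that this definition is independent of the chosen sheet: for any $w\in G$, Lemma \ref{lem:relationships} gives $s_w=\iota_w\circ s_{\mathbf 0}$, and $\iota_w$ is a $d_{SL}$-isometry precisely because $d_{SL}$ is symmetric, so $d_{SL}(s_w(p),s_w(q))=\rho(p,q)$ for all $p,q$. Hence for every sheet $S=s_w([0,1]^2)$ the map $\Pi|_S$ is a bijection onto $[0,1]^2$ with inverse $s_w$ and satisfies $\rho(\Pi(a),\Pi(b))=d_{SL}(a,b)$ for $a,b\in S$; this is exactly conclusion (iii). For the topology: $s_{\mathbf 0}$ is an isometric embedding of $([0,1]^2,|\cdot|)$ into $(X,d_0)$, the identity $(X,d_0)\to(X,d_{SL})$ is a homeomorphism, and $s_{\mathbf 0}$ is tautologically an isometry from $([0,1]^2,\rho)$ onto $(S_{\mathbf 0},d_{SL})$; chasing these bijections shows the $\rho$-topology on $[0,1]^2$ coincides with the Euclidean one, so the identity $([0,1]^2,|\cdot|)\to([0,1]^2,\rho)$ is a homeomorphism.

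Conclusion (i) is then immediate: using that $s_{\mathbf 0}$ is an isometric embedding for $d_0$ and that the identity is an $\eta$-quasisymmetry from $(X,d_0)$ to $(X,d_{SL})$, for $p,q,r\in[0,1]^2$ with $p\neq r$ we get
\[
\frac{\rho(p,q)}{\rho(p,r)}=\frac{d_{SL}(s_{\mathbf 0}p,s_{\mathbf 0}q)}{d_{SL}(s_{\mathbf 0}p,s_{\mathbf 0}r)}\le\eta\!\left(\frac{d_0(s_{\mathbf 0}p,s_{\mathbf 0}q)}{d_0(s_{\mathbf 0}p,s_{\mathbf 0}r)}\right)=\eta\!\left(\frac{|p-q|}{|p-r|}\right),
\]
so the identity map is an $\eta$-quasisymmetry from $([0,1]^2,|\cdot|)$ to $([0,1]^2,\rho)$.

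For conclusion (ii) I would verify the two inclusions in the definition of a Lipschitz quotient map separately. The inclusion $B_\rho(\Pi x,r)\subseteq\Pi(B_{d_{SL}}(x,r))$ is essentially free: picking any sheet $S\ni x$ and, for $y'\in B_\rho(\Pi x,r)$, setting $y:=(\Pi|_S)^{-1}(y')$, conclusion (iii) gives $\Pi(y)=y'$ and $d_{SL}(x,y)=\rho(\Pi x,y')<r$. The reverse inclusion is the claim that $\Pi$ is $L$-Lipschitz from $(X,d_{SL})$ to $([0,1]^2,\rho)$, and this is the one place where the full strength of the hypotheses is used. Given $x,y\in X$, choose sheets $S_x\ni x$, $S_y\ni y$; since $d_{SL}$ is symmetric and Loewner — hence quasiconvex — Lemma \ref{lem:twosheets} produces $z\in S_x\cap S_y$ with $d_{SL}(x,z)+d_{SL}(z,y)\lesssim d_{SL}(x,y)$, and then applying (iii) on $S_x$ and on $S_y$ gives
\[
\rho(\Pi x,\Pi y)\le\rho(\Pi x,\Pi z)+\rho(\Pi z,\Pi y)=d_{SL}(x,z)+d_{SL}(z,y)\lesssim d_{SL}(x,y).
\]
Taking $L$ larger than both this implied constant and $1$ finishes (ii).

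In this argument the only real content is packaged into Lemma \ref{lem:twosheets} — and, behind it, the symmetry of $d_{SL}$ and the quasiconvexity coming from the Loewner property — which is already available; the rest is bookkeeping. Thus I do not expect a genuine obstacle beyond being careful about which pair of points lies in which sheet and about the identifications of $[0,1]^2$ with the various copies of the square appearing in subsection \ref{subsec:pillowspace}.
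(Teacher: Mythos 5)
Your proof is correct and follows the same overall architecture as the paper's: define $\rho$ by pulling back $d_{SL}$ along a fixed section of $\Pi$, get (iii) from the sheet-transitive isometric $G$-action, get (i) by unwinding the quasisymmetry through the isometric identification $s_{\mathbf 0}$, and then check the two inclusions defining a Lipschitz quotient. The one place where you take a different route is the Lipschitz bound $\rho(\Pi x,\Pi y)\lesssim d_{SL}(x,y)$. You go through Lemma~\ref{lem:twosheets}: choose a point $z$ in the intersection of the two relevant sheets with $d_{SL}(x,z)+d_{SL}(z,y)\lesssim d_{SL}(x,y)$, then apply (iii) twice. This works, but it invokes quasiconvexity of $d_{SL}$ (via the Loewner hypothesis). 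The paper instead argues more directly: it writes $\rho(\Pi p,\Pi q)=d_{SL}(hg(p),h(q))=d_{SL}(g(p),q)$ by choosing $g\in G$ moving $p$'s sheet onto $q$'s sheet and $h\in G$ moving that sheet onto $S_0$, and then applies Lemma~\ref{lem:sheetmove} once. That route needs only the symmetry and quasisymmetry, not quasiconvexity, so it establishes Lemma~\ref{lem:sheetmetric} under slightly weaker hypotheses (and with a constant $\eta(1)$ that is explicit). In the present setting the Loewner hypothesis is available anyway, so the difference is cosmetic; but if one ever wanted to decouple the Lipschitz-quotient conclusion (ii) from the Loewner assumption, the paper's version of the Lipschitz step would survive while yours would not. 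Your verification of the co-Lipschitz inclusion and of (i) and (iii) matches the paper's, and your explicit remark that the $\rho$- and Euclidean topologies agree (so the identity is a homeomorphism) is a detail the paper leaves implicit but which is indeed needed for (i) to make sense.
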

\begin{proof}
Fix a sheet $S_0\subseteq X$, and recall that the map $\Pi|_{S_0}$ is an isometry from $(S_0,d_0)$ to $([0,1]^2, |\cdot|)$. We define the metric $\rho$ on $[0,1]^2$ simply by
$$ \rho(x,y) = d_{SL}( (\Pi|_{S_0})^{-1}(x), (\Pi|_{S_0})^{-1}(y)).$$
It is immediate from the symmetry of $d_{SL}$ and the definition of $\rho$ that $\Pi$ is an isometry from $(S, d_{SL})$ to $([0,1]^2, \rho)$, proving (iii). Item (i) is also immediate: $([0,1]^2, \rho)$ is isometric to $(S_0,d_{SL})$, which is quasisymmetric to $(S_0,d_0)$, which is isometric to $([0,1]^2, |\cdot|)$.

It remains to prove (ii), that $\Pi$ is a Lipschitz quotient map from $(X,d_{SL})$ onto $([0,1]^2,\rho)$. First, we show that $\Pi$ is Lipschitz. Fix $p,q\in X$. Choose a sheet $S$ containing $p$ and an element $g\in G$ such that $g(S)$ contains $q$ (using the fact that $G$ acts transitively on the sheets). We may also choose an element $h\in G$ such that $h(g(S))=S_0$. By Lemma \ref{lem:sheetmove},
$$ \rho(\Pi(p),\Pi(q)) = d_{SL}(hg(p), h(q)) = d_{SL}(g(p),q) \lesssim d_{SL}(p, q).$$

Next we show the ``co-Lipschitz'' condition. Fix any $x\in X$, $r>0$, and $y\in B_\rho(\Pi(x), r) \subseteq [0,1]^2$. Let $S$ be a sheet containing $x$. By the transitivity of the group $G$ on the sheets of $X$, there is an element $g\in G$ such that $g(S) = S_0$. Because $d_{SL}$ is symmetric, $g$ is an isometry of $(X,d_{SL})$. By definition of $\rho$, there is a point $z\in S_0$ such that $\Pi(z)=y$ and
$$ d_{SL}(z,g(x)) = \rho(\Pi(z), \Pi(g(x)))=\rho(y,\Pi(x)) < r.$$

Since $g$ is an isometry that maps $S$ to $S_0$, we have $g^{-1}(z)\in S$ and 
$$ d_{SL}(g^{-1}(z), x) =  d_{SL}(z, g(x)) < r. $$
Moreover, $\Pi(g^{-1}(z)) = \Pi(z)=y$. Therefore, $y\in \Pi(B_{SL}(x,r))$. Since $y$ was arbitrary in $B_\rho(\Pi(x),r))$, we have
$$ B_\rho(\Pi(x),r) \subseteq \Pi(B_{SL}(x,r)),$$
i.e., that $\Pi$ is a Lipschitz quotient mapping.
\end{proof}

\begin{corollary}\label{cor:sheetPI}
Suppose that $d_{SL}$ is a symmetric Loewner metric on $X$ that is quasisymmetric (by the identity map) to $d_0$. Let $\rho$ be the metric on $[0,1]^2$ provided by Lemma \ref{lem:sheetmetric}. Then the space $([0,1]^2,\rho, \Pi_*(\mathcal{H}^Q))$ is a PI space.
\end{corollary}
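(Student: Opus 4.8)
The plan is to obtain this as an essentially immediate consequence of Theorem \ref{thm:LQ}, applied to the PI space $(X, d_{SL}, \mathcal{H}^Q)$ and the Lipschitz quotient map $\Pi$. First I would record that, by hypothesis, $(X,d_{SL})$ is $Q$-Loewner; being a quasisymmetric image of the compact space $(X,d_0)$ it is compact, and by definition it is Ahlfors $Q$-regular with $\mathcal{H}^Q = \mathcal{H}^Q_{d_{SL}}$ the associated Ahlfors-regular measure. Theorem \ref{thm:LoewnerPI} (Heinonen--Koskela) then shows that $(X, d_{SL}, \mathcal{H}^Q)$ is a $Q$-PI space, and in particular a PI space in the sense required by Theorem \ref{thm:LQ}.

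Next, Lemma \ref{lem:sheetmetric}(ii) tells us precisely that $\Pi\colon (X,d_{SL}) \to ([0,1]^2,\rho)$ is a Lipschitz quotient map, so the hypotheses of Theorem \ref{thm:LQ} are in place once we check that $\nu := \Pi_{*}(\mathcal{H}^Q)$ is finite on some ball. This is immediate: since $(X,d_{SL})$ is compact and Ahlfors $Q$-regular, $\mathcal{H}^Q(X) < \infty$, hence $\nu$ is a finite measure on $[0,1]^2$ and in particular is finite on every $\rho$-ball. One should also note that $([0,1]^2,\rho)$ is complete, which is clear since by construction it is isometric to the sheet $(S_0, d_{SL})$, a closed subset of the compact space $(X,d_{SL})$. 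Applying Theorem \ref{thm:LQ} then yields that $([0,1]^2, \rho, \nu)$ is a PI space, which is exactly the assertion of the corollary.

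I do not expect a genuine obstacle here: the substantive work has already been carried out, either in Lemma \ref{lem:sheetmetric} --- where the symmetry of $d_{SL}$ and the transitivity of the $G$-action on sheets are used to show that $\Pi$ is a Lipschitz quotient map --- or in Theorem \ref{thm:LQ} itself, whose proof is deferred to the Appendix. The only points deserving a sentence of care are the identification of the measure underlying the $Q$-PI/Loewner structure on $X$ with $\mathcal{H}^Q_{d_{SL}}$, so that Theorem \ref{thm:LoewnerPI} applies verbatim, and the finiteness of the pushforward measure, both of which follow at once from compactness and Ahlfors regularity.
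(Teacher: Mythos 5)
Your proof is correct and follows precisely the route the paper intends: Lemma \ref{lem:sheetmetric}(ii) to get the Lipschitz quotient map, Theorem \ref{thm:LoewnerPI} to convert the $Q$-Loewner hypothesis on $(X,d_{SL},\mathcal{H}^Q)$ into a $Q$-PI statement, and Theorem \ref{thm:LQ} to push the PI property forward. The paper's own proof is a one-sentence citation of these same three results, so your write-up is just a more detailed unpacking of the identical argument, with the extra (and reasonable) observations about compactness, completeness, and finiteness of the pushforward measure made explicit.
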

\begin{proof}
This follows immediately from Lemma \ref{lem:sheetmetric}(ii), Theorem \ref{thm:LoewnerPI}, and Theorem \ref{thm:LQ}.
\end{proof}

\section{Proof of Theorem \ref{thm:main} and Corollary \ref{cor:1dimplane}}\label{sec:mainproof}
In this section, we complete the proof of Theorem \ref{thm:main} (followed by that of Corollary \ref{cor:1dimplane}). The idea is that if $(X,d_0)$ were quasisymmetric to a Loewner space, then that Loewner space could be taken to be symmetric (by Proposition \ref{prop:SL}), and then the planar metric measure space constructed in Lemma \ref{lem:sheetmetric} would be the desired ``analytically one-dimensional square''. The main work lies in the next lemma, which essentially shows that, in this scenario, the planar metric measure space is purely $2$-unrectifiable.

\begin{lemma}\label{lem:unrect}
Suppose that $(X,d_0)$ is quasisymmetric (by the identity) to a symmetric, $Q$-Loewner space $(X,d_{SL})$. Let $([0,1]^2, \rho, \mu=\Pi_*(\mathcal{H}^Q))$ be the associated metric measure space constructed in Lemma \ref{lem:sheetmetric}. 

If $E$ is a compact subset of $([0,1]^2, |\cdot|)$ and $f\colon E \rightarrow ([0,1]^2, \rho)$ is a bi-Lipschitz embedding, then $\mu(f(E))=0$.
\end{lemma}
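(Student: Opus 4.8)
The plan is to argue by contradiction: suppose there is a compact $E \subseteq ([0,1]^2, |\cdot|)$ and a bi-Lipschitz embedding $f \colon E \to ([0,1]^2, \rho)$ with $\mu(f(E)) > 0$. Since $\Pi$ is a Lipschitz quotient map and $\Pi|_S$ is an isometry onto $([0,1]^2,\rho)$ for each sheet $S$, pulling $f(E)$ back to a sheet $S_0$ gives a bi-Lipschitz copy of $E$ sitting inside $(S_0, d_{SL}) \subseteq (X,d_{SL})$ with positive $\mathcal{H}^Q$-measure. The goal is to blow up at a density point of this set and derive a contradiction with Lemma \ref{lem:notloewner} — namely, that a blowup argument forces $d_{SL}$ to be bi-Lipschitz to $d_0$ on the relevant piece, which would make $(X,d_0)$ Loewner.

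**Key steps.** First I would locate a point $x_0 \in f(E)$ (equivalently, a point in the sheet) that is simultaneously: a Lebesgue density point of $f(E)$ for $\mu$; a point of approximate differentiability of $f^{-1}$ (available since $f^{-1}$ is a Lipschitz map from a subset of a metric space into $\mathbb{R}^2$, and such maps are differentiable $\mathcal{H}^2$-a.e. in the sense of a well-defined blow-up on the Euclidean side); and a point where the word-encoding blow-up procedure of Section \ref{sec:blowup} behaves well. Using Theorem \ref{thm:davidkleiner} applied to the PI space $([0,1]^2,\rho,\mu)$ from Corollary \ref{cor:sheetPI} — whose Gromov–Hausdorff blowups are planes since it is quasisymmetric to $[0,1]^2$ — the analytic dimension is at most $2$; the point of the lemma is to rule out the dimension-$2$ case, and the $2$-rectifiability conclusion of that theorem is exactly what would give the bi-Lipschitz embedding we have hypothesized, so Lemma \ref{lem:unrect} is what lets us exclude it. Then I would take a blow-up $\hat{d}$ of $d_{SL}$ at $x_0$ along the sequence of scales $\lambda_n = \operatorname{diam}_{d_{SL}}(X_{w(n)})$, and simultaneously blow up $f(E)$: the density-point property ensures the blown-up set fills the corresponding blow-up of the sheet (up to measure zero), while the Euclidean differentiability of $f^{-1}$ at $x_0$ ensures that on the Euclidean side the blow-up is a linear image of $\mathbb{R}^2$, hence bi-Lipschitz to a Euclidean square. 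Since $\Pi$ restricted to a sheet is an isometry in both metrics and commutes appropriately with the $F_{w(n)}$ (via Lemma \ref{lem:relationships}), the blow-up $\hat d$ restricted to (the blow-up of) the sheet $S_0$ is bi-Lipschitz to the Euclidean metric. By self-similarity and the symmetry of $d_{SL}$ (preserved under blow-ups, Lemma \ref{lem:symblowup}), this bi-Lipschitz comparison propagates from one sheet to all of $(X,\hat d)$: the transition maps $\iota_w$ are isometries and the sheets cover $X$, so $\hat d$ is bi-Lipschitz to $d_0$ on all of $X$.

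**The contradiction.** Once $\hat{d}$ is bi-Lipschitz to $d_0$, Lemma \ref{lem:blowup}(v) says $(X,\hat d)$ is $Q$-Loewner (as a blow-up of the Loewner space $d_{SL}$), hence $(X,d_0)$ is $Q$-Loewner — and since $(X,d_0)$ is Ahlfors $p$-regular with $p = \log 10/\log 3$, bi-Lipschitz invariance of Ahlfors regularity forces $Q = p$, so $(X,d_0)$ would be $p$-Loewner, contradicting Lemma \ref{lem:notloewner}. Therefore $\mu(f(E)) = 0$.

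**Main obstacle.** The delicate point is making the blow-up of the \emph{set} $f(E)$ match up with the ad hoc word-encoding blow-up of the \emph{metric}: the scales $\lambda_n$ coming from the self-similar structure need not be the ones along which $f(E)$ has a clean tangent, and the Euclidean tangent of $E$ at a density point is a priori only a half-space or linear subspace along a possibly sparse sequence. The technical heart is therefore to choose the density point $x_0$ and a single common sequence of scales so that (a) the normalized copies of $f(E)$ converge to a set of full measure in the blow-up sheet, (b) $f^{-1}$ is approximately linear at that scale, and (c) the blow-up metric $\hat d$ exists along that same subsequence — juggling the subsequence extractions in Lemma \ref{lem:blowup}(i) against the density/differentiability statements, and checking that "full measure in the blow-up sheet plus bi-Lipschitz to Euclidean on that full-measure set" actually upgrades to "bi-Lipschitz to Euclidean on the whole sheet" (using that both metrics are geodesic, or at least quasiconvex, and length-comparable on a dense set).
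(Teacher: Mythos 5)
Your overall arc---blow up at a density point, derive a bi-Lipschitz comparison with $d_0$, contradict Lemma \ref{lem:notloewner}---matches the paper's strategy, but there is a significant missing step between ``each sheet of the blowup is bi-Lipschitz to (a subset of) $\mathbb{R}^2$'' and ``the identity map from $(S,\hat d)$ to $(S,d_0)$ is bi-Lipschitz,'' and this gap is load-bearing. Your sentence ``this bi-Lipschitz comparison propagates from one sheet to all of $(X,\hat d)$: the transition maps $\iota_w$ are isometries and the sheets cover $X$, so $\hat d$ is bi-Lipschitz to $d_0$ on all of $X$'' conflates two different things. The blowup argument (however one runs it) produces \emph{some} bi-Lipschitz homeomorphism $h$ from the sheet onto a subset of $\mathbb{R}^2$; it says nothing about whether the \emph{identity} map $(S,\hat d)\to(S,d_0)$ is bi-Lipschitz, and the latter is what the $\iota_w$/covering argument needs. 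The paper inserts a second, non-trivial blowup step (Claim \ref{claim:sheetbilip2}) to close exactly this gap: composing the bi-Lipschitz parametrization with the quasisymmetric identity map gives a planar quasisymmetry, which by a classical fact is bi-Lipschitz on a positive-measure set; one then blows up \emph{again} at a density point of that set, and only after this does the identity become bi-Lipschitz sheet-by-sheet. Your proposal has no analogue of this, and without it the contradiction does not go through.

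Two smaller remarks. First, your invocation of ``approximate differentiability of $f^{-1}$'' is shaky as stated: $f^{-1}$ is a Lipschitz map defined only on the compact set $f(E)$ inside a metric space, and a.e.\ differentiability of such maps is not a standard fact (it holds after McShane-extending to the ambient PI space and using Cheeger differentiability, which is what you would actually need to do; the paper instead avoids differentiability entirely, using McShane extension plus the density-point estimate \eqref{eq:densitylip} to get both Lipschitz bounds on the limit $h$). Second, even granting identity-bi-Lipschitz on each sheet, gluing across sheets to get a global bi-Lipschitz comparison requires controlling cross-sheet distances; this is the content of Lemma \ref{lem:twosheets} (passing through a common point in $S\cap S'$ via quasiconvexity), which you gesture at in your final paragraph but do not actually address.
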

\begin{proof}
Let us assume, under the conditions of the lemma, that we have such a bi-Lipschitz embedding $f$ of a compact subset $E$ of the standard square into $([0,1]^2, \rho, \mu)$, and suppose that its image $F=f(E)$ has $\mu(F)>0$. Recall that, by Lemma \ref{lem:sheetmetric}, $\Pi\colon (X,d_{SL})\rightarrow ([0,1]^2,\rho)$ is a Lipschitz quotient, and that, by Corollary \ref{cor:sheetPI}, the space $([0,1]^2, \rho, \mu)$ is a PI space.

\begin{claim}\label{claim:sheetbilip}
Under these assumptions, there is a constant $L$ and a blowup metric $\hat{d}_{SL}$ of $d_{SL}$ such that each sheet of $(X,\hat{d}_{SL})$ is $L$-bi-Lipschitz equivalent to the standard Euclidean square $([0,1]^2,|\cdot|)$.
\end{claim}

\begin{proof}
Let $x\in ([0,1]^2,\rho,\mu)$ be a $\mu$-density point of $f(E)$. (See, e.g., \cite[Ch. 1]{He} for the Lebesgue density theorem in this generality.) Let $z$ be any point of $\Pi^{-1}(x)\subseteq X$. Fix a sequence of scales $(\lambda_n)\rightarrow 0$ and associated word $w$ satisfying $F(w)=z$ for which there is a blowup of $d_{SL}$ along $w$ at $z$. (Such a blowup exists by Lemma \ref{lem:blowup}.) We label this blowup metric $\hat{d}_{SL}$. By Definition \ref{def:blowup}, 
\begin{equation}\label{eq:dslblowup}
 \hat{d}_{SL}(p,q) = \lim_{n\rightarrow\infty} d_{SL}^n(p,q)= \lim_{n\rightarrow\infty} \lambda_n^{-1} d_{SL}(F_{w(n)}(p), F_{w(n)}(q)) \text{ uniformly over } p,q\in X.
 \end{equation}
We remind the reader that \eqref{eq:dslblowup} holds only along a subsequence, but to avoid cumbersome notation, we index it as above, considering the limit only along $n$ in the appropriate subsequence. 

The map $f^{-1}$ is bi-Lipschitz from $f(E)\subseteq ([0,1]^2, \rho)$ to the Euclidean square. It therefore has, by McShane's extension theorem, a Lipschitz extension $g$ defined on all of $([0,1]^2,\rho)$. Consider the functions from $X$ to $([0,1]^2, |\cdot|)$ defined by
$$ h_n := \lambda_n^{-1}(g\circ \Pi\circ F_{w(n)}  - g\circ \Pi\circ F_{w(n)}(z)).$$
Observe that, for each $n\in\N$,
\begin{align*}
|h_n(p)-h_n(q)| &= \lambda_n^{-1}|g(\Pi(F_{w(n)}(p))) - g(\Pi(F_{w(n)}(q)))|\\
&\lesssim \lambda_n^{-1}d_{SL}(F_{w(n)}(p), F_{w(n)}(q))\\
&= d_{SL}^n(p,q).
\end{align*}

By a standard Arzel\`a-Ascoli type argument, a subsequence of $\{h_n\}$ therefore converges uniformly in the metric $\hat{d}_{SL}$ to a Lipschitz function $h\colon (X,\hat{d}_{SL}) \rightarrow ([0,1]^2,|\cdot|)$. 

Now consider two points $p,q$ in the same sheet $S$ of $X$. We will next show that 
$$ |h(p)-h(q)| \gtrsim \hat{d}_{SL}(p,q).$$
First, observe that if $y_1, y_2 \in B_\rho(x,r)$, then 
\begin{equation}\label{eq:densitylip}
 |g(y_1) - g(y_2)| \gtrsim \rho(y_1,y_2) - o(r) \text{ as } r\rightarrow 0.
 \end{equation}
This follows from the facts that $x$ is a point of $\mu$-density of $f(E)$, $\mu$ is doubling, and $f^{-1}$ is bi-Lipschitz.

Because $p$ and $q$ are in the same sheet, so are $F_{w(n)}(p)$ and $F_{w(n)}(q)$ for each $n$. By definition of $\hat{d}_{SL}$, we also have
\begin{equation}\label{eq:densitylip2}
\rho(\Pi(F_{w(n)}(p)), x) \leq d_{SL}(F_{w(n)}(p), z) = \lambda_n(\hat{d}_{SL}(p, F_{w(n)}^{-1}(z)) + o(1)) =  O(\lambda_n) \text{ as } n\rightarrow \infty,
\end{equation}
and similarly for $q$. 

Using Lemma \ref{lem:sheetmetric}(iii), \eqref{eq:densitylip}, and \eqref{eq:densitylip2}, it follows that
\begin{align*}
\hat{d}_{SL}(p,q) &= \lim_{n\rightarrow\infty} \lambda_n^{-1} d_{SL}(F_{w(n)}(p), F_{w(n)}(q))\\
&= \lim_{n\rightarrow\infty} \lambda_n^{-1} \rho(\Pi(F_{w(n)}(p)), \Pi(F_{w(n)}(q)))\\
&\lesssim \lim_{n\rightarrow\infty} \lambda_n^{-1}( | g(\Pi(F_{w(n)}(p))) - g(\Pi(F_{w(n)}(q)))| + o(\lambda_n))\\
&= \lim_{n\rightarrow\infty} |h_n(p) - h_n(q)|\\
&= |h(p) - h(q)|
\end{align*}
Thus, $h$ is bi-Lipschitz on each sheet with uniform constant, proving Claim \ref{claim:sheetbilip}.
\end{proof}

We now fix the blowup $(X,\hat{d}_{SL})$ from Claim \ref{claim:sheetbilip}, which (by Lemma \ref{lem:blowup}) is quasisymmetric by the identity map to $(X,d_{SL})$ and hence to $(X,d_0)$. Our next task is to slightly improve the conclusion of Claim \ref{claim:sheetbilip} (after a further blowup) as follows.

\begin{claim}\label{claim:sheetbilip2}
There is a constant $M$ and a blowup metric $\tilde{d}_{SL}$ of $\hat{d}_{SL}$ such that, for each sheet $S\subseteq X$, the restriction of the identity map
$$ \text{id}:(S, \tilde{d}_{SL}) \rightarrow (S, d_0)$$
is $M$-bi-Lipschitz.
\end{claim}
\begin{proof}

Fix a sheet $S_0\subseteq X$. By Claim \ref{claim:sheetbilip}, there is a bi-Lipschitz map 
\[
\psi\colon ([0,1]^2,|\cdot|) \rightarrow (S_0,\hat{d}_{SL}).
\]
Because the identity map from $(X,\hat{d}_{SL})$ to $(X,d_0)$ is quasisymmetric, the map
\[
\psi' = \text{id} \circ \psi \colon ([0,1]^2,|\cdot|) \rightarrow (S_0,d_0)
\]
is quasisymmetric. Recall that $(S_0,d_0)$ is itself isometric to the Euclidean unit square, by Lemma \ref{lem:relationships}.  It thus  follows, by a well-known property of quasisymmetric mappings in Euclidean space, that there is a set $F\subseteq [0,1]^2$ of positive $\mathcal{H}^2$-measure on which the restriction of $\psi'$ is bi-Lipschitz; see, e.g., \cite[Proposition 1.6]{A}. Alternatively, to see this, one could use the fact that planar quasisymmetric maps are quasiconformal and thus they are Sobolev homeomorphisms with a Jacobian that is almost everywhere non-vanishing. Such a Sobolev homeomorphism can be decomposed into bi-Lipschitz parts by the use of classical Lusin-type arguments; see, for example, the argument just before \cite[Lemma 10.7]{HKS}.

Therefore, there is a compact subset $E=\psi(F)\subseteq S_0$ of positive $\mathcal{H}^2_{\hat{d}_{SL}}$-measure such that the identity map from $(E,\hat{d}_{SL})$ to $(E,d_0)$ is bi-Lipschitz.

Let $x\in E$ be a point of $\mathcal{H}^2_{\hat{d}_{SL}}$-density of $E$ within $(S_0, \hat{d}_{SL})$. (Recall that $(S_0,\hat{d}_{SL})$ is bi-Lipschitz equivalent to the Euclidean square, so Lebesgue's density theorem can be applied.) 

If $y_1,y_2 \in S_0 \cap B_{\hat{d}_{SL}}(x,r)$, then each is distance $o(r)$ away from the set $E$, and so
\begin{equation}\label{eq:densitybilip}
 \hat{d}_{SL}(y_1, y_2) \approx d_0(y_1, y_2) +o(r) \text{ as } r\rightarrow 0,
\end{equation}
with implied constant independent of $y_1, y_2$.

Fix a word $w$ with $F(w)=x$, and blow up $\hat{d}_{SL}$ along $w$ with a (sub)sequence of scales $(\eta_n)\rightarrow 0$ to obtain a new metric $\tilde{d}_{SL}$ on $X$. This metric is also symmetric by Lemma \ref{lem:symblowup}. 

Now fix a sheet $S\subseteq X$. Since $F_{w(n)}(S)$ is also a sheet for each $n$, by Lemma \ref{lem:relationships} we may find a (sheet-preserving) isometry $g_n\in G$ such that $g_n(F_{w(n)}(S))=S_0$.

Now fix two arbitrary distinct points $p,q\in S$. Exactly as in \eqref{eq:densitylip2}, we have
$$ \hat{d}_{SL}(F_{w(n)}(p), x) = O(\eta_n) \text{ as } n\rightarrow\infty \text{ along the subsequence},$$
and similarly for $q$. It follows from Lemma \ref{lem:sheetmove} that
\begin{equation}\label{eq:sheetmove}
\hat{d}_{SL}(g_n F_{w(n)}(p), x) = O(\eta_n) \text{ as } n\rightarrow\infty \text{ along the subsequence},
\end{equation}
and similarly for $q$.

Therefore, we have (as $n\rightarrow\infty$ along the chosen subsequence) that
\begin{align*}
\tilde{d}_{SL}(p, q) &=  \eta_n^{-1} \hat{d}_{SL}(F_{w(n)}(p), F_{w(n)}(q)) + o(1) && \text{(definition of blowup)}\\
&= \eta_n^{-1} \hat{d}_{SL}(g_nF_{w(n)}(p), g_nF_{w(n)}(q)) + o(1) && \text{(symmetry of } \hat{d}_{SL}\text{)}\\
&\approx \eta_n^{-1} \left( d_0(g_nF_{w(n)}(p), g_nF_{w(n)}(q)) + o(\eta_n) \right) + o(1) && \text{(\eqref{eq:sheetmove} and \eqref{eq:densitybilip})}\\
&= \eta_n^{-1} d_0(F_{w(n)}(p), F_{w(n)}(q)) + o(1) && \text{(symmetry of } {d}_{0}\text{)}\\
&=\eta_n^{-1} 3^{-n} d_0(p, q) + o(1). && \text{(}F_{w(n)}\text{ scales } d_0 \text{ by } 3^{-n}\text{)}\\
\end{align*}

It follows in particular that the numerical sequence $\{\eta_n^{-1} 3^{-n}\}$ must be bounded above and bounded away from $0$. By passing to a further subsequence, we may therefore assume that this sequence converges to a number $c\in (0,\infty)$. Note that this value is independent of $p$, $q$, and $S$. It then follows from the above calculation that
$$ \tilde{d}_{SL}(p,q) \approx d_0(p,q)$$
when $p,q$ are in the same sheet, proving Claim \ref{claim:sheetbilip2}.
\end{proof}

We now fix the metric $\tilde{d}_{SL}$ constructed in Claim \ref{claim:sheetbilip2}. Recall that this metric is a blowup of $\hat{d}_{SL}$, which was a blowup of $d_{SL}$, which was a $Q$-Loewner metric on $X$ quasisymmetric to $d_0$. By Lemmas \ref{lem:blowup} and \ref{lem:symblowup}, the metric $\tilde{d}_{SL}$ is therefore $Q$-Loewner,  symmetric, and quasisymmetric (by the identity) to $d_0$.

\begin{claim}\label{claim:pillowbilip}
The identity map from $(X,\tilde{d}_{SL})$ to $(X,d_0)$ is bi-Lipschitz.
\end{claim}
\begin{proof}
   
Note that both $\tilde{d}_{SL}$ and $d_0$ are symmetric. In addition, both are quasiconvex metrics on $X$, the former because it is Loewner and the latter because it is geodesic.

Consider any $p,q\in X$. Let $S$ and $S'$ be sheets containing $p$ and $q$, respectively. By Lemma \ref{lem:twosheets}, there is a point $z\in S\cap S'$ such that
$$ \tilde{d}_{SL}(p,z) + \tilde{d}_{SL}(z,q) \lesssim \tilde{d}_{SL}(p,q).$$

Using Claim \ref{claim:sheetbilip2}, we have therefore that
$$\tilde{d}_{SL}(p,q) \gtrsim d_0(p,z)+d_0(z,q) \geq d_0(p,q). $$

The reverse inequality $\tilde{d}_{SL}(p,q) \lesssim d_0(p,q)$ follows by applying the exact same argument beginning with $d_0$ rather than $\tilde{d}_{SL}$.
\end{proof}

Claim \ref{claim:pillowbilip} now leads to a contradiction. The space $(X,\tilde{d}_{SL},\mathcal{H}^Q)$ is a $Q$-Loewner space. If $(X,d_0)$ were bi-Lipschitz equivalent to it, then $Q$ would be the Hausdorff dimension of $(X,d_0)$ and $(X,d_0,\mathcal{H}^Q)$ would be a Loewner space. However, this is not the case by Lemma \ref{lem:notloewner}. This completes the proof of Lemma \ref{lem:unrect}.
\end{proof}

We are now ready to prove Theorem \ref{thm:main}.
\begin{proof}[Proof of Theorem \ref{thm:main}]

Suppose that $(X,d_0)$ were quasisymmetric to a $Q$-Loewner space.  By Proposition \ref{prop:SL}, $(X,d_0)$ would then be quasisymmetric to a symmetric $Q$-Loewner space. Without loss of generality, we may write this Loewner space as $(X,d_{SL})$ with the identity map a quasisymmetry.

Consider the metric measure space $([0,1]^2, \rho, \mu)$ derived from $(X,d_{SL})$ in Lemma \ref{lem:sheetmetric}. By Lemma \ref{lem:sheetmetric} and Corollary \ref{cor:sheetPI}, this is a PI space that is quasisymmetric (by the identity) to the standard square. It remains only to show that this space has analytic dimension $1$.

For each point $p\in [0,1]^2 \setminus \partial ([0,1]^2)$, all the Gromov--Hausdorff blowups of $([0,1]^2,\rho)$ at the point $p$ are homeomorphic (indeed, quasisymmetric) to the plane. This follows by standard compactness results for quasisymmetric mappings; see, e.g., \cite[Lemma 2.4.7]{KL}.

We note also that the boundary of the Euclidean square is porous in the whole square and quasisymmetries preserve porosity, so the set $\partial ([0,1]^2)$ is porous in $([0,1]^2,\rho)$. Since $\mu$ is a doubling measure, it follows that the boundary of the square has $\mu$-measure zero. To summarize the above discussion, $([0,1]^2, \rho,\mu)$ is a PI space, and at almost every point of the space, all blowups are homeomorphic to $\mathbb{R}^2$.

Because $([0,1]^2, \rho,\mu)$ is a PI space, it has an analytic dimension $n\in\N$ by Theorem \ref{thm:cheeger}. By Theorem \ref{thm:davidkleiner}, the dimension $n$ must be either $1$ or $2$. Suppose that $n=2$. By Theorem \ref{thm:davidkleiner}, this forces $\mu|_U$ to be $2$-rectifiable and have $\mu|_U << \mathcal{H}^2$. In particular, we may find a subset $A\subseteq \RR^2$ and a Lipschitz map $f\colon A\rightarrow ([0,1]^2,\rho)$ with $\mu(A)>0$ (and hence $\mathcal{H}^2(A)>0$). By a result of Kirchheim \cite[Lemma 4]{Kirchheim}, we may obtain compact subsets $E_i\subseteq A$ such that $f|_{E_i}$ is bi-Lipschitz for each $i$ and $\mathcal{H}^2(f(A\setminus \cup_i E_i)) = 0$. It follows by the absolute continuity $\mu<<\mathcal{H}^2$ that $\mu(f(A\setminus \cup_i E_i))=0$ and hence that there is some $i$ for which $\mu(f(E_i))>0$.

This contradicts Lemma \ref{lem:unrect}. Therefore, the analytic dimension of the space $([0,1]^2, \rho, \mu)$ is $1$.
\end{proof}

\begin{proof}[Proof of Corollary \ref{cor:1dimplane}]
Suppose that $(X,d_0)$ is quasisymmetric (by the identity, without loss of generality) to a $Q$-Loewner space. By Proposition \ref{prop:SL}, it is quasisymmetric to a symmetric $Q$-Loewner space $(X,d_{SL})$, which induces a PI structure $([0,1]^2, \rho, \mu)$ on the unit square by Lemma \ref{lem:sheetmetric}.

Fix a point $x\in X$ and $y=\Pi(x)\in [0,1]^2$ such that $y$ is not on the boundary of the square. (As noted in the proof of Theorem \ref{thm:main}, this boundary has $\mu$-measure zero.)

The space $([0,1]^2, \rho, \mu)$ has a pointed measured Gromov--Hausdorff blowup at $y$, which we will denote $(P,\hat{\rho},\hat{\mu})$. We claim that $P$ is the desired ``analytically one-dimensional plane''. There are three facts that we would like to verify:
\begin{enumerate}[(i)]
\item $(P,\hat{\rho},\hat{\mu})$ is a PI space,
\item $(P,\hat{\rho})$ is quasisymmetric to the standard $\mathbb{R}^2$, and
\item $(P,\hat{\rho},\hat{\mu})$ has analytic dimension equal to $1$.
\end{enumerate}
The first property is immediate from a result of Cheeger about the persistence of the PI property under pointed measured Gromov--Hausdorff convergence: see \cite[Theorem 9.6]{Ch} or the exposition in \cite[Ch. 11]{HKST}. The second property, as noted in the proof of Theorem \ref{thm:main}, follows from known compactness properties for quasisymmetries, e.g., \cite[Lemma 2.4.7]{KL}.

It remains to show that $(P,\hat{\rho},\hat{\mu})$ has analytic dimension $1$. By Theorem \ref{thm:davidkleiner}, the analytic dimension is either $1$ or $2$; furthermore, if there is a $2$-dimensional chart $U$, then $\mu|_U$ is $2$-rectifiable and absolutely continuous to $\mathcal{H}^2$. Let us suppose that there is such a $2$-dimensional chart. Then, using the result of Kirchheim \cite{Kirchheim} mentioned also during the proof of Theorem \ref{thm:main}, there is subset of $P$ with positive $\hat{\mu}$-measure that is bi-Lipschitz to a subset of $\RR^2$. It follows from standard blowup arguments that there is a point $p\in P$ such that $(P, \hat{\rho})$ has a pointed Gromov--Hausdorff blowup $(\tilde{P}, \tilde{\rho})$ at $p$ that is globally bi-Lipschitz to $\RR^2$. 

By a result of Le Donne \cite[Proposition 3.1]{LD}, the space $(\tilde{P},\tilde{\rho})$ can be realized as a pointed Gromov--Hausdorff blowup of $([0,1]^2, \rho)$ at the point $y$ (if $y$ was chosen outside an exceptional set of $\mu$-measure zero). Thus, there is a sequence $(\lambda_n)\rightarrow 0$ such that
$$ ([0,1]^2, \lambda_n^{-1} \rho, y)$$
converges along a subsequence in the pointed Gromov--Hausdorff sense to the pointed metric space $(\tilde{P}, \tilde{\rho}, \tilde{y})$, for some $\tilde{y}\in \tilde{P}.$

We next wish to find a corresponding blowup of $(X,d_{SL})$ at $x$. Fix an infinite word $w$ with $F(w)=x$. Using Lemma \ref{lem:blowup}\eqref{item:blowupd}, there is a further subsequence such that the metrics
$$ d^n_{SL}(p,q) = \lambda_n^{-1}d_{SL}(F_{w(k_n)}(p), F_{w(k_n)}(q))$$
converge along this subsequence to a blowup metric $\tilde{d}_{SL}$ on $X$, where $k_n$ is some sequence in $\mathbb{N}$. 

By Lemmas \ref{lem:blowup} and \ref{lem:symblowup}, the space $(X,\tilde{d}_{SL})$ is symmetric, $Q$-Loewner, and quasisymmetric to $(X,d_{SL})$ and hence $(X,d_0)$.

Now consider the mappings
$$ \Pi_n = \Pi \circ F_{w(k_n)}: (X,d^n_{SL}) \rightarrow ([0,1]^2, \lambda_n^{-1}\rho).$$
As $\Pi\colon (X,d_{SL})\rightarrow ([0,1]^2, \rho)$ is $1$-Lipschitz and an isometry on each sheet, the same immediately follows for each $\Pi_n$. Thus, passing to a further subsequence, we may assume that the mappings $\Pi_n$ converge to a map $\tilde{\Pi}\colon (X,\tilde{d}_{SL})\rightarrow (\tilde{P},\tilde{\rho})$ that is an isometric embedding when restricted to each sheet of $(X,\tilde{d}_{SL})$. (See, e.g, \cite[Ch. 8]{DS} for an appropriate framework for the convergence of mappings needed here.)

It follows that each sheet of  $(X,\tilde{d}_{SL})$ is bi-Lipschitz equivalent to a subset of $\mathbb{R}^2$. Thus, the metric measure space $([0,1]^2,\tilde{\rho},\tilde{\mu}=\Pi_{*}(\mathcal{H}^Q_{\tilde{d}_{SL}}))$ derived from this space by Lemma \ref{lem:sheetmetric} is bi-Lipschitz equivalent to a subset of $\RR^2$. However, by Lemma \ref{lem:unrect}, this means that 
$$ \mathcal{H}^Q_{\tilde{d}_{SL}}(X) = \tilde{\mu}([0,1]^2) = 0,$$
which contradicts the Ahlfors $Q$-regularity of $(X,\tilde{d}_{SL})$.

The conclusion is therefore that the metric measure space $(P,\hat{\rho},\hat{\mu})$ has no $2$-dimensional chart, and therefore it is a PI space quasisymmetric to $\RR^2$ of analytic dimension $1$.

To complete the proof of Corollary \ref{cor:1dimplane}, it remains to argue that no Gromov--Hausdorff blowup of $P$ is bi-Lipschitz equivalent to $\RR^2$. However, we already showed that the existence of such a blowup leads to a contradiction in the first part of this argument, and hence the proof is complete. 
\end{proof}

\section*{Appendix: Lipschitz quotient maps and PI spaces}

The goal of this section is to prove Theorem \ref{thm:LQ}. As noted in the introduction, this result, and its proof, comes from an earlier unpublished work of Jeff Cheeger and the first author.  Theorem \ref{thm:LQ} is a general result about Lipschitz quotient maps and PI spaces that has nothing to do with the specific setting of our pillow space, hence its placement in this appendix. In fact, we prove a slightly more detailed result that immediately implies Theorem \ref{thm:LQ}.

\begin{definition}
A measure $\mu$ on a metric space $X$ is called \emph{$s$-homogeneous} (for some $s>0$) if there is a constant $C>0$ such that
$$ \frac{\mu(B(x,r))}{\mu(B(x,R))} \geq C^{-1} \left(\frac{r}{R}\right)^s$$
whenever $x\in X$ and $0<r\leq R$.
\end{definition}
Every doubling measure is $s$-homogeneous for some $s>0$; see \cite[p. 103-104]{He}. Recall also that if $(X,d,\mu)$ is a $p$-PI space, then it is also a $p'$-PI space for all $p'>p$ (by H\"older's inequality). Thus, given any PI space, there are finite values of $p$ and $s$ that make it $p$-PI and $s$-homogeneous with $p>s$. Therefore, the following result immediately implies Theorem \ref{thm:LQ}.

\begin{theorem}\label{thm:LQ2}
Let $(X,d,\mu)$ be $p$-PI space. Assume that $\mu$ is $s$-homogeneous with $p>\max(1,s)$. Let $f\colon (X,d)\rightarrow (Y,\rho)$ be a Lipschitz quotient map and $\nu = f_{*}\mu$. If $\nu$ is finite on some ball, then $(Y,\rho,\nu)$  is a $p$-PI space.
\end{theorem}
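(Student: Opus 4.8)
The plan is to verify the two defining properties of a $p$-PI space for $(Y,\rho,\nu)$ directly, using the Lipschitz quotient property of $f$ to transfer balls and functions back and forth between $X$ and $Y$. Throughout, let $L$ be the Lipschitz quotient constant, so that $B(f(x),L^{-1}r)\subseteq f(B(x,r))\subseteq B(f(x),Lr)$. First I would establish that $\nu$ is doubling on $Y$, and in fact comparable to $\mu$ on preimages of balls in a controlled way. The key point is that, since $f$ is open and $L$-co-Lipschitz, for $x\in X$ one has $f^{-1}(B(f(x),L^{-1}r))\subseteq \{z : \text{something}\}$ — more precisely, one shows $B(x,r)\subseteq f^{-1}(B(f(x),Lr))$ (Lipschitz direction, trivial) and, conversely, using co-Lipschitzness iteratively, every point of $f^{-1}(B(f(x),L^{-1}r))$ lying ``near'' $x$ can be reached; the cleanest route is to note $\nu(B(y,r)) = \mu(f^{-1}(B(y,r)))$ and sandwich $f^{-1}(B(y,r))$ between $B(x,L^{-1}r)$ and a set covered by boundedly many $\mu$-balls of radius $\approx r$ using the doubling property of $\mu$ together with the co-Lipschitz estimate $f(B(x,s))\supseteq B(f(x),L^{-1}s)$. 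Combining $\mu(B(x,L^{-1}r)) \leq \nu(B(y,r))$ with an upper bound $\nu(B(y,r)) \lesssim \mu(B(x,Cr))$ (obtained since $B(y,r)\subseteq f(B(x,Lr))$ forces $f^{-1}(B(y,r))$, intersected appropriately, to be controlled — here one uses that $f^{-1}(B(y,r))$ need not be bounded, so one must instead argue locally and use that $\nu$ is finite on the given ball to propagate finiteness and the doubling bound outward), the doubling of $\nu$ follows from that of $\mu$. The finiteness hypothesis is used precisely to rule out $\nu$ being identically $+\infty$ and to start this propagation.

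Next I would prove the Poincaré inequality for $\nu$ on $Y$. Given $u\in\mathrm{LIP}_0(Y)$ and a ball $B = B(y,r)\subseteq Y$, pick $x\in f^{-1}(y)$ and consider $\tilde u = u\circ f\in \mathrm{LIP}_0(X)$ and the ball $\tilde B = B(x, Lr)\subseteq X$, so that $f(\tilde B)\supseteq B$. By the change of variables $\nu = f_*\mu$, $\fint_B |u - u_B|\,d\nu \approx \fint_{f^{-1}(B)\cap \tilde B}|\tilde u - c|\,d\mu$ for an appropriate constant $c$, up to the measure-comparison from the first step. Apply the $p$-Poincaré inequality for $(X,d,\mu)$ to $\tilde u$ on $\tilde B$: the left side controls $\fint_B|u-u_B|\,d\nu$ (modulo doubling constants), and the right side is $\lesssim \mathrm{diam}(\tilde B)\big(\fint_{C\tilde B}(\mathrm{Lip}\,\tilde u)^p\,d\mu\big)^{1/p}$. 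Now use the chain-rule bound \eqref{eq:Lip}: since $f$ is $L$-Lipschitz, $\mathrm{Lip}[\tilde u](z) = \mathrm{Lip}[u\circ f](z) \leq L\cdot \mathrm{Lip}[u](f(z))$. Hence $\fint_{C\tilde B}(\mathrm{Lip}\,\tilde u)^p\,d\mu \lesssim \fint_{C\tilde B}(\mathrm{Lip}\,u)^p\circ f\,d\mu = $ (change of variables) $\lesssim \fint_{C'B}(\mathrm{Lip}\,u)^p\,d\nu$, where one again invokes the measure comparison between $\mu$ on $C\tilde B$ and $\nu$ on its $f$-image, and the fact that $f(C\tilde B)\subseteq C'B$ for a suitable dilated ball. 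Reassembling the chain of inequalities and tracking that $\mathrm{diam}(\tilde B)\approx \mathrm{diam}(B)$ gives the $p$-Poincaré inequality for $(Y,\rho,\nu)$.

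The main obstacle I anticipate is the measure-comparison step, i.e., controlling $\nu(B(y,r)) = \mu(f^{-1}(B(y,r)))$ from above, because $f^{-1}(B(y,r))$ can be a large (even unbounded) set and need not be contained in any single ball of $X$ of radius $\approx r$. Co-Lipschitzness only gives a \emph{lower} bound $f(B(x,r))\supseteq B(f(x),L^{-1}r)$, so the natural upper bound $\mu(f^{-1}(B))\leq \mu(B(x,Cr))$ is false in general without further argument. This is exactly where $s$-homogeneity of $\mu$ and the hypothesis $p>\max(1,s)$ enter, and why the sharper Theorem \ref{thm:LQ2} is stated: one must show that, although $f^{-1}(B(y,r))$ may spread out, its $\mu$-measure is still $\lesssim r^s \cdot(\text{local density})$, by covering $f^{-1}(B(y,r))$ by balls $B(x_i, L^{-1}r)$ with the $x_i\in f^{-1}(y')$ for $y'\in B(y,r)$ and a packing/covering argument that invokes the $s$-homogeneous growth bound to sum the contributions. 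Equivalently, one shows $\nu$ is itself $s$-homogeneous and doubling, and this is the technical heart; once it is in place, the transfer of the Poincaré inequality is the routine computation sketched above. The condition $p > s$ guarantees that the homogeneity exponent is compatible with the Poincaré exponent, so that the self-improved/transferred inequality does not degenerate.
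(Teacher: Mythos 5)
Your proposal correctly identifies one of the main difficulties---that $f^{-1}(B(y,r))$ need not sit in a single ball, so proving $\nu$ is doubling requires a covering and packing argument---but it misplaces both the difficulty and the role of the hypothesis $p>\max(1,s)$, and the central step in the Poincar\'e transfer does not work as written.

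Concretely, you pick a single $x\in f^{-1}(y)$, set $\tilde B = B(x,Lr)$, and claim $\fint_B|u-u_B|\,d\nu \approx \fint_{f^{-1}(B)\cap\tilde B}|\tilde u - c|\,d\mu$. This is false: pushing forward gives $\int_B|u-c|\,d\nu = \int_{f^{-1}(B)}|\tilde u - c|\,d\mu$, and the domain $f^{-1}(B)$ is in general much larger than $f^{-1}(B)\cap\tilde B$; no measure comparison closes this gap, because the left side genuinely sees $\tilde u$ over \emph{all} of $f^{-1}(B)$, not just the part near your chosen $x$. You acknowledge that $f^{-1}(B)$ spreads out, but you treat it as only a problem for proving $\nu$ is doubling; it is equally a problem for the Poincar\'e transfer.

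The paper's resolution is to cover $f^{-1}(B)$ by countably many balls $B_i$ of radius $\approx r$, each containing a preimage $x_i$ of the center $y_0$ and satisfying $f(B_i)\supseteq B$, with bounded overlap of the dilates $5CB_i$ (Lemma \ref{lem:cover}). Then $\int_{f^{-1}(B)}|g\circ f - g(y_0)|\,d\mu$ is decomposed as $\sum_i\int_{B_i}|g(f(x))-g(f(x_i))|\,d\mu$. The crucial point you are missing is that bounding $|g(f(x))-g(f(x_i))|$ for generic $x\in B_i$ requires a \emph{pointwise} (supremum) Poincar\'e estimate rather than the usual $L^1$ one, because the constant you would naturally subtract, $(g\circ f)_{B_i}$, is not equal to $g(y_0)$ and there is no other way to fix the reference value at the $f$-preimage of $y_0$. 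This is exactly where $p>\max(1,s)$ enters, via the Haj\l asz--Koskela embedding (Theorem \ref{thm:HK}): for $p>s$ one has $\sup_{B}|u-u_B|\lesssim r\bigl(\fint_{5CB}\Lip[u]^p\,d\mu\bigr)^{1/p}$. Your proposal instead attributes $p>s$ to the measure-homogeneity step, but the doubling of $\nu$ (Lemma \ref{lem:doubling}) uses only the Lipschitz quotient structure and the doubling of $\mu$; $p>s$ plays no role there. Finally, reassembling the sum requires Jensen's inequality (concavity of $t\mapsto t^{1/p}$) together with the bounded overlap of $\{5CB_i\}$, a bookkeeping step absent from your sketch.

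So: the broad outline (prove doubling of $\nu$, then transfer the Poincar\'e inequality via $\nu=f_*\mu$ and the chain rule for $\Lip$) is right, but the single-ball reduction fails, the multi-ball covering with the sup-norm Poincar\'e estimate is the actual mechanism, and the role of $p>s$ is misidentified.
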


We now begin working towards the proof of Theorem \ref{thm:LQ}. Our standing assumptions on $f:X\rightarrow Y$ are as follows.
\begin{enumerate}
\item $(X,d,\mu)$ is $p$-PI with $p>s$, $s$ being the homogeneity exponent associated to the doubling measure $\mu$. 
\item $f$ is a Lipschitz quotient map with constant $L$.
\item $\nu=f_* \mu$ is finite on some ball.
\end{enumerate}

\begin{lemma}\label{lem:cover}
Under these assumptions, the following holds for some constant $c\geq 2$ depending only on $L$: For each ball $B=B(y,r)\subset Y$, we have that $f^{-1}(B)$ is contained in a countable union of balls $B_i$ such that
\begin{enumerate}
\item $\text{radius}(B_i) = cr$ for each $i$,
\item $B\subseteq f(B_i)$ for each $i$, and
\item $\frac{1}{c}B_i$ are pairwise disjoint.
\item For each $\lambda>0$, there is a constant $N$ such that the collection $\{\lambda B_i\}$ is ``$N$-overlapping'': each point of $X$ is contained in at most $N$ of the balls $\lambda B_i$. The constant $N$ depends only on $\lambda$, $L$, and the doubling constant of $X$.
\end{enumerate}
\end{lemma}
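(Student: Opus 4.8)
The proof is a standard Vitali-type covering argument adapted to the Lipschitz quotient setting. Fix a ball $B=B(y,r)\subset Y$. The key point is that, by the co-Lipschitz condition $B(f(x),L^{-1}\rho_0)\subseteq f(B(x,\rho_0))$, every point of $X$ whose image is near $y$ lies within controlled distance of a point mapping exactly to $y$; more precisely, if $x\in f^{-1}(B)$ then there is $x'\in f^{-1}(\{y\})$ with $d(x,x')\leq Lr$ (solve $\rho(f(x),y)<r$ using the co-Lipschitz inclusion applied at $x$, or rather apply it at $x'$ after finding $x'$). So the plan is: first show that $f^{-1}(B)$ is covered by the balls $\{B(x',Lr): x'\in f^{-1}(\{y\})\}$. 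Each such ball $B(x',Lr)$ satisfies $B=B(y,r)\subseteq f(B(x',Lr))$ by the co-Lipschitz inclusion (taking the radius a bit larger, say $cr$ with $c=2L$, to be safe and to leave room), giving property (2). Then apply the basic $5r$-covering lemma (in the form valid in doubling metric spaces, e.g.\ \cite[Ch.~1]{He}) to the collection $\{B(x', \tfrac{c}{5}r) : x'\in f^{-1}(\{y\})\}$ — or more cleanly, to $\{B(x',r): x'\in f^{-1}(\{y\})\}$ — to extract a subcollection $\{B_i = B(x_i, cr)\}$ with the $\tfrac1c B_i = B(x_i,r)$ pairwise disjoint (property (3)) while the dilated balls still cover $f^{-1}(B)$ (property (1) follows by construction, fixing $c\geq 2$ depending only on $L$).

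For property (4), the $N$-overlap bound, the argument is the usual disjointification-plus-doubling estimate: suppose a point $z\in X$ lies in $\lambda B_i = B(x_i,\lambda c r)$ for indices $i$ in some set $J$. Then all the disjoint balls $\tfrac1c B_i = B(x_i,r)$, $i\in J$, are contained in $B(z, (\lambda c +1) r)$, and each contains $B(z, \,?\,)$ — no, rather each $B(x_i,r)$ has $\mu$-measure comparable (via doubling, since $d(x_i,z)\leq \lambda c r$ and the big ball $B(z,(\lambda c+1)r)$ has controlled doubling-many radius-$r$ sub-balls) to $\mu(B(z,(\lambda c+1)r))$. Summing the disjointness $\sum_{i\in J}\mu(B(x_i,r)) \leq \mu(B(z,(\lambda c+1)r))$ against the lower bound $\mu(B(x_i,r))\gtrsim \mu(B(z,(\lambda c+1) r))$ (valid because $B(z,(\lambda c+1) r)\subseteq B(x_i, (2\lambda c +1) r)$ and doubling compares $\mu$ on $B(x_i,r)$ and $B(x_i,(2\lambda c+1)r)$ with a constant depending only on $\lambda$, $L$, and the doubling constant) yields $|J|\leq N$ with $N=N(\lambda,L,C_\mu)$.

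\textbf{Main obstacle.} There is essentially no deep obstacle here — the content is bookkeeping. The one point requiring a little care is making sure the cover of $f^{-1}(B)$ can be taken \emph{countable}; this follows because $X$ is doubling (being a complete PI space), hence separable and Lindelöf, so the cover by radius-$Lr$ balls centered on $f^{-1}(\{y\})$ admits a countable subcover, after which the $5r$-covering lemma applies in its countable form. A second minor point is keeping the constant $c$ uniform: one should pick $c=5L$ (say), use it consistently as the dilation factor relating $B_i$ to $\tfrac1c B_i$, and check that each required inclusion ($B\subseteq f(B_i)$, covering of $f^{-1}(B)$, disjointness) holds with this single choice; all of this depends only on $L$ as claimed.
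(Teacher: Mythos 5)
Your proposal is essentially correct and follows the same structure as the paper's proof: cover $f^{-1}(B)$ by balls of uniform radius $O(Lr)$, extract a disjoint subcollection via the $5r$-covering lemma, dilate, and verify property (4) by the standard packing/doubling estimate. The one variation is that you center the initial covering balls on the exact fiber $f^{-1}(\{y\})$ (using the co-Lipschitz inclusion to show every point of $f^{-1}(B)$ lies within $Lr$ of that fiber), whereas the paper centers them on arbitrary points $x_i\in f^{-1}(B)$ and then applies the co-Lipschitz inclusion at $f(x_i)$ with a slightly larger radius $2r$ to absorb the offset $\rho(f(x_i),y)<r$; both work, yours giving a marginally smaller dilation constant. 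Do tidy the hedging about which family to apply the covering lemma to: applying it to $\{B(x',Lr):x'\in f^{-1}(\{y\})\}$ and taking $c=5L$ is cleanest, since then the disjoint balls $B(x_i,Lr)$ contain the radius-$r$ balls $\tfrac1c B_i$ (as $L\geq 1$), the $5$-dilations $B_i=B(x_i,5Lr)$ cover $f^{-1}(B)$, and $B\subseteq f(B(x_i,Lr))\subseteq f(B_i)$ all hold with a single constant depending only on $L$.
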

\begin{proof}
Fix any ball $B=B(y,r)\subseteq Y$. Let $U=f^{-1}(B)\subset X$. Since $X$ is doubling and hence separable, we can cover $U$ by countably many balls $D_i = B(x_i,r)$, with $x_i\in U$. By a basic covering lemma  \cite[Theorem 1.4]{He}, we can extract a disjoint subcollection of $\{D_i\}$ (which we still call $\{D_i\}$) such that $5D_i$ cover $U$. Now, for each $i$, since $x_i\in U$ we have that $f(x_i)\in B$. Therefore,
$$ B \subseteq B(f(x_i), 2r) \subseteq f(B(x_i, 2Lr)) = f(2LD_i)$$
for each $i$. Therefore, if we let $c=\max(2L,5)$ and $B_i = cD_i$, then $B_i$ cover $U$ and $f(B_i)$ contains $B$ for each $i$.

The final condition follows from the doubling property of $X$. Indeed, if $z\in X$ were contained in $n$ such balls $\lambda B_i$, then the centers of those balls would form an $(r/c)$-separated set (i.e. their pairwise distances are at least $r/c$) in $B(z,c\lambda r)$, and hence the number $N$ of such balls is controlled. 
\end{proof}

\begin{lemma}\label{lem:doubling}
Under these assumptions, $\nu=f_*\mu$ is doubling.
\end{lemma}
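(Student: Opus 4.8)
The plan is to prove the scale-invariant inequality $\nu(B(y,2r))\leq C\,\nu(B(y,r))$ for every $y\in Y$ and every $r>0$, read as an inequality in $[0,\infty]$ and with $C$ depending only on $L$ and the doubling data (the inequality being trivial when $f^{-1}(B(y,2r))$ is empty). This is precisely the doubling condition; moreover, once it is known, the standing hypothesis that $\nu$ is finite on a single ball propagates — by iterating the inequality outward from that ball and using monotonicity of measures — to finiteness of $\nu$ on every ball, so that $\nu$ is a genuine (Radon) doubling measure. The naive route, bounding $\nu(B(y,2r))=\mu(f^{-1}(B(y,2r)))$ by $\mu$ of a single ball of radius $O(r)$ in $X$ and then applying the doubling of $\mu$, is unavailable: $f^{-1}$ of a ball can be unbounded and geometrically spread out. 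This is exactly the obstacle that Lemma~\ref{lem:cover} is tailored to handle, and it is where the one genuine idea of the argument sits.

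Concretely, I would apply Lemma~\ref{lem:cover} to the ball $B(y,2r)$, obtaining a countable cover of $f^{-1}(B(y,2r))$ by balls $B_i=B(x_i,2cr)$ (with $x_i\in f^{-1}(B(y,2r))$) satisfying $B(y,2r)\subseteq f(B_i)$ for every $i$, and such that the dilated family $\{2B_i\}$ is $N$-overlapping with $N$ depending only on $L$ and the doubling constant of $X$. The key local fact is that each $B_i$ carries, near $y$, a controlled amount of $\mu$-mass: since $y\in B(y,2r)\subseteq f(B_i)$, there is a point $x_i'\in B_i$ with $f(x_i')=y$, and because $f$ is $L$-Lipschitz we get $B(x_i',r/(2L))\subseteq f^{-1}(B(y,r))$; one also checks directly that $B(x_i',r/(2L))\subseteq 2B_i$. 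Comparing the concentric balls $B(x_i',r/(2L))$ and $B(x_i',4cr)\supseteq B_i$ through finitely many applications of the doubling property of $\mu$ yields
$$\mu(B_i)\ \lesssim\ \mu\bigl(B(x_i',r/(2L))\bigr)\ \leq\ \mu\bigl(f^{-1}(B(y,r))\cap 2B_i\bigr),$$
with implied constant depending only on $L$ and the doubling constant of $\mu$.

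Summing this over $i$ and invoking the bounded overlap of $\{2B_i\}$ then closes the argument:
\begin{align*}
\nu(B(y,2r))=\mu\bigl(f^{-1}(B(y,2r))\bigr) &\leq \sum_i\mu(B_i)\ \lesssim\ \sum_i\mu\bigl(f^{-1}(B(y,r))\cap 2B_i\bigr)\\
&=\int_{f^{-1}(B(y,r))}\Bigl(\sum_i\mathbf{1}_{2B_i}\Bigr)\,d\mu\ \leq\ N\,\nu(B(y,r)).
\end{align*}
I expect the only genuine subtleties to be bookkeeping ones: checking that the constant $c$ produced by Lemma~\ref{lem:cover} is compatible with the inclusion $B(x_i',r/(2L))\subseteq 2B_i$ (so that the overlap bound of Lemma~\ref{lem:cover}(4) applies with the fixed dilation $2$), and verifying that every constant appearing is uniform in $y$ and $r$ and depends only on $L$ and the doubling data. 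Note that neither the Poincar\'e inequality nor the homogeneity exponent $s$ is used at this stage; those enter only in the remaining steps of the proof of Theorem~\ref{thm:LQ2}.
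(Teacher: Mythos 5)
Your argument is correct and uses the same key ingredients as the paper's proof (Lemma~\ref{lem:cover}, the bounded-overlap property, the co-Lipschitz direction of the quotient map to plant a ball of comparable $\mu$-mass inside each $B_i$ that lands in the smaller target ball, and the doubling of $\mu$), so it is essentially the same approach. The only organizational difference is that you apply Lemma~\ref{lem:cover} to the larger ball $B(y,2r)$ and sum directly, whereas the paper covers $f^{-1}(B(y,r))$ and then separately observes that $f^{-1}(B(y,2r))$ is contained in the $K$-dilated cover; your variant saves that one extra dilation step but is otherwise the same proof.
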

\begin{proof}
Fix any ball $B=B(y,r)$ in $Y$. Let $B_i$ cover $U=f^{-1}(B)$ as in Lemma \ref{lem:cover}. Observe that for each $i$, there is a sub-ball $B'_i\subseteq 2B_i$ of radius $\frac1L r$ that maps into $B$. (Simply center $B'_i$ at a preimage of $y$ in $B_i$.) Furthermore, since $2B_i$ are $N$-overlapping (for some constant $N$ depending only on $L$ and the data of $X$) and $B'_i \subset 2B_i$, we see that $B'_i$ are $N$-overlapping. From the size and location of $B'_i$ it follows that $\mu(B'_i) \gtrsim \mu(B_i)$, with implied constant depending only on $L$ and the data of $X$.

In addition, if we set $K = \frac{c+2L}{c}$, then the Lipschitz quotient property implies that  $f^{-1}(2B) \supseteq \cup_i (KB_i)$. (A point of $f^{-1}(2B)$ must be at most $2Lr$ from a point of $U$ and hence at most $Kcr$ from a center of some $B_i$.)

Thus, using the doubling property of $\mu$ and the $N$-overlapping property of $B'_i$, we get that
$$ \nu(2B) \leq \sum_i \mu(KB_i) \lesssim \sum_i \mu(B_i) \lesssim \sum_i \mu(B'_i) \lesssim \mu(f^{-1}(B)) =  \nu(B).$$
\end{proof}

We now note that, since $\nu$ is finite on some ball, doubling and non-zero, it is finite and non-zero on every ball. Next, we move on to proving the $p$-Poincar\'e inequality for $Y$. We will use the ``supercritical'' assumption $p>s$ here via the following result, which is contained in a theorem of Haj\l asz-Koskela:
\begin{theorem}[\cite{HajlaszKoskela}, Theorem 5.1]\label{thm:HK}
Let $(X,\mu)$ be a $s$-homogeneous $p$-PI space with PI constant $C>0$ and $p>s$. Let $u$ be a Lipschitz function on $X$ and $B=B(x,r)$ a ball in $X$. Then
$$ \sup_B |u-u_B| \leq Cr\left(\fint_{5CB} \Lip[u])^p d\mu\right)^{1/p}.$$
As an immediate consequence,
$$ \sup_{z,w\in B}  |u(z)-u(w)| \leq 2Cr\left(\fint_{5CB} \Lip[u]^p d\mu\right)^{1/p}.$$
\end{theorem}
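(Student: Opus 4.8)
This is the Morrey--Sobolev embedding in the supercritical regime $p>s$, and the plan is to establish it by the classical telescoping argument along a geometric chain of balls, the role of the hypothesis $p>s$ being exactly to guarantee convergence of the resulting series. (If $X$ is unbounded one first reduces, by multiplying $u-u(x)$ by a Lipschitz cutoff equal to $1$ on $5CB$ and supported slightly beyond it, to the situation where the Poincar\'e inequality may be applied directly to $u$ on balls contained in $5CB$; this changes only the constants, and we do not dwell on it. We also assume, as we may, that the PI constant satisfies $C\ge 1$.)

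First I would fix a point $x_0\in B=B(x,r)$ and consider the balls $B_j=B(x_0,2^{1-j}r)$ for $j\ge 0$, so that $B\subseteq B_0=B(x_0,2r)$ and $B_{j+1}\subseteq B_j$ with $\mu(B_j)\lesssim\mu(B_{j+1})$ by doubling. Since $u$ is Lipschitz it is continuous, hence $u_{B_j}\to u(x_0)$, and the telescoping inequality gives $|u(x_0)-u_{B_0}|\le\sum_{j\ge 0}|u_{B_{j+1}}-u_{B_j}|$. I would then estimate a single term of this sum in three moves: (i) by doubling of $\mu$, $|u_{B_{j+1}}-u_{B_j}|\le\tfrac{\mu(B_j)}{\mu(B_{j+1})}\fint_{B_j}|u-u_{B_j}|\,d\mu\lesssim\fint_{B_j}|u-u_{B_j}|\,d\mu$; (ii) by the $p$-Poincar\'e inequality on $B_j$, this is $\lesssim 2^{-j}r\big(\fint_{CB_j}(\Lip[u])^p\,d\mu\big)^{1/p}$; (iii) by $s$-homogeneity of $\mu$, $\mu(CB_0)/\mu(CB_j)\lesssim 2^{js}$, so $\fint_{CB_j}(\Lip[u])^p\,d\mu\lesssim 2^{js}\fint_{CB_0}(\Lip[u])^p\,d\mu$. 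Combining the three, the $j$-th term is $\lesssim r\,2^{-j(1-s/p)}\big(\fint_{CB_0}(\Lip[u])^p\,d\mu\big)^{1/p}$.

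Here is where the hypothesis $p>s$ enters: $1-s/p>0$, so $\sum_{j\ge 0}2^{-j(1-s/p)}<\infty$, and summing yields $|u(x_0)-u_{B_0}|\lesssim r\big(\fint_{CB_0}(\Lip[u])^p\,d\mu\big)^{1/p}$. A single further application of the Poincar\'e inequality on $B_0$, together with $\mu(B_0)\lesssim\mu(B)$ from doubling, bounds $|u_B-u_{B_0}|\le\tfrac{\mu(B_0)}{\mu(B)}\fint_{B_0}|u-u_{B_0}|\,d\mu$ by the same quantity, so $|u(x_0)-u_B|\lesssim r\big(\fint_{CB_0}(\Lip[u])^p\,d\mu\big)^{1/p}$. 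Since $x_0\in B$ and $C\ge 1$ we have $CB_0=B(x_0,2Cr)\subseteq B(x,5Cr)=5CB$, and hence $\fint_{CB_0}(\Lip[u])^p\,d\mu\lesssim\fint_{5CB}(\Lip[u])^p\,d\mu$ by doubling; taking the supremum over $x_0\in B$ gives the first inequality, with a constant depending only on $p$, $s$, and the PI, doubling, and homogeneity constants of $X$. The displayed ``consequence'' is then immediate: for $z,w\in B$, $|u(z)-u(w)|\le|u(z)-u_B|+|u(w)-u_B|\le 2\sup_B|u-u_B|$.

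The main --- and essentially only --- place one can slip is the bookkeeping of which dilate of which ball one lands in, so that all the balls $CB_j$ and $CB_0$ stay inside $5CB$ and the stated numerology is exactly met; the analytic heart, convergence of the geometric series, is entirely driven by $p>s$ and poses no difficulty. Since the statement is quoted verbatim as \cite[Theorem 5.1]{HajlaszKoskela}, one may alternatively simply invoke that reference; the sketch above is included for completeness.
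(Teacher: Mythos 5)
Your argument is correct, but note that the paper does not actually prove this statement at all: it is imported verbatim from \cite[Theorem 5.1]{HajlaszKoskela}, and the only ``proof'' content in the paper is the observation that the second display follows from the first by the triangle inequality (which you also make). What you have written is the standard telescoping/chaining proof of the supercritical Morrey-type embedding, and it holds up under scrutiny: the choice $B_j=B(x_0,2^{1-j}r)$ with $B\subseteq B_0$, the three-step estimate (doubling to pass from $B_{j+1}$ to $B_j$, the $p$-Poincar\'e inequality on $B_j$, and $s$-homogeneity to compare $\fint_{CB_j}$ with $\fint_{CB_0}$) gives the $j$-th term $\lesssim r\,2^{-j(1-s/p)}\bigl(\fint_{CB_0}\Lip[u]^p\,d\mu\bigr)^{1/p}$, and $p>s$ is used exactly where it must be, to sum the geometric series; the final inclusions $CB_j\subseteq CB_0\subseteq 5CB$ and $5CB\subseteq 3\,CB_0$ justify the replacement of $\fint_{CB_0}$ by $\fint_{5CB}$ up to a doubling constant. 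You also correctly flag the one genuine technical wrinkle, namely that the paper's Poincar\'e inequality is stated for $u\in\LIP_0(X)$ while the theorem concerns arbitrary Lipschitz $u$; your cutoff reduction handles this since all balls entering the chain lie in $5CB$, where the pointwise Lipschitz constant of the truncated function agrees with $\Lip[u]$. The only cosmetic discrepancy is that your constant depends on $p$, $s$, and the doubling, homogeneity and PI constants rather than being literally the PI constant $C$ appearing in the quoted display; this is harmless, as the statement is used in the proof of Theorem \ref{thm:LQ2} only up to multiplicative constants depending on the data. In short: the paper buys brevity by citation, while your argument buys self-containedness; both are fine, and your sketch would serve as a correct proof if one wished to include it.
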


\begin{proof}[Proof of Theorem \ref{thm:LQ}]
Throughout this proof, we use the notation $\lesssim$ to hide constants that depend only on $L$ and the data of $X$.

By Lemma \ref{lem:doubling}, we know that $\nu=f_*\mu$ is doubling on $Y$. Fix a ball $B=B(y_0,r)$ in $Y$ and a function $g\in \text{LIP}_0(Y)$. Cover $f^{-1}(B)$ by balls $B_i=B(z_i,cr)$ as in Lemma \ref{lem:cover}, where $c$ depends only on $L$. There is a constant $N$, depending only on $L$ and the data of $X$, such that the collection of balls $\{5CB_i\}$ are $N$-overlapping, where $C$ is the Poincar\'e inequality constant of $X$.

We first observe a certain comparability of measures, which follows from the doubling property of $\nu$, the $N$-overlapping properties of $\{5CB_i\}$, and the fact that $$f^{-1}(B) \subset \bigcup B_i \subset \bigcup 5CB_i \subset f^{-1}(10CcLB) = f^{-1}(C'B).$$
From these, we get that
\begin{equation}\label{eq:measurebounds}
\nu(B) \leq \sum_i \mu(B_i) \leq \sum_i \mu(5CB_i) \leq N \mu\left(\bigcup_i CB_i\right) \leq N\nu(C'B) \lesssim \nu(B).
\end{equation}

We now proceed through a chain of inequalities. Following the chain, we justify each line in the calculation.
\begin{align}
\fint_B |g - g_B| d\nu &\leq 2\fint_B |g - g(y_0)|d\nu\label{triangle}\\ 
&=\frac{1}{\nu(B)} \int_{f^{-1}(B)} |g(f(x)) - g(y_0) | d\mu(x)\label{pushforward}\\
&\leq \frac{1}{\nu(B)} \sum_i \int_{B_i} |g(f(x)) - g(y_0) | d\mu(x)\label{covering}\\
&= \frac{1}{\nu(B)} \sum_i \int_{B_i} |g(f(x)) - g(f(x_i))| d\mu(x) \hspace{0.2in}\text{ where } x_i\in B_i, f(x_i) = y_0 \label{localsurjection}\\
&\lesssim \frac{1}{\nu(B)} \sum_i \mu(B_i)(cr)\left(\fint_{5C B_i} \Lip[g\circ f](x)^p d\mu(x)\right)^{1/p} \label{PIuse}\\
&\lesssim \frac{1}{\nu(B)} \sum_i \mu(B_i)(cr)\left(\fint_{5C B_i} \Lip[g](f(x))^p d\mu(x)\right)^{1/p} \label{Lip}\\
&\lesssim \frac{1}{\sum_i \mu(B_i)} \sum_i \mu(B_i)(cr)\left(\fint_{5C B_i} \Lip[g](f(x))^p d\mu(x)\right)^{1/p} \label{nudoubling1}\\
&\lesssim r  \left(\frac{1}{\sum_i \mu(B_i)} \sum_i \mu(B_i)\frac{1}{\mu(5CB_i)}\int_{5CB_i} \Lip[g](f(x))^p d\mu(x)\right)^{1/p} \label{Jensen}\\
&\lesssim r \left(\frac{1}{\sum_i \mu(B_i)}\sum_i  \int_{5CB_i} \Lip[g](f(x))^p d\mu(x)\right)^{1/p} \label{mudoubling}\\
&\lesssim r  \left(\frac{1}{\sum_i \mu(B_i)} \int_{f^{-1}(C'B)} \Lip[g](f(x))^p d\mu(x)\right)^{1/p}\label{bigpreimage}\\
&\lesssim r \left(\frac{1}{\nu(C'B)} \int_{f^{-1}(C'B)} \Lip[g](f(x))^p d\mu(x)\right)^{1/p}\label{nudoubling2}\\
&= r  \left(\frac{1}{\nu(C'B)} \int_{C'B} \Lip[g](y)^p d\nu(y)\right)^{1/p}\label{lastline}
\end{align}
Line \eqref{triangle} is a simple consequence of the triangle inequality.\\
Line \eqref{pushforward} follows from $\nu = f_*\mu$.\\ 
Line \eqref{covering} holds because $\bigcup B_i$ contains $f^{-1}(B)$.\\ 
Line \eqref{localsurjection} follows from the fact that $B\subseteq f(B_i)$ for each $i$. \\ Line \eqref{PIuse} follows from Theorem \ref{thm:HK} and the fact that $B_i$ has radius $cr$.\\ 
Line \eqref{Lip} follows from \eqref{eq:Lip} and the fact that $f$ is $L$-Lipschitz.\\ 
Line \eqref{nudoubling1} follows from \eqref{eq:measurebounds}.\\
Line \eqref{Jensen} follows from Jensen's inequality, i.e. the concavity of the function $t\mapsto t^{1/p}$, in the form that $\frac{\sum a_i (b_i)^{1/p}}{\sum_i a_i} \leq \left(\frac{\sum a_i b_i}{\sum_i a_i}\right)^{1/p} $.\\ 
Line \eqref{mudoubling} holds simply because $\mu(B_i)\leq \mu(CB_i)$.\\ 
Line \eqref{bigpreimage} follows from the fact that $\{5CB_i\}$ are $N$-overlapping and contained in $f^{-1}(C'B)$, as noted before \eqref{eq:measurebounds}.\\ 
Line \eqref{nudoubling2} again follows from \eqref{eq:measurebounds}.\\Finally, line \eqref{lastline} follows again  from the fact that $\nu=f_*\mu$.

\end{proof}

\begin{remark}
A very simple-minded example illustrating Theorem \ref{thm:LQ} is the following: Let $X=[0,1]^2$ with the usual metric $|\cdot|$ and measure $\mathcal{L}^2$, which is of course a PI space. Let $Y=[0,1]$ with the standard metric. Let $f:X\rightarrow Y$ be the projection to the $x$-axis. Then $f$ is a Lipschitz quotient map. Of course in this case $(Y,f_{*}\mathcal{L}^2)$ is simply $[0,1]$ with Lebesgue measure, which is again a PI space.

This example indicates that such mappings can decrease the analytic dimension (in this case from $2$ to $1$). It is not very difficult to show that Lipschitz quotient mappings cannot \textit{increase} the analytic dimension.
\end{remark}

\bibliographystyle{acm}
\bibliography{clp}

\end{document}